\newtheorem{theorem}{Theorem}[section]
\newtheorem{lemma}[theorem]{Lemma}
\newtheorem{cor}[theorem]{Corollary}
\newtheorem{conjecture}[theorem]{Conjecture}
\numberwithin{equation}{section}
\newtheorem{example}[theorem]{Example}
\newtheorem{definition}[theorem]{Definition}
\begin{document}

\title{Symplectic Classes on Elliptic Surfaces with positive Euler Number}
\author{Josef G. Dorfmeister}

\address{Department of Mathematics\\ North Dakota State University\\ Fargo, ND 58102}
\email{josef.dorfmeister@ndsu.edu}
\author{Tian-Jun Li}

\address{School  of Mathematics\\  University of Minnesota\\ Minneapolis, MN 55455}
\email{tjli@math.umn.edu}

\begin{abstract}
A key question for $4$-manifolds $M$ admitting symplectic structures is to determine which cohomology classes $\alpha\in H^2(M,\mathbb R)$ admit a symplectic representative.  The collection of all such classes, the symplectic cone $\mathcal C_M$, is a basic smooth invariant of $M$.  This paper describes the symplectic cone for elliptic surfaces with positive Euler number.
\end{abstract}

\maketitle

\tableofcontents

\section{Introduction}

Let $M$ be a smooth oriented $4$-manifold admitting symplectic structures.  The symplectic cone $\mathcal C_M\subset H^2(M,\mathbb R)$ is the collection of all classes $\alpha$ represented by an orientation compatible symplectic form $\omega$.  This cone has been determined in a number of cases, see \cite{H} for an overview.

The compatibility condition ensures that $\mathcal C_M\subset \mathcal P_M$, the set of classes with positive square.  A further restriction arises from Seiberg-Witten basic classes \cite{T}.  Both exceptional classes and symplectic canonical classes give rise to SW basic classes.  More precisely, let $\mathcal E$ denote the set of classes represented by smoothly embedded spheres of self-intersection $-1$.  Then it follows that for any $E\in\mathcal E$ and $\alpha\in\mathcal C_M$, $\alpha\cdot E\ne 0$.  Similarly, if $K=-c_1(M,\omega)$, then if $K\ne 0$, $K\cdot \alpha\ne 0$.

If $M$ is an elliptic surface, it is known that in certain cases the only constraints on a class to lie in the symplectic cone $\mathcal C_M$ are given by these three:  For those manifolds with $b^+(M)=1$, the results can be found in \cite{TJLL} and \cite{DL}.  For relatively minimal $T^2$-bundles over surfaces, the results can be found in \cite{Ge}, \cite{FV1}, \cite{FV2}, \cite{KN}, \cite{Wa}.   For relatively minimal $K3$-surfaces, the result is in \cite{TJL1}.  Non-minimal results are known for $T^4$, see \cite{EV} and \cite{LMS} (See Section \ref{s:t4non} for an alternative proof).   

This note extends the result to a large class of elliptic surfaces, the following is the main result, see Section \ref{s:cone}:

\begin{theorem}\label{t:main}
Let $M$ be an elliptic surface with $\chi(M)>0$, $p_g\geq 1$  and of Kodaira dimension 1.  Let $F_g$ be  a generic fiber with $F=[F_g]\in H_2(M,\mathbb Z)$.  
 Then 
\[
\mathcal C_{M}=\{\alpha\in\mathcal P_M\;|\; \alpha\cdot F\ne 0,\;\;\alpha\cdot E\ne 0\;\forall E\in\mathcal E\}.
\]

\end{theorem}

The same techniques make it possible to also prove a result in the non-minimal case for $K3$-surfaces (see also Lemma \ref{l:k3}).

\begin{lemma} Let $M=E(2)$.  Then 
\[
\mathcal C_{M\#l\overline{CP^2}}=\{\alpha\in\mathcal P_{M\#l\overline{CP^2}}\;|\;\alpha\cdot E_i\ne 0\;\forall i\in\{1,..,l\}\}
\]
\end{lemma}

For $V\subset M$ an oriented smooth submanifold, the relative symplectic cone $\mathcal C^V_M\subset \mathcal C_M$ consists of all classes such that a symplectic representative $\omega$ restricts to an orientation compatible symplectic form on $V$.  It follows, that for $\alpha$ to lie in the relative symplectic cone, $\alpha\cdot [V]>0$ must hold.  Hence $\mathcal C^V_M$ is always contained in the cone $\mathcal P_M^{[V]}\subset\mathcal P_M$ of classes which evaluate positively on $[V]$.  

It is an interesting question to consider how large the inclusion $\mathcal C_M\subset \mathcal P_M$ or $\mathcal C_M^V\subset \mathcal P_M^{[V]}$ is.  This is related to the conjecture below.

If $M$ underlies a minimal K\"ahler surface, then all symplectic forms have the same canonical class up to sign (\cite{FM}, \cite{W}).  Denote this class $K$.   If $b^+>1$, then Taubes \cite{T} has shown that the Poincar\'e dual to the canonical class $K$ is represented
by an embedded, symplectic curve.  In particular, this implies that for any symplectic class $\alpha$, $\alpha\cdot K\ne 0$.   Thus it follows that
\[
\mathcal C_M\subset \mathcal P_M^K\cup\mathcal P_M^{-K}.
\]
This leads to the following conjectures:
\begin{conjecture}\label{co:1}(\cite{TJL1}, Question 4.9)  If $M$ underlies a minimal K\"ahler surface with $b^+>1$, then
\[
\mathcal P_M^K\cup\mathcal P_M^{-K} \subset \mathcal C_M.
\] 
\end{conjecture}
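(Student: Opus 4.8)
The plan is to reduce the conjecture to a single inclusion and then dispose of it Kodaira dimension by Kodaira dimension. Since $-\omega$ is an orientation-compatible symplectic form whenever $\omega$ is, the cone is symmetric, $\mathcal{C}_M=-\mathcal{C}_M$, and the canonical class of $-\omega$ is $-K$; as $\mathcal{P}_M^{-K}=-\mathcal{P}_M^{K}$, it suffices to prove $\mathcal{P}_M^{K}\subset\mathcal{C}_M$ and then apply $\alpha\mapsto-\alpha$. If $M$ has Kodaira dimension $0$ then $K$ is numerically trivial, both half-cones are empty, and the statement is vacuous (the genuine content, $\mathcal{P}_M\subset\mathcal{C}_M$ for $K3$ and tori, being the separate result of \cite{TJL1}). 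If $M$ has Kodaira dimension $1$ and no multiple fibers it is a minimal properly elliptic surface of the type covered by Theorem \ref{t:main}(1), which, there being no exceptional classes, gives $\mathcal{C}_M=\{\alpha\in\mathcal{P}_M:\alpha\cdot F\ne0\}$; the canonical bundle formula gives $K\equiv\lambda F$ with $\lambda=2g(C)-2+\chi(\mathcal{O}_M)>0$, so $\alpha\cdot K>0\iff\alpha\cdot F>0$ and $\mathcal{P}_M^{K}\subset\mathcal{C}_M$ follows.

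The substance is therefore the case of minimal surfaces of general type (together with, beyond the reach of Theorem \ref{t:main}, properly elliptic surfaces with multiple fibers); I focus on general type, where $K$ is nef and big with $K^2>0$. I would build up $\mathcal{C}_M$ by inflation starting from the Kähler cone $\mathcal{K}_M$: Kähler forms are symplectic, so $\mathcal{K}_M\subset\mathcal{C}_M$, and every ample class pairs positively with the nonzero nef class $K$ (using bigness to get effectivity of a multiple), so $\mathcal{K}_M\subset\mathcal{P}_M^{K}$. Because $b^+>1$, Taubes \cite{T} represents $PD(K)$ by an embedded $\omega$-symplectic surface, and since $K^2>0$ the Li--Usher inflation procedure gives $[\omega]+tK\in\mathcal{C}_M$ for all $t\ge0$; thus $\mathcal{K}_M+\mathbb{R}_{\ge0}K\subset\mathcal{C}_M$. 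Note that $\mathcal{P}_M^{K}$ is star-shaped about $K$, since for $\alpha\in\mathcal{P}_M^{K}$ the segment to $K$ has positive square and positive pairing with $K$ throughout, so it is connected; however, a direct check shows $\mathcal{P}_M^{K}\not\subset\mathcal{K}_M+\mathbb{R}_{\ge0}K$ in general, so inflation along $K$ alone does not suffice and one must move transverse to $K$.

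To reach the transverse directions I would inflate along the negative curves that bound the Kähler chamber. The $(-2)$-curves $C$ satisfy $K\cdot C=0$, so the associated reflections fix $K$ and preserve $\mathcal{P}_M^{K}$, and partial negative inflation along such $C$, valid while the inflation parameter stays below $\omega\cdot C/(-C^2)$, crosses the wall into the neighbouring chamber without changing $\alpha\cdot K$. I would then organize the argument as an exhaustion: let $\mathcal{S}\subset\mathcal{P}_M^{K}$ be the set of classes reachable from $\mathcal{K}_M$ by finite sequences of positive inflations along $K$ and partial inflations along negative curves. Since being symplectic is open and inflation yields open families, $\mathcal{S}$ is open; the aim is to show $\mathcal{S}$ is relatively closed in $\mathcal{P}_M^{K}$ by checking that each boundary point either lies on a chamber wall and is recovered from an adjacent reachable chamber, or is approached along $\mathbb{R}_{\ge0}K$ from the interior. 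Connectedness of $\mathcal{P}_M^{K}$ then forces $\mathcal{S}=\mathcal{P}_M^{K}$. Varying the complex structure within its deformation class and taking the union of the resulting Kähler cones would furnish additional symplectic classes and could shorten the wall-crossing.

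The main obstacle is precisely this filling step for general type. Unlike the elliptic case, $\mathcal{P}_M^{K}$ is non-convex---for $b^+\ge2$ the positive cone is connected but not convex---so no single Kähler class dominates it and the chamber structure of the positive cone must genuinely be traversed. The $(-2)$-walls are benign because their reflections fix $K$, but walls arising from curves $C$ with $K\cdot C>0$ and $C^2<0$, the need to control inflation parameters uniformly across the possibly infinite set of negative curves, and above all the reachability of classes near the null boundary $\{\alpha^2=0\}$ are where the difficulty concentrates. A complete proof will likely require a global description of the effective and nef cones of $M$ together with a quantitative form of inflation that tracks the admissible parameter ranges through successive wall crossings.
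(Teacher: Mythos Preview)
This statement is labeled a \emph{conjecture} in the paper, and the paper does not claim to prove it in general. What the paper establishes is the special case where $M$ is a relatively minimal elliptic surface without multiple fibers and $b^+>1$: this is the Corollary following Theorem~\ref{t:relmain}, which gives $\mathcal C_M=\mathcal P_M^F\cup\mathcal P_M^{-F}$ and then invokes $K=\delta F$ with $\delta>0$ (Lemma~\ref{l:sign}) to identify $\mathcal P_M^{\pm K}$ with $\mathcal P_M^{\pm F}$. Your treatment of the $\kappa=1$, no-multiple-fiber case is exactly this argument and is correct. Your $\kappa=0$ remark has a small mismatch with the paper's conventions: the paper sets $\mathcal P_M^0=\mathcal P_M$, so for $K=0$ the conjecture reads $\mathcal P_M\subset\mathcal C_M$ rather than being vacuous, though you do point to the relevant result in \cite{TJL1}.

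The genuine gap is that your proposal is not a proof but a strategy sketch, and you say so yourself. For surfaces of general type you outline an inflation-and-wall-crossing program starting from the K\"ahler cone, then list the obstacles (non-convexity of $\mathcal P_M^K$, walls from curves with $K\cdot C>0$, uniform control near the null boundary) without resolving them. These are precisely the reasons the statement remains a conjecture; nothing in the paper addresses general type, and the paper explicitly leaves elliptic surfaces with multiple fibers to a sequel. So there is no ``paper's proof'' to compare against beyond the elliptic case, and for that case your argument and the paper's coincide.
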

This then implies that every class $\alpha$ of positive square with $\alpha\cdot K\ne 0$ is represented by a symplectic form.  A weaker version was stated by Hamilton:
\begin{conjecture}(\cite{H}, Conjecture 2)  Let $\overline{\mathcal C}_M$ be the closure of the symplectic cone in $H^2(M,\mathbb R)$. Then  
\[
\mathcal P_M^K\cup\mathcal P_M^{-K} \subset \overline{\mathcal C}_M.
\] 
\end{conjecture}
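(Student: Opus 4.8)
The plan is to organize the argument by the Enriques--Kodaira type of the minimal model, after two reductions. First, if $K=0$ then $\mathcal P_M^K$ and $\mathcal P_M^{-K}$ are both empty and there is nothing to prove; this disposes of the $K3$ and abelian cases (Enriques, bielliptic and $T^2$-bundles do not arise, since they have $b^+=1$). So assume $K\neq 0$. Since $M$ is minimal K\"ahler with $b^+>1$ we have $p_g=\tfrac{1}{2}(b^+-1)>0$, hence $K$ is effective and the Kodaira dimension satisfies $\kappa(M)\ge 0$; together with $K\neq 0$ this forces $\kappa(M)\in\{1,2\}$, and minimality makes $K$ nef. Second, in dimension four $-\omega$ is again an orientation-compatible symplectic form, with $c_1(M,-\omega)=-c_1(M,\omega)$, so $\mathcal C_M$ and hence $\overline{\mathcal C_M}$ are invariant under $\alpha\mapsto-\alpha$. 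As $\mathcal P_M^{-K}=-\mathcal P_M^{K}$, it suffices to prove $\mathcal P_M^K\subset\overline{\mathcal C_M}$.

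The uniform engine I would use for both remaining cases is deformation of the complex structure together with the Demailly--Paun description of the K\"ahler cone. For any deformation $M_t$ of $M$, the K\"ahler cone $\mathcal K_{M_t}$ is a component of $\{\beta\in H^{1,1}(M_t,\mathbb R):\beta^2>0,\ \beta\cdot[Z]>0\ \text{for all curves }Z\}$, and a K\"ahler form on $M_t$ is, via the Ehresmann identification of the family and Gauss--Manin transport, a symplectic form on $M$ in the same cohomology class; thus $\mathcal K_{M_t}\subset\mathcal C_M$ for every $t$. Because $p_g>0$, the period of $M$ varies in a positive-dimensional family, so for a generic deformation the N\'eron--Severi lattice shrinks and the negative curves $Z$ that bound $\mathcal K_M$ disappear, enlarging the K\"ahler cone toward $\{\beta^2>0,\ \beta\cdot K>0\}\cap H^{1,1}(M_t,\mathbb R)$. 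Sweeping the transcendental directions of $H^{1,1}(M_t)$ over all $t$, the union $\bigcup_t\mathcal K_{M_t}$ should be dense in $\mathcal P_M^K$, whence $\mathcal P_M^K\subset\overline{\bigcup_t\mathcal K_{M_t}}\subset\overline{\mathcal C_M}$.

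In the properly elliptic case $\kappa(M)=1$ without multiple fibers one can bypass this and appeal directly to Theorem \ref{t:main}. There $b^+>1$ and the minimal model is neither $E(2)$, Enriques, nor a $T^2$-bundle (all with $K=0$), so part (1) applies and $K=mF$ with $m>0$; consequently $\alpha\cdot F\neq 0$ is equivalent to $\alpha\cdot K\neq 0$, and Theorem \ref{t:main} gives $\mathcal C_M=\{\alpha\in\mathcal P_M:\alpha\cdot K\neq 0,\ \alpha\cdot E\neq 0\ \forall E\in\mathcal E\}$. The locally finite family of hyperplanes $E^\perp$ cuts out an open dense subset of $\mathcal P_M^K$, so $\mathcal P_M^K\subset\overline{\mathcal C_M}$ is immediate and in fact $\mathcal P_M^K\subset\mathcal C_M$; no information about $\mathcal E$ beyond local finiteness is needed for the closure.

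The main obstacle is the density statement of the second paragraph in the two cases not covered by Theorem \ref{t:main}: surfaces of general type, and elliptic surfaces with multiple fibers. One must show that a generic target $\alpha\in\mathcal P_M^K$ can be made of type $(1,1)$ and K\"ahler on some deformation --- that is, that the image of the period map is large enough and that no negative curve persists transverse to $\alpha$ --- and one must accommodate rigid surfaces of general type, where deformations of $M$ are unavailable and the argument would instead have to cross the walls $Z^\perp$ of the non Seiberg--Witten-basic negative curves directly, for instance by inflating along the Taubes canonical curve in class $K$ and passing to a limit. It is precisely the relaxation to the closure that makes the plan viable: only density of $\bigcup_t\mathcal K_{M_t}$ in $\mathcal P_M^K$ is required, not the sharp identification of $\mathcal C_M$ itself.
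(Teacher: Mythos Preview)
This statement is a \emph{conjecture} in the paper, not a theorem; the paper does not prove it and offers no proof to compare against. What the paper does establish is the stronger Conjecture~\ref{co:1} (hence this one) in the special case of relatively minimal elliptic surfaces without multiple fibers and $b^+>1$, via the Corollary following Theorem~\ref{t:relmain}. Your third paragraph essentially reproduces that corollary: for such $M$ one has $\mathcal E=\emptyset$ (since $M$ is minimal), so Theorem~\ref{t:main} gives $\mathcal C_M=\{\alpha\in\mathcal P_M:\alpha\cdot F\neq 0\}=\mathcal P_M^K\cup\mathcal P_M^{-K}$ directly, and no density argument or discussion of hyperplanes $E^\perp$ is needed.

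Your broader strategy---deform the complex structure, invoke Demailly--Paun, and take a union of K\"ahler cones---is a plausible heuristic, but as you yourself note it is not a proof: the density of $\bigcup_t\mathcal K_{M_t}$ in $\mathcal P_M^K$ is exactly the hard part, and for rigid surfaces of general type the deformation space is a point, so the argument collapses entirely. The inflation-along-$K$ suggestion in that case would at best produce classes on the ray through $K$, not a dense subset of $\mathcal P_M^K$. So what you have written is a sketch of an approach to an open problem, with the central step missing, rather than a proof.

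Two smaller issues. First, your handling of $K=0$ conflicts with the paper's convention $\mathcal P_M^0=\mathcal P_M$; under that convention the $K=0$ case is the assertion $\mathcal P_M\subset\overline{\mathcal C_M}$, which for $K3$ and abelian surfaces is known (and cited in the paper) but is not vacuous. Second, since the conjecture concerns \emph{minimal} $M$, the set $\mathcal E$ is empty throughout, so your remarks about the ``locally finite family of hyperplanes $E^\perp$'' are superfluous.
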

This would imply that the symplectic cone is dense in $\mathcal P_M^K\cup\mathcal P_M^{-K}$.

To determine the symplectic cones in Theorem \ref{t:main}, the relative symplectic cones of elliptic surfaces, relative to the generic fiber $F_g$, are determined (Theorems \ref{t:full} and \ref{t:mult}).

\begin{theorem}\label{t:relmain}
Let $M$ be an elliptic surface with $\chi(M)>0$ and $b^+>1$.  Let $F_g$ be an oriented
 generic fiber such that $\mathcal C^{F_g}_M\ne \emptyset$.  Let 
\[
\mathcal K(F_g)=\{K\in\mathcal K\;|\;K\cdot [F_g]=0\}
\]
be the set of symplectic canonical classes of $M$ which evaluate to 0 on $[F_g]=F$ and for $K\in\mathcal K$ denote
\[
\mathcal E_K=\{E\in\mathcal E\;|\;K\cdot E=-1\}.
\]  
Then
\[
\bigsqcup_{K\in\mathcal K(F_g)}\mathcal C_{M,K}^F=\mathcal C_M^{F_g}
\]
where 
\[
\mathcal C_{M,K}^F=\{\alpha\in\mathcal P^F_M\;|\; \alpha\cdot E>0\;\forall E\in\mathcal E_K\}.
\]
\end{theorem}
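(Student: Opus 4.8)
The plan is to prove the two inclusions separately, together with disjointness of the union on the left, and to reduce the construction of symplectic forms to the minimal model.

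\emph{The constraint direction $\mathcal C_M^{F_g}\subseteq\bigsqcup_{K}\mathcal C_{M,K}^F$ and disjointness.} Given $\alpha\in\mathcal C_M^{F_g}$, fix a symplectic form $\omega$ with $[\omega]=\alpha$ for which $F_g$ is an orientation-compatible symplectic submanifold and set $K=K_\omega=-c_1(M,\omega)$. Then $\alpha^2=\int_M\omega\wedge\omega>0$ and $\alpha\cdot F=\int_{F_g}\omega>0$, so $\alpha\in\mathcal P_M^F$; and since $F_g$ is a symplectic torus of square $0$, symplectic adjunction gives $K\cdot F=2g(F_g)-2-F^2=0$, so $K\in\mathcal K(F_g)$. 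If $E\in\mathcal E_K$, then $E$ is a smooth $(-1)$-sphere class with $K_\omega\cdot E=-1$; by Taubes' results relating Seiberg--Witten and Gromov invariants when $b^+>1$, and by the wall-crossing results of \cite{TJLL} when $b^+=1$, the class $E$ is represented by an embedded $\omega$-symplectic sphere, so $\alpha\cdot E>0$. Hence $\alpha\in\mathcal C_{M,K}^F$. For disjointness, suppose $\alpha\in\mathcal C_{M,K_1}^F\cap\mathcal C_{M,K_2}^F$ with $K_1\ne K_2$; I would produce $E\in\mathcal E$ with $K_1\cdot E=-1$ and $K_2\cdot E=1$, so that $E\in\mathcal E_{K_1}$ forces $\alpha\cdot E>0$ while $-E\in\mathcal E_{K_2}$ forces $\alpha\cdot E<0$, a contradiction. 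Such an $E$ is supplied by the normal form for $\mathcal E$ and $\mathcal K$ of a blow-up of a minimal surface: two canonical classes pairing trivially with $F$ differ by sign changes along an orthogonal collection of exceptional classes, and a slot in which they disagree gives the required $E$.

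\emph{The construction direction: reduction to the minimal model.} Write $\pi\colon M\to M_{\min}$ as a blow-down at $k$ points, which we arrange to lie off a chosen generic fiber $F_g$. After a diffeomorphism fixing $[F_g]$ we may take $K=\pi^*K_{M_{\min}}+\sum_iE_i$, so that $E_1,\dots,E_k\in\mathcal E_K$; then any $\alpha\in\mathcal C_{M,K}^F$ has $\alpha\cdot E_i>0$, and writing $\alpha=\pi^*\alpha_0-\sum_i(\alpha\cdot E_i)E_i$ we get $\alpha_0\cdot F=\alpha\cdot F>0$ and $\alpha_0^2=\alpha^2+\sum_i(\alpha\cdot E_i)^2>0$, so $\alpha_0\in\mathcal P_{M_{\min}}^F$. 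The plan is then to realize a large translate $\alpha+tF^*=\pi^*(\alpha_0+tF^*)-\sum_i(\alpha\cdot E_i)E_i$ by symplectically blowing up a symplectic form in class $\alpha_0+tF^*$ on $M_{\min}$ (the blow-up sizes $\alpha\cdot E_i$ stay fixed while the ambient form grows with $t$, so there is room for $t\gg 0$), and then to deflate along $F_g$ down to $\alpha$ itself. Deflation is available because in the resulting form every fiber of the elliptic pencil is symplectic, so subtracting a small multiple of the pullback of an area form from the base keeps the form symplectic and lowers the class by a positive multiple of $F^*$; and $(\alpha+sF^*)^2=\alpha^2+2s(\alpha\cdot F)>0$ for all $s\ge 0$, so the square does not obstruct the descent. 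This reduces everything to the case of $M$ minimal.

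\emph{The minimal case.} It remains to show $\mathcal P_M^F\subseteq\mathcal C_M^{F_g}$ for $M$ minimal, where $\mathcal E=\emptyset$ when $b^+>1$ and $K_M$ is a nonnegative multiple of $F$ by the canonical bundle formula. Here $M$ admits Kähler structures for which $F_g$, being a complex curve, is symplectic, and the Kähler cone of such a structure is cut out by positivity against $F$, against a section or multisection, and against the finitely many irreducible components of the singular fibers. Given $\alpha\in\mathcal P_M^F$, after reflections in the $(-2)$-sphere classes supported in singular fibers — realized by diffeomorphisms supported near those fibers and hence fixing $[F_g]$ — we may assume $\alpha$ pairs nonnegatively with all these curve classes; then $\alpha+tF^*$ lies in a Kähler cone for $t\gg 0$, since adding $tF^*$ makes the pairing with the section positive without changing the pairings with the fiber components. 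This Kähler form has $F_g$ symplectic, and deflating along $F_g$ as above produces $\alpha$. The remaining minimal models $E(2)$, Enriques surfaces, and $T^2$-bundles over $T^2$ (where $K_M=0$), together with all cases with $b^+=1$, are covered by \cite{TJL1}, \cite{TJLL}, \cite{DL} and the references cited in the introduction for $T^2$-bundles.

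\emph{Main obstacle.} The delicate point is the minimal case, specifically the interplay of inflation and deflation: one must verify that the chamber decomposition of $\mathcal P_M^F$ under reflections in $(-2)$-classes, the Kähler cones of the available complex structures, and the finite deflation range along $F_g$ fit together so as to reach every class of $\mathcal P_M^F$ exactly (not merely up to an arbitrarily small multiple of $F^*$), and then, after the blow-up reduction, every class of $\mathcal C_{M,K}^F$ — all while keeping $F_g$ symplectic throughout. This is where the genus of the base and the configurations of singular fibers, i.e. the classification of elliptic surfaces without multiple fibers, are used essentially, and it is the source of the case distinctions appearing in Theorem \ref{t:main}.
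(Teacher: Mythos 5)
Your reduction strategy has a genuine gap at the step you rely on twice: ``deflation along $F_g$.'' The inflation technique (Lemma \ref{l:infl}) only permits \emph{adding} $\epsilon[V]$ with $\epsilon\ge 0$ for a symplectic $V$ of nonnegative square; there is no corresponding result that lets you pass from a symplectic form in class $\alpha+tF$ to one in class $\alpha$. Subtracting the pullback of a base area form from a fibration-compatible form degenerates the form once the horizontal term drops below a threshold determined by the (unknown) vertical and mixed parts, and positivity of $(\alpha+sF)^2$ for all $s\ge 0$ is nowhere near sufficient for nondegeneracy of a representative $2$-form. In effect your plan constructs forms in classes $\alpha+tF^*$ for $t\gg 0$ --- which is comparatively easy --- and then asserts the descent to $\alpha$ itself, but that descent is exactly as hard as the theorem. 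The paper works in the opposite direction throughout: it builds a symplectic form in a class that differs from $\alpha$ by a \emph{nonnegative} combination of $F$ and other symplectic or Lagrangian-made-symplectic surfaces (e.g.\ the rim-stripped, sum balanced class of Definition \ref{d:sbal}, or $\alpha_0-\delta F$) and then inflates \emph{up} to $\alpha$.

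Two further points. First, your minimal case rests on the claim that, after $(-2)$-reflections, $\alpha+tF^*$ lands in a K\"ahler cone for $t\gg 0$; this is established for $K3$ ($E(2)$, via \cite{TJL1}) but not for $\kappa=1$ elliptic surfaces with $b^+>1$, where the complex structures available on the fixed smooth manifold and their K\"ahler cones are not known to sweep out $\mathcal P_M^F$ even up to the geometric automorphism group --- this is precisely why the paper replaces the K\"ahler argument by the fiber sum decomposition $M=X\#_{F_g}Y$, Gompf's sum, and the balancing machinery of Section \ref{s:3}. Second, you never address the rim torus classes: a general $\alpha\in\mathcal P_M^F$ has components on classes $\mathcal R_i, T_i$ that are invisible in any K\"ahler or fibration-compatible picture, and recovering them is the main technical content of Theorem \ref{t:sym} (Lagrangian surgery, Theorem \ref{t:lag}, then inflation along the resulting symplectic rim pair). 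Your constraint direction and the disjointness argument are fine and agree with the paper's use of Lemma 3.5 of \cite{TJLL}, but the construction direction as proposed does not close.
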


This result implies the following for relatively minimal elliptic surfaces with $b^+>1$ and positive Euler characteristic which admit symplectic structures:

\begin{cor}Let $M$ be a relatively minimal elliptic surface with $b^+(M)>1$ and $\chi(M)>0$.  Assume that  $\mathcal C_M\ne \emptyset$.  Then
\[
\mathcal P_M^F\cup \mathcal P_M^{-F}=\mathcal C_M.
\]
In particular, Conj \ref{co:1} holds if $M$ underlies a minimal K\"ahler surface with $b^+>1$, $\kappa(M)=1$ and positive Euler number.
\end{cor}

%
%
%
%
%
%
%
%
%
%
%
%

The proofs of Theorems \ref{t:main} and \ref{t:relmain} presented in this note break into two key parts:\begin{enumerate}
\item On the underlying smooth manifold, diffeomorphisms are used to control certain coefficients of classes lying in $\mathcal P_M^{[V]}$.  For $T^4$, such diffeomorphisms are explicitly constructed and their action on $H^2(M,\mathbb R)$ studied.  In the $\chi>0$ case, while the explicit diffeomorphisms are rather hard to come by, the structure of the geometric automorphism group $O$ (see Def \ref{d:aut}), this is the image of $Diff^+(M)$ in $H^2(M)$ modulo torsion, is rather well understood by the work of \cite{FM2}, \cite{Lo} and \cite{H2}.

The key results obtained from these automorphisms is the ability to reduce certain coefficients below any threshold to obtain a sum balanced class.  In other words, it becomes possible to concentrate the volume of a class $\alpha$ in certain terms and, when $M$ is written as a fiber sum, in one or the other summand as needed.  See for example Lemma \ref{l:t4} or Theorem \ref{t:mine1} for examples of this behavior.

These arguments are purely topological, they make no use of any symplectic arguments and also apply to elliptic surfaces with multiple fibers.  They are the content of Section \ref{s:3}.

\item Once a class has been made into a sum balanced class with respect to a splitting of $M$ as $X\#_{F_g}Y$ (see Def. \ref{d:sbal}), the class is split into three parts:  two parts lying wholly in $X$ or $Y$ and a rim torus component.  Using results in \cite{Go2} and \cite{H} and an inflation argument, it is then possible to show that a sum balanced class lies in the relative symplectic cone $\mathcal C_M^{F_g}$ if the corresponding cones in $X$ and $Y$ are understood.  This is the content of Section \ref{s:cp}.

\end{enumerate}

A short remark on notation:  $H_2(M,\mathbb R)$ and $H^2(M,\mathbb R)$ will rarely be distinguished.  In particular, automorphisms of $H_2$ and $H^2$ will not be distinguished.  
A generic fiber of the elliptic surface $M$ will be denoted by $F_g$, it's class by $F$.

{\bf Acknowledgements}  We would like to thank Bob Gompf for his interest and comments on our work as well as a suggestion for future work.  Further, we would like to thank Mark J. D. Hamilton for his careful reading of this manuscript and providing valuable feedback.

\section{Elliptic Surfaces}

Let $M$ denote an elliptic surface.  That is, $M$ is a complex surface admitting a holomorphic map to a complex curve $C$ of genus $g$ such that the generic fiber is a smooth elliptic curve.  $M$ is relatively minimal if no fiber contains an exceptional curve. 

Elliptic surfaces have been smoothly classified, this makes use of the fiber connected sum and multiple fibers.

\subsection{Singular Fibers}
An elliptic surface $M$ may have multiple and singular fibers, see \cite{BPV} for a precise classification. In the smooth category, it suffices to consider fishtail fibers and smooth multiple fibers \cite{GS}.

For any point in $x\in C\backslash\{x_1,...,x_t\}$, the fiber $F_x$ is smooth and diffeomorphic to $T^2$, this is the generic fiber $F_g$.  The fibers above $F_{x_i}$ may be of one of the following:\begin{enumerate}

\item A fishtail fiber (in Kodaira's notation $I_1$) is a sphere with a single transverse self-intersection point, locally modeled by $z_1z_2=0$.  Topologically, such a fiber can arise by collapsing a representative of one of the generators of $\pi_1(T^2)$ to a point.

%

\item A smooth multiple fiber is any fiber $F_{x_i}$ with $F=m[F_{x_i}]\in H_2(M,\mathbb Z)$, $m\ge 2$.  Kodaira \cite{Kod} described a procedure, called the logarithmic transform, to generate such fibers in the complex category.  Smoothly, this is a torus surgery, whereby a neighborhood $N$ of $F_g$ is removed and $T^2\times D^2$ is glued back in via a diffeomorphism $\phi:T^2\times \partial D^2\rightarrow \partial (M\backslash N)$, see\cite{GS}.  Denote the logarithmic transform of $M$ of multiplicity $p$ by $M(p)$.   Note that in the presence of multiple fibers, the generic fiber $F$ is no longer primitive.
\end{enumerate}

\subsection{Fiber Connected Sum} Let $M=X_1\#_{F_g}X_2$ be an elliptic surface obtained as the fiber sum of elliptic surfaces $X_i$ along a generic torus fiber $F_g$ by removing neighborhoods of $F_g$ in $X_1$ and $X_2$ and gluing along the boundary by an orientation reversing diffeomorphism.  The diffeomorphism will generally be implicit in the notation.

Any class $\alpha\in H_2(X_1\#_{F_g}X_2,\mathbb R)$ decomposes as follows (\cite{DL}, \cite{H1}):

\begin{equation}\label{e:dec}
\alpha=\alpha_{X_1}+\alpha_{X_2}+\alpha_F+\alpha_{RT}.
\end{equation}

In this decomposition, $\alpha_F$ consists of the class $F=[F_g]$ of the submanifold along which the sum is performed and a class $\Gamma$ composed out of elements of the homology of both $X_i$, a type of "section", which intersects $F_g$ non-trivially.  The class $\alpha_{RT}$ is composed of two pairs $(\mathcal R_i, S_i)$ which are rim tori and dual vanishing classes generated in the fiber sum, but which do not exist in either $X_i$.  Depending on the decomposition of $M$, this class may exist or be empty.  Finally, the classes $\alpha_{X_i}$ contain all classes of $X_i$ which are supported away from a neighborhood of the submanifold $F_g$ and intersect $F$ and $\Gamma$ trivially (hence especially does not include the fiber class).

Hence for $M$ an elliptic surface, this decomposition satisfies
\[
\alpha_{X_i}\cdot\alpha_F=\alpha_{X_i}\cdot \alpha_{RT}=\alpha_F\cdot \alpha_{RT}=0,
\]
\[
F\cdot\Gamma\ge 1,\mbox{  }\mathcal R_i\cdot S_j=\delta_{ij},
\]
and
\[
F^2=\mathcal R_i\cdot\mathcal R_j=0.
\]

In the presence of multiple fibers, the generic fiber class $F$ is no longer primitive, let $F=\tau f$, $\tau\in\mathbb Z$ and $f$ a primitive class.  It is possible to choose $\Gamma$ such that $f\cdot \Gamma=1$.  In the absence of multiple fibers, the class $\Gamma$ can be chosen to be the class of a smooth section.

\subsection{Examples of Elliptic Surfaces}  Using the fiber sum, elliptic surfaces can be constructed from a few basic surfaces.  Theorem \ref{t:class} will state that if $\chi(M)>0$, then this describes all elliptic surfaces.  This also introduces notation that will be subsequently used.

Let $L(p_1,...,p_k)$ be the $T^2$-bundle $S^2\times T^2$ with multiple torus fibers of multiplicities $(p_1,..,p_k)$, $k\ge 1$, $T^2\times \Sigma_g$ be the trivial $T^2$-bundle over a closed surface of genus $g$ and $E(1)=\mathbb CP^2\#9\overline{\mathbb CP^2}$.  Inductively define 
\[
E(n)=E(n-1)\#_{F_g}E(1),
\]
\[
E(n,g)=E(n)\#_{F_g}(T^2\times \Sigma_g)
\]
and
\[
E(n,g,p_1,..,p_k)=E(n,g)\#_{F_g}L(p_1,..,p_k)
\]
This defines a relatively minimal elliptic surface over a curve of genus $g$ with multiple fibers of multiplicities $(p_1,...,p_k)$ and with $\chi(E(n,g,p_1,...,p_k))=12n>0$.

\subsection{Smooth Classification}

Every relatively minimal elliptic surface $M$ arises from $E(1)=\mathbb CP^2\#9\overline{\mathbb CP^2}$ and $T^2$-bundles over $T^2$ using the fiber sum along a generic smooth fiber $F_g$  and logarithmic transforms.  In the case that $\chi(M)=0$, then the only singular fibers are multiple fibers.  If $\chi(M)>0$, then $M$ must contain an $E(1)$-summand.  The following Theorem gives a full classification of relatively minimal elliptic surfaces up to diffeomorphism.

\begin{theorem}\label{t:class}Let $M$ be a relatively minimal elliptic surface.\begin{enumerate}
\item (\cite{Ue},\cite{Ma}, see also \cite{FM2}, \cite{Ue2}, \cite{GS} ) Assume that $\chi(M)=0$. Then $M$ is obtained from a torus bundle over an orientable surface $\Sigma_g$ ($g\ge 0$) by logarithmic transforms.  The diffeomorphism type is determined by the fundamental group of $M$.  
\item (Thm. 8.3.12, \cite{GS}) Assume that $\chi(M)\ne 0$.  Then $M$ is diffeomorphic to $E(n,g,p_1,..,p_k)$  for exactly one choice of $(n,g,p_1,..,p_k)$, where $n\ge 1$, $g,k\ge 0$ and $2\le p_i$.  If $(n,g)=(1,0)$, then $k\ne 1$. 
\end{enumerate}

\end{theorem}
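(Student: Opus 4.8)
The plan is to combine Kodaira's holomorphic structure theory of elliptic surfaces with the smooth invariants that separate the resulting manifolds. First I would invoke Kodaira's classification (see \cite{BPV}): every relatively minimal elliptic surface $M$ is obtained from a relatively minimal elliptic surface $M_0$ \emph{without} multiple fibers by finitely many logarithmic transforms, and $M_0$ is determined up to deformation equivalence by its base genus $g$ together with $\chi(M_0)=\chi(M)\in 12\mathbb Z_{\ge 0}$. When $\chi(M)=0$ set $n=0$, and $M_0$ is a torus bundle over $\Sigma_g$ (there are no singular fibers at all); when $\chi(M)=12n>0$ then $n\ge 1$ and $M_0$ is deformation equivalent to $E(n,g)$, using the classification of elliptic surfaces without multiple fibers as iterated fiber sums of $E(1)$ with a single $T^2\times\Sigma_g$. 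Since $\chi$ and $b_1$ are homotopy invariants, the two remaining tasks are (i) to read $g$ and the multiplicities $p_i$ off smooth invariants, and (ii) to check that the logarithmic transform / fiber sum construction yields a diffeomorphism type depending only on the listed data.

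For part (1), the case $\chi(M)=0$: the structural statement that $M$ is a logarithmic transform of a torus bundle is exactly Kodaira's description just recalled, so it remains to prove $\pi_1$-rigidity. Here $M$ is aspherical and, even after the logarithmic transforms, carries a locally homogeneous geometric structure in the sense of Thurston, as classified in dimension $4$ by Filipkiewicz and studied by Wall. I would then quote the rigidity theorem for such geometric $4$-manifolds (Ue \cite{Ue}, \cite{Ue2}, Matsumoto \cite{Ma}; see also \cite{GS}): two closed geometric $4$-manifolds modelled on the relevant geometries with isomorphic fundamental groups are diffeomorphic. This gives (1) directly, with $\pi_1(M)$ the complete invariant.

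For part (2), the case $\chi(M)\ne 0$: first, $n=\chi(M)/12$ is a homotopy invariant of $M$, and since $b_1(E(n,g,p_1,\dots,p_k))=2g$ for $n\ge 1$ (the fiber sum kills the $T^2$-factor of $\pi_1(T^2\times\Sigma_g)$, and logarithmic transforms do not change $b_1$), the genus $g$ is recovered from $b_1(M)$. The heart of the argument is recovering the multiset $\{p_1,\dots,p_k\}$. For this I would use the explicit computation of the Seiberg-Witten basic classes and invariants of $E(n,g,p_1,\dots,p_k)$ (Fintushel-Stern, Friedman-Morgan \cite{FM2}), which is a product over the multiple fibers in the classes $f_i=F/p_i$; comparing the resulting finite set of basic classes together with their multiplicities shows that the multiset of the $p_i$ is a diffeomorphism invariant, the collision that must be accounted for being that a single logarithmic transform on $E(1)$ gives back a manifold diffeomorphic to $E(1)$ itself (Gompf), which forces the exclusion $k\ne 1$ when $(n,g)=(1,0)$. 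Conversely, commutativity and associativity of the fiber sum, together with the independence of the diffeomorphism type on the ordering of the $p_i$ and the triviality of a multiplicity-one transform, show that $E(n,g,p_1,\dots,p_k)=E(n,g)\#_{F_g}L(p_1,\dots,p_k)$ is well defined and realizes every admissible tuple, giving the existence half of (2).

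The step I expect to be the main obstacle is the last part of (2): proving that the multiset of multiplicities is not changed by any diffeomorphism. The holomorphic and fiber sum descriptions are essentially formal once Kodaira's theory is granted, and part (1) reduces to quoting a known rigidity theorem, but distinguishing the smooth structures genuinely requires the gauge-theoretic computation of Seiberg-Witten (or Donaldson) invariants of logarithmic transforms and the extraction of the $p_i$ from that data. Additional care is needed with the low-complexity boundary cases --- single transforms on $E(1)$, and tuples of multiplicities with a nontrivial common divisor, which introduce torsion into $\pi_1$ --- to confirm that the normalization $2\le p_i$, together with $k\ne 1$ for $(n,g)=(1,0)$, selects exactly one representative per diffeomorphism class.
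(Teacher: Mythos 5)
The paper does not prove this theorem at all: it is quoted verbatim from the literature, with part (1) attributed to Ue and Matsumoto and part (2) to Theorem 8.3.12 of Gompf--Stipsicz, so there is no in-paper argument to compare against. Your sketch is a faithful reconstruction of how those references actually proceed: Kodaira's structure theory reduces everything to torus bundles plus logarithmic transforms (for $\chi=0$) or to $E(n,g)$ plus logarithmic transforms (for $\chi=12n>0$); the $\chi=0$ case is settled by Ue's rigidity theorem for geometric $4$-manifolds, with $\pi_1$ as the complete invariant; and the $\chi\ne 0$ case recovers $n$ from $\chi$, $g$ from $b_1$, and the multiset $\{p_i\}$ from gauge-theoretic invariants, with $E(1,0,p)\cong E(1)$ forcing the exclusion $k\ne 1$ when $(n,g)=(1,0)$ (exactly the remark the paper makes after the theorem). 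You have also correctly identified where the real work lies. Two small caveats on your outline: for $n=1$ the manifolds have $b^+=1$, so the Seiberg--Witten invariants are only defined per chamber and one must either use wall-crossing formulas or, as Friedman--Morgan originally did, Donaldson invariants to separate the Dolgachev surfaces; and the claim that the multiple-fiber-free model $M_0$ is determined by $(g,\chi)$ alone is itself a nontrivial input (it is where the fiber-sum decomposition $E(n,g)=nE(1)\#_{F_g}(T^2\times\Sigma_g)$ and its independence of choices enter). Neither point is a gap in intent --- you flag both classes of difficulty yourself --- but they are the places where a full write-up would have to import substantial external theorems rather than argue directly.
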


Note that $E(1)$ is diffeomorphic to $E(1,0,p)$, hence the final condition in the theorem.  The diffeomorphism sends the multiple fiber class $F_p$ of $E(1,0,p)$ to the fiber class $F$ of $E(1)$.  

For a relatively minimal elliptic surface $M$ with $\chi(M)=12n$ over $\Sigma_g$ and  with given fiber class $F$, the canonical divisor $\mathcal K_{min}$ is given by (\cite{FM2}, \cite{BPV})
\begin{equation}\label{e:k}
\mathcal K_{min}= (2g-2+n+k)F-\sum_{i=1}^kF_{p_i}
\end{equation}
where $F_{p_i}$ are the classes of the multiple fibers with $F=p_iF_{p_i}$.  Note that for $2g-2+n+k=0$ and $k=0$, $\mathcal K_{min}= 0$.  Denote by $K_{min}$ the corresponding canonical class. 

\subsection{Kodaira Dimension}  Kodaira dimension $\kappa(M)$ is defined on the minimal model for complex \cite{BPV} and symplectic manifolds (\cite{L1}, \cite{LB1}, \cite{MS}) and, when both are defined, they coincide \cite{DZ}.  For $4$-manifolds, it takes values in $\{-\infty, 0,1,2\}$ and elliptic surfaces satisfy $\kappa(M)\le 1$.  If $M$ is not minimal, then its Kodaira dimension is that of its minimal model.

Assume that $M$ is relatively minimal.  \begin{enumerate}
\item $\kappa(M)=-\infty$:  Then $M$ is diffeomorphic to $E(1)$, a Hopf surface or an $S^2$-bundle over $T^2$ with at most three multiple fibers (see I.3.23, \cite{FM2}).
\item $\kappa(M)=0$: Then $M$ is $E(2)$, an Enriques surface ($\simeq E(1,0,2,2)$), a Kodaira surface or a $T^2$-bundle over $T^2$.  In particular, $K_{min}= 0$ or $2K_{min}=0$.
\item All other elliptic surfaces have $\kappa(M)=1$.
\end{enumerate}

Denote 
\[
\delta=2g-2+n+k-\sum\frac{1}{p_i},
\]
thus $K_{min}=\delta F\in H^2(M,\mathbb R)$.   Furthermore, note that for $\alpha\in H^2(M,\mathbb R)$ and $K_{min}\ne 0$ or torsion,
\[
\alpha\cdot K_{min}\ne 0\Leftrightarrow \alpha\cdot F\ne 0.
\] 

\begin{lemma}\label{l:sign}(V.12.5, \cite{BPV}) Let $M$ be a relatively minimal elliptic surface.  Then
\[
\kappa(M)=\left\{\begin{matrix}-\infty\\0\\1\end{matrix}\right\}\Leftrightarrow \delta\left\{\begin{matrix}<\\=\\>\end{matrix}\right\}0.
\]
\end{lemma}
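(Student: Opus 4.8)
The statement to prove is Lemma \ref{l:sign}, which relates Kodaira dimension of a relatively minimal elliptic surface to the sign of $\delta = 2g-2+n+k-\sum 1/p_i$. This is essentially a bookkeeping lemma following from the preceding classification results, and I would prove it by matching the three cases of Kodaira dimension against the arithmetic of $\delta$.

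\medskip

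\textbf{Proof plan for Lemma \ref{l:sign}.} The plan is to use the formula $K_{min} = \delta F \in H^2(M,\mathbb R)$ together with the trichotomy for $\kappa(M)$ spelled out just above the lemma (the Kodaira dimension discussion, items (1)--(3)), and to check that the sign of $\delta$ detects exactly which of these three situations occurs. First I would recall that since $F$ is a nonzero class with $F^2 = 0$ and $F \ne 0$ in $H_2(M,\mathbb Z)$ (the generic fiber is homologically essential for a relatively minimal elliptic surface), the rational class $K_{min} = \delta F$ is zero in $H^2(M,\mathbb R)$ if and only if $\delta = 0$, and otherwise $K_{min}$ is a nonzero multiple of $F$ whose ``positivity'' is governed by the sign of $\delta$.

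\medskip

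Next I would run through the three cases. For $\kappa(M) = -\infty$: by the classification item (1), $M$ is $E(1)$, a Hopf surface, or an $S^2$-bundle over $T^2$ with at most three multiple fibers; in each of these the plurigenera vanish and more concretely one computes $\delta < 0$ directly from Eq.\ \ref{e:k}: for $E(1)$ we have $g=0, n=1, k=0$ giving $\delta = -1 < 0$, and for the $S^2$-bundles $\chi = 0$ so $n=0$, $g=0$, and $\delta = -2 + k - \sum 1/p_i = \sum(1 - 1/p_i) - 2$, which with $k \le 3$ and $p_i \ge 2$ is at most $3\cdot\frac12 - \cdots$, in fact $<0$ in all the listed subcases. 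For $\kappa(M) = 0$: by item (2), $K_{min} = 0$ or $2K_{min} = 0$, so $\delta F$ is torsion in $H^2(M,\mathbb R)$, hence $\delta = 0$. For $\kappa(M) = 1$: by item (3) these are all the remaining elliptic surfaces, and since $\delta$ takes only one of the three sign values while the cases $\delta < 0$ and $\delta = 0$ have been shown to force $\kappa \in \{-\infty, 0\}$, we must have $\delta > 0$; alternatively one invokes the standard fact that $\kappa = 1$ for an elliptic surface exactly when the canonical bundle has positive degree along the fiber, which is the statement $\delta > 0$. Finally I would note that this gives a bijection between the three sign-classes of $\delta$ and the three values of $\kappa$, which establishes all the ``$\Leftrightarrow$'' claims simultaneously: once the forward implications are known and the two sides partition into exactly three mutually exclusive classes, the reverse implications are automatic.

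\medskip

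The main obstacle is not conceptual but a matter of carefully verifying the numerical inequality $\delta < 0$ in the $\kappa = -\infty$ case across all the surfaces listed in classification item (1) --- in particular making sure the bound $k \le 3$ on the number of multiple fibers for $S^2$-bundles over $T^2$, together with $p_i \ge 2$, really does force $\sum_{i=1}^k (1 - 1/p_i) < 2$; the worst case $k = 3$, $p_i = 2$ for all $i$ gives $\sum(1-1/p_i) = 3/2 < 2$, so the inequality is strict, and $\kappa = -\infty$ indeed corresponds to $\delta < 0$ rather than $\delta \le 0$. Everything else reduces to quoting Eq.\ \ref{e:k}, the relation $K_{min} = \delta F$, and the classification in Theorem \ref{t:class} and the Kodaira dimension list; this is why the lemma is attributed directly to \cite{BPV} V.12.5.
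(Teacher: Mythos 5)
The paper does not prove this lemma at all: it is quoted directly from \cite{BPV}, V.12.5, where it is established via the canonical bundle formula and the growth of the plurigenera $h^0(mK_M)$ as a function of the sign of $\delta$. Your attempt to rederive it from the surrounding classification lists contains a concrete error in the $\kappa(M)=-\infty$ case. You identify $k=3$, $p_i=2$ as the ``worst case'' for the inequality $\sum(1-1/p_i)<2$, but $1-1/p_i$ is \emph{increasing} in $p_i$, so that choice minimizes the sum rather than maximizes it; with $k=3$ and large multiplicities the sum approaches $3>2$, so $\delta<0$ does not follow from $k\le 3$ and $p_i\ge 2$. For instance, multiplicities $(2,3,7)$ over $S^2$ with $n=g=0$ give $\delta=1/42>0$ (and $\kappa=1$), while $(3,3,3)$ gives $\delta=0$ (and $\kappa=0$). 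The list in the paper's item (1) constrains only the rough type of the surface; the multiplicity condition that actually forces $\kappa=-\infty$, namely $\sum 1/p_i>k-2$, is precisely the inequality $\delta<0$ you are trying to prove, so the case check is circular.

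The $\kappa=1$ step has a second problem: from the two forward implications $\kappa=-\infty\Rightarrow\delta<0$ and $\kappa=0\Rightarrow\delta=0$ you cannot conclude that $\delta\le 0$ forces $\kappa\in\{-\infty,0\}$ without already knowing $\kappa=1\Rightarrow\delta>0$, which is what you are trying to establish; the trichotomy argument only closes once all three forward implications are in hand. The alternative ``standard fact'' you invoke is essentially the lemma itself, and is misstated: $K\cdot F=0$ always by adjunction, so the canonical bundle never has positive degree along the fiber. The quantity whose sign matters is the degree of the $\mathbb Q$-divisor on the base appearing in the canonical bundle formula, $K_M=\pi^*(K_B+L)+\sum(p_i-1)F_{p_i}$ with $\deg L=n$, whose $m$-th multiple has roughly $m\delta$ sections' worth of degree on the base --- this is the computation carried out in \cite{BPV} and is the honest route to the lemma.
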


An immediate consequence in the case $\kappa(M)=1$ is for $\alpha\in H^2(M,\mathbb R)$,
\begin{equation}\label{e:sasi}
\alpha\cdot K_{min}> 0\Leftrightarrow \alpha\cdot F> 0.
\end{equation}

\subsection{Positive Euler Characteristic}   Theorem \ref{t:class} shows that every relatively minimal $M$ with positive Euler characteristic is diffeomorphic to some $E(n,g,p_1,...,p_k)$.  This manifold can be split as
\[
E(n,g,p_1,...,p_k)=E(1)\#_{F_g}E(1)\#_{F_g}....\#_{F_g}E(1)\#_{F_g}(T^2\times \Sigma_g)\#_{F_g}L(p_1,..,p_k)
\]
which leads to the intersection form 
\[
E_8\oplus P_1\oplus E_8\oplus P_2\oplus E_8\oplus...\oplus P_{n-1}\oplus E_8\oplus ({f},\Gamma)\oplus \left(b^+(T^2\times \Sigma_g)-1\right) H
\]

This means the following:
\begin{enumerate}

\item The $E_8$ intersection component is given by the matrix
\[
E_8=\left(\begin{matrix}
-2 & 0 & 0 & 1 & 0 & 0 & 0 & 0 \\
0 & -2 & 1 & 0 & 0 & 0 & 0 & 0 \\
0 & 1 & -2 & 1 & 0 & 0 & 0 & 0 \\
1 & 0 & 1 & -2 & 1 & 0 & 0 & 0 \\
0 & 0 & 0 & 1 & -2 & 1 & 0 & 0 \\
0 & 0 & 0 & 0 & 1 & -2 & 1 & 0 \\
0 & 0 & 0 & 0 & 0 & 1 & -2 & 1 \\
0 & 0 & 0 & 0 & 0 & 0 & 1 & -2
\end{matrix}\right).
\]
This arises from the intersection form on $E(1)$ as $\langle1\rangle\oplus 9\langle -1\rangle=E_8\oplus H'$ where \[
H'=\left(\begin{matrix}
0&1\\1&-1
\end{matrix}\right).
\] 
Hence there are as many $E_8$-terms as there are $E(1)$ summands.
\item Each $P_i$ consists of the two rim pairs $(\mathcal R_i,S_i)$, $\mathcal R_i$ represented by a rim torus, $S_i$ representable by an embedded sphere of self-intersection -2.  It will be convenient to change this pair to $(\mathcal R_i, T_i=\mathcal R_i+S_i)$.   This new pair contributes a copy of $H$ to the intersection form, i.e. each $P_i=H\oplus H=2H$.  Note that if $\alpha_{i,j}=e_j\mathcal R_j+d_jT_j$, then 
\[
\alpha_{i,j}^2=2e_jd_j.
\]
Hence the areas of $\mathcal R_j$ and $T_j$ have the same sign if and only if $\alpha_{i,j}^2>0$.

Rim pairs only arise when the summands on either side contain a $E(m)$ component.

\item The term $({f},\Gamma)$ corresponds to a generic fiber $F=\tau f$, $f$ primitive,  and a "section" class $\Gamma$, this pair has intersection matrix
\[
\left(\begin{matrix}
0&1\\1&\Gamma^2
\end{matrix}\right).
\]
If $M$ has no multiple fibers, then $\Gamma$ can be represented by a smooth section of the fibration and $\Gamma^2=-n$.  

In the following, whenever possible, the generic fiber class $F$ will be used.  In particular, 
\[
\alpha_F=cF+g\Gamma
\]
 and hence $\alpha_F^2=2cg\tau +g^2\Gamma^2$.
 
\item The final term arises from the summation with $T^2\times \Sigma_g$ and contributes $2gH$ to the intersection form.
\end{enumerate}

Thus the intersection form of $E(n,g,p_1,...,p_k)$ can be written more succinctly as
\[
=nE_8\oplus \left[2(n-1)+2g\right]H\oplus \left(\begin{matrix} 0&1\\1 &\Gamma^2\end{matrix}\right).
\]
This decomposition is pairwise orthogonal and given a class $\alpha\in H_2(M)$, we can write
\begin{equation}\label{e:n}
\alpha=\underbrace{\sum_{i=1}^n\alpha_{8,i}}_{E_8\mbox{ terms}}+\alpha_F+\sum_{i=1}^{n-1}\alpha_{P_i}+\alpha_{T^2\times \Sigma_g}=\sum_{i=1}^n\alpha_{8,i}+\sum_{i=1}^{2(n-1)+2g}\alpha_{H,i}+\alpha_F.
\end{equation}
In this notation, each $\alpha_{H,i}=aA+bB$ represents one $H$-term, i.e. the intersection pattern for $A$ and $B$ is given by $H$. 

Once a choice of $M=X_1\#_{F_g}X_2$ has been made, then one of two situations can occur:  If both $X_1$ and $X_2$ have a $E(m)$-type summand, then a certain $P_j$ arises as the rim component of this sum.  The remaining $P_i$ terms lie in either $X_1$ or $X_2$, providing each a $2H$-contribution to the intersection form.  Then \eqref{e:n} becomes
\begin{equation}\label{e:en}
\alpha=\sum_{i=1}^n\alpha_{8,i}+\sum_{i=1}^{2(n-1)+2g-2}\alpha_{H,i}+\underbrace{e_{j,1}\mathcal R_{j,1}+d_{j,1}T_{j,1}+e_{j,2}\mathcal R_{j,2}+d_{j,2}T_{j,2}}_{\alpha_{RT}}+\alpha_F.
\end{equation}
and the decomposition \eqref{e:dec} has
\[
\alpha_{X_k}=\sum_{i=1}^{n_k}\alpha_{8,i}+\sum_{i=1}^{m_k}\alpha_{H,i}
\]
with $n_k\ge 1$, $m_k\ge 0$, $n_1+n_2=n$ and $m_1+m_2=2(n-1)+2g$.

If all the $E(m)$-type components lie in one $X_i$, then the sum involves no rim pairs and then \eqref{e:dec} becomes
\begin{equation}
\alpha=\alpha_{X_1}+\alpha_{X_2}+\alpha_F.
\end{equation}
In this case,
\[
\alpha_{X_1}=\sum_{i=1}^{n}\alpha_{8,i}+\sum_{i=1}^{m_1}\alpha_{H,i}
\]
and
\[
\alpha_{X_2}=\sum_{i=1}^{m_2}\alpha_{H,i}
\] 
with $m_1+m_2=2(n-1)+2g$.

It will be convenient to write the class $\alpha$, or parts of it under consideration, in vector notation:  For example, 
\[
\alpha_{RT}=e_{j,1}\mathcal R_{j,1}+d_{j,1}T_{j,1}+e_{j,2}\mathcal R_{j,2}+d_{j,2}T_{j,2}=(e_{j,1},d_{j,1},e_{j,2},d_{j,2}).
\]
Further, as the precise choice of $P_j$ will not be relevant, this will further be shortened to
\[
\alpha_{RT}=(e_{1},d_{1},e_{2},d_{2}).
\]

The aim of this note is to determine which classes $\alpha$ can be represented by symplectic forms on $M$.  The underlying tactic is to use the decomposition of $M$ as a fiber sum $X\#_FY$ to answer this question by relating $\alpha$ to symplectic classes $\alpha_X$ and $\alpha_Y$ on $X$ and $Y$.  As the decomposition in \eqref{e:en} shows, an additional issue is the presence of rim components.  In the following, these issues will be first addressed at a homological level (see below and Section \ref{s:3}) and then at a geometric level (see Def. \ref{d:sbal} and Def. \ref{d:pfc} in Section \ref{s:4}).

Initially, there are three straightforward aspects that need to be considered:  First, a very basic criterion for $\alpha$ to be symplectically representable with respect to the fiber sum is that $\alpha^2>0$ and $\alpha\cdot F>0$.  This motivates the following definition.

\begin{definition} The {\it positive cone} is 
\[
\mathcal P_M=\{\alpha\in H^2(M,\mathbb R)|\;\alpha^2>0\}
\]
and for a nonzero class $A\in H_2(M,\mathbb Z)$, the {\it relative positive cone} is
\[
\mathcal P_M^A=\{\alpha\in H^2(M,\mathbb R)|\;\alpha^2>0,\;\alpha\cdot A>0\}
\]
and $\mathcal P_M^0=\mathcal P_M$.
\end{definition}

In relation to the conjectures of the introduction, note that if $\kappa(M)=1$, then \ref{e:sasi} shows that the relative positive cones for $K_{min}$ and for $F$ are identical.

Secondly, if $\alpha$ has a non-vanishing $\alpha_{RT}$ term, then $\alpha$ will not directly be described by only terms in $X$ and $Y$.  In this case $\alpha_{RT}$ will need to have a specific form, this leads to the concept of a balanced class.

\begin{definition} \label{d:bal}Let $M=X_1\#_{F_g}X_2$ be an elliptic surface and $\alpha\in\mathcal P^F_M$.  Then $\alpha$ is balanced with respect to $(X_1,X_2)$ if either 
\begin{enumerate}
\item the sum has no rim pairs or 
\item if there are the rim pairs generated in the sum of $X_1$ and $X_2$, then the decomposition of $\alpha$ given by \ref{e:en} satisfies:
\begin{enumerate}
\item $e_i\cdot d_i> 0$ or $e_i=d_i= 0$ for $i\in\{1,2\}$ and
\item $\alpha^2-2e_1\cdot d_1-2e_2\cdot d_2>0$.
\end{enumerate}
\end{enumerate}
\end{definition}

Finally, the criterion $\alpha^2>0$ will need to hold for a splitting of 
\[
\alpha-\alpha_{RT}=\alpha_{X_1}+\alpha_{X_2}+\alpha_{F_{X_1}}+\alpha_{F_{X_2}},
\]
 i.e. $\alpha_{X_i}^2+\alpha_{F_{X_i}}^2>0$ for $i\in\{1,2\}$.  This homological sum balanced criterion (see Def \ref{d:sbal}) can be attained by a correct choice of splitting $\alpha_F=\alpha_{F_{X_1}}+\alpha_{F_{X_2}}$.
 
The geometric arguments of Section \ref{s:4} build on these basic homological properties to ensure that a given class can be represented symplectically.  In particular, sum balanced ensures that a class is split into two symplectic classes and partially fibration compatible at $F_g$ ensures that the geometric argument of Theorem \ref{t:sym} can be properly executed.

\section{Diffeomorphism Groups and Homology Actions for Elliptic Surfaces with Positive Euler Number}\label{s:3}

A self-diffeomorphism of $M$ always induces an automorphism of $H_2(M,\mathbb Z)$, and by extension of $H_2(M,\mathbb R)$.  The converse is unfortunately not always the case.  In the following we describe automorphisms which are shown to cover a self-diffeomorphism of $M$.  The results in this section are valid for any elliptic surface, including those with multiple fibers.

\begin{definition}
Two classes $\alpha,\alpha'\in H_2(M,\mathbb R)$ are equivalent if there exists an automorphism in the image of $Diff^+(M)$ mapping one to the other.
\end{definition}

\subsection{Automorphism Groups of Elliptic Surfaces }  For relatively minimal elliptic surfaces with positive Euler number, the image of $Diff^+(M)$ in $Aut(H_2(M,\mathbb Z))$ (modulo torsion) is rather well understood.  

\begin{definition}\label{d:aut}(\cite{Lo}, \cite{FM2}, \cite{H2}) Let $M$ be an elliptic surface.\begin{enumerate}
\item Denote $\overline{H}_2(M)$ the second homology $H_2(M,\mathbb Z)$ modulo torsion.
\item Denote by $O$ the orthogonal group of automorphisms of $\overline{H}_2(M)$ which preserve the intersection form.
\item Let $k$ be a canonical divisor on $M$.  Denote by $O_k\subset O$ the automorphisms which fix $k$.  If $k=0$, then $O_k=O$.
\item For an element $\phi\in O$, define its spinor norm to be $\pm 1$ depending on whether $\phi$ preserves or reverses the orientation of a maximal positive definite subspace of $H_2(M,\mathbb R)$.  The spinor norm is a group homomorphism, i.e. the spinor norm of $\phi\circ \psi$ is the product of the respective spinor norms.
\item Denote by $O'\subset O$ the subgroup of elements of spinor norm 1.
\end{enumerate}
\end{definition}

\begin{theorem} \label{t:lo}\cite{Lo} Let $M$ be a relatively minimal elliptic surface with positive Euler number and $k$ the canonical class.  Then the image of $Diff^+(M)$ in $O$ is $O'$ if $M=E(2)$ and contains $O_k'$ otherwise. 
\end{theorem}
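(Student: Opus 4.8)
The statement to prove is Theorem~\ref{t:lo}, which describes the image of $Diff^+(M)$ in the orthogonal group $O$ of $\overline H_2(M)$ for a relatively minimal elliptic surface $M$ with positive Euler number. This is attributed to Loebbius \cite{Lo}, so the proof plan is really to assemble known pieces from the literature on diffeomorphisms of elliptic surfaces.

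\textbf{Outline of the plan.} The plan is to proceed by reducing to the standard models $E(n,g,p_1,\dots,p_k)$ via Theorem~\ref{t:class}, and then to realize enough automorphisms by explicit diffeomorphisms, while simultaneously showing that no diffeomorphism can escape the target subgroup.

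First I would establish the upper bound: every element of the image of $Diff^+(M)$ in $O$ lies in $O'_k$ (resp.\ $O'$ when $M=E(2)$). For the spinor norm constraint, I would invoke the fact --- a consequence of Seiberg--Witten theory / Donaldson theory --- that an orientation-preserving diffeomorphism of a $4$-manifold with $b^+>1$ acts on $H^2$ preserving the orientation of a maximal positive-definite subspace; for $b^+=1$ one uses the light cone lemma together with the fact that $Diff^+$ preserves the forward cone (this is why the $b^+=1$ cases need separate care, but those are excluded here since positive Euler number with $b^+=1$ still allows only $E(1)$, which must be handled). For the fixed-class constraint, I would use that a diffeomorphism permutes the Seiberg--Witten basic classes, and when $\kappa(M)=1$ (i.e.\ $\delta>0$, $K_{min}\ne 0$) the canonical class $k$ is characterized up to sign as (a multiple of) the SW basic class, and orientation-preservation pins down the sign; when $k=0$ there is nothing to fix, which is why $O_k=O$ in that case. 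The special case $M=E(2)$ ($K3$) has $k=0$ so only the spinor norm survives, giving $O'$; here one must moreover know the image is \emph{exactly} $O'$, which is a theorem of Friedman--Morgan / Donaldson.

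Second, for the lower bound $O'_k\subseteq \mathrm{im}(Diff^+(M))$, I would produce diffeomorphisms generating $O'_k$. The key geometric input is the fiber sum description $M=E(1)\#_{F_g}\cdots\#_{F_g}E(1)\#_{F_g}(T^2\times\Sigma_g)\#_{F_g}L(p_1,\dots,p_k)$ and the associated orthogonal splitting of the intersection form $nE_8\oplus [2(n-1)+2g]H\oplus\begin{pmatrix}0&1\\1&\Gamma^2\end{pmatrix}$ from the excerpt. On each $E(1)$ summand one uses the well-known fact that $Diff^+(E(1))$ realizes all of $O'$ of its intersection form $\langle 1\rangle\oplus 9\langle -1\rangle$ (via Cremona-type transformations and diffeomorphisms of $\#9\overline{\mathbb{CP}^2}$), and fiber sums of these act compatibly. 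The $H$-summands and the rim tori are handled by the ``Gluck twist / fiber-sum swapping'' diffeomorphisms and monodromy diffeomorphisms of torus bundles; the logarithmic-transform regions contribute automorphisms studied in \cite{FM2}. One then checks that the group generated by all these explicit diffeomorphisms, together with the elementary transvections along $(-2)$-spheres (which exist abundantly in the $E_8$ and $H$ blocks and in the fibers), is all of $O'_k$ --- this is essentially a reflection-group / strong-approximation statement for the indefinite lattice, of the type $O'(L)$ being generated by reflections in $(-2)$-vectors when $L$ contains enough hyperbolic summands.

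\textbf{Main obstacle.} The hardest part is the lower bound when multiple fibers are present: the lattice is no longer unimodular and $\Gamma$ is no longer primitive, so the clean generation-by-$(-2)$-reflections argument must be replaced by a more delicate analysis of which reflections and which monodromy diffeomorphisms of $L(p_1,\dots,p_k)$ are realizable, and one must verify these exactly generate the stabilizer $O'_k$ of the canonical class \eqref{e:k}. This is precisely the technical heart of \cite{Lo} (with the unimodular case going back to Friedman--Morgan \cite{FM2}), and a genuinely new argument --- rather than just citing --- would require carefully tracking how the realizable reflections interact with the fixed vector $k$ in the indefinite lattice, using the fact that for rank $\ge 3$ indefinite lattices the stabilizer of a primitive vector is still generated by reflections up to finite index issues that the spinor-norm-$1$ condition controls.
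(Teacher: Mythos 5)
This statement is quoted from L\"onne \cite{Lo}; the paper offers no proof of its own beyond the citation and the remark (immediately after the theorem) that the realized automorphisms are generated by reflections on smoothly embedded $-2$-spheres. So there is no internal argument to compare yours against, and your sketch of the lower bound --- splitting the intersection form according to the fiber-sum decomposition, realizing reflections in $-2$-vectors of the $E_8$ and $H$ blocks by diffeomorphisms supported away from a fiber, and then invoking a lattice-theoretic generation statement for $O'_k$ --- is consistent with the route the paper attributes to \cite{Lo} and with Theorem~\ref{t:2}. You correctly flag that the genuinely hard part (non-unimodular lattices in the presence of multiple fibers, and exact generation of the stabilizer) is the content of \cite{Lo} itself rather than something reproducible in a page.

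One substantive error, though it does not sink the proof of the statement as written: your proposed ``upper bound,'' namely that the image of $Diff^+(M)$ is \emph{contained} in $O'_k$, is both unnecessary and false in general. The theorem only asserts the containment $O'_k\subseteq \mathrm{im}(Diff^+(M))$ for $M\ne E(2)$; no upper bound is claimed there, and indeed none of the form you state can hold, because Seiberg--Witten theory only distinguishes the pair $\{k,-k\}$, and orientation-preserving diffeomorphisms sending $k$ to $-k$ exist (e.g.\ the involution induced by a real structure on $E(n)$ is orientation-preserving on the underlying $4$-manifold but pulls $k$ back to $-k$). So ``orientation-preservation pins down the sign'' is not correct. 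The only place an upper bound is actually needed is $M=E(2)$, where $k=0$ and the claim is that the image is exactly $O'$; there your appeal to Donaldson's spinor-norm theorem for the $K3$ surface is the right ingredient. You should delete the general upper-bound step and restrict that part of the argument to the $E(2)$ case.
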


In \cite{Lo} a detailed construction of the image is undertaken.  It is shown, that the induced automorphisms are generated by reflections on spheres of self-intersection $-2$.  Reflection along a smoothly embedded sphere $S$ with $S^2=-2$ induces a diffeomorphism of $M$ (III.Prop 2.4, \cite{FM1}), on homology the map is given by
\[
r_{S}(B)=B-2\frac{B\cdot S}{S\cdot S}S=B+(B\cdot S)S.
\]  
Moreover, this diffeomorphism is identity outside any neighborhood of the sphere. A broad class of such spheres is given by the following Theorem.

\begin{theorem}\label{t:2}\cite{Lo} If $M$ is a relatively minimal elliptic surface with non-vanishing Euler number and $F_g$ a generic fiber, then every class in $\overline{H}_2(M\backslash F_g)$ of square $-2$ is represented by a sphere smoothly embedded in $M\backslash F_g$.  Moreover, every reflection on such a class is realized by a diffeomorphism which is identity on a neighborhood of $F_g$.  This  includes all automorphisms of spinor norm one.
\end{theorem}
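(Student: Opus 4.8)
The plan is to reduce the whole statement to the standard building blocks of the fibre sum plus two purely lattice‑theoretic facts. Since the Euler number is nonzero, Theorem~\ref{t:class} (and the discussion preceding it) gives a fibre‑sum decomposition
\[
M=\underbrace{E(1)\#_{F_g}\cdots\#_{F_g}E(1)}_{n}\#_{F_g}(T^2\times\Sigma_g)\#_{F_g}L(p_1,\dots,p_k),
\]
and after isotoping $F_g$ into, say, the last summand we may view $M\setminus F_g$ as glued along copies of $T^3$ out of $E(1)\setminus\nu(F_g)$ ($n$ times), $(T^2\times\Sigma_g)\setminus\nu(F_g)\cong T^2\times\Sigma_{g,1}$ (here $\Sigma_{g,1}$ is $\Sigma_g$ with an open disk removed), and $L(p_1,\dots,p_k)\setminus\nu(F_g)$. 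First I would analyze $\overline{H}_2(M\setminus F_g)$ by Mayer--Vietoris: using that each $E(1)$‑summand may be taken to carry a fibre of type $II^*$ (whose components span an affine $\widetilde E_8$, i.e. a copy of $E_8$ together with the null class $F$), one finds that $\overline{H}_2(M\setminus F_g)$ has radical containing $F$ and the boundary $T^3$‑tori, and that its quotient by the radical is isometric to $nE_8\oplus[2(n-1)+2g]H$ — the orthogonal complement in $\overline{H}_2(M)$ of the block $\langle f,\Gamma\rangle$. In particular every class of square $-2$ descends to a $(-2)$‑vector of $nE_8\oplus[2(n-1)+2g]H$. Alongside this I would fix a finite collection $\mathcal S_0$ of $(-2)$‑classes \emph{visibly} represented by embedded spheres in $M\setminus\nu(F_g)$: the eight $(-2)$‑sphere components of a type $II^*$ fibre in each $E(1)$‑summand, the $(-2)$‑sphere components of reducible fibres, and the vanishing‑cycle spheres created in the fibre summations.

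The theorem then follows from two inputs about the isometry group of this lattice. \textbf{(L1)} The subgroup $W_0$ generated by the reflections $r_S$, $S\in\mathcal S_0$, acts transitively on the set of $(-2)$‑vectors of $\overline{H}_2(M\setminus F_g)$. \textbf{(L2)} The subgroup generated by reflections in \emph{all} $(-2)$‑vectors is exactly the subgroup $O_k'$ of isometries of spinor norm $+1$ fixing a canonical class $k$ (and equals $O'$ when $k=0$). Both are instances of the classical structure theory of indefinite integral lattices: (L1) by an Eichler/Wall‑type transitivity argument, valid because the lattice contains $E_8$, and $E_8\oplus H$ as soon as $2(n-1)+2g\ge 1$, once $\mathcal S_0$ is checked to be large enough; (L2) is the analogue, for these elliptic‑surface lattices, of the statement underlying the $K3$ Torelli theorem. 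Granting (L1): an arbitrary class $v$ with $v^2=-2$ is of the form $v=w(S_0)$ with $w\in W_0$, $S_0\in\mathcal S_0$. Each generating reflection $r_S$ is realized, by III.~Prop.~2.4 of \cite{FM1}, by a self‑diffeomorphism of $M$ supported in a plumbing neighbourhood of the sphere $S\subset M\setminus\nu(F_g)$, hence equal to the identity near $F_g$; composing, $w$ is realized by a diffeomorphism $\Phi$ that is the identity near $F_g$. Then $\Phi(S_0)$ is an embedded $(-2)$‑sphere in $M\setminus F_g$ representing $v$, proving the first assertion, and $r_v=\Phi\,r_{S_0}\,\Phi^{-1}$ is realized by $\Phi\circ(\text{realization of }r_{S_0})\circ\Phi^{-1}$, again the identity near $F_g$, proving the second.

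It remains to identify the group so realized with the isometries of spinor norm one. If $S^2=-2$ and $S\subset M\setminus F_g$, then $S$ can be pushed off a parallel copy of $F_g$, so $S\cdot F=0$ and hence $S\cdot K_{min}=\delta(S\cdot F)=0$; by the reflection formula $r_S(k)=k+(k\cdot S)S=k$, so $r_S\in O_k$. A reflection in a vector of negative square fixes a maximal positive‑definite subspace pointwise, hence has spinor norm $+1$; thus the realized group lies in $O_k'$. Conversely, by the first assertion every $(-2)$‑vector of $\overline{H}_2(M\setminus F_g)$ is represented by a sphere in $M\setminus F_g$, so by (L2) the realized group contains all of $O_k'$, whence equality. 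When $M=E(2)$ one has $K_{min}=0$, so $O_k=O$ and the realized group is all of $O'$ — the source of the $E(2)$‑clause of Theorem~\ref{t:lo}. Comparing with Theorem~\ref{t:lo}, the reflections in $(-2)$‑spheres contained in $M\setminus F_g$ thus realize the image of $Diff^+(M)$ in $O$ when $M=E(2)$, and exactly $O_k'$ inside that image otherwise; in either case every isometry of spinor norm one is realized, and by a diffeomorphism supported away from $F_g$.

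The principal obstacle is (L1)--(L2) together with the construction of $\mathcal S_0$, and the fact that one is only allowed reflections in $(-2)$‑classes \emph{a priori} known to be spherical forces a bootstrap: start with an explicit, necessarily small $\mathcal S_0$ of visibly spherical $(-2)$‑classes, prove that the $r_S$, $S\in\mathcal S_0$, already act transitively on $(-2)$‑vectors via the indefinite‑lattice theory, deduce that \emph{every} $(-2)$‑vector is spherical, and only then invoke (L2) — the argument must be arranged so it does not circularly assume what it proves. The most delicate piece is the summand $T^2\times\Sigma_g$: it is aspherical, hence contains no essential $2$‑sphere at all, and reflections in the $E_8$‑spheres of the $E(1)$‑summands do not move classes out of their $E_8$‑blocks; so the $(-2)$‑classes attached to the $2g$ hyperbolic blocks must be produced entirely from vanishing cycles created in the fibre summation, and finding enough such spheres to link the $E(1)$‑part with the $T^2\times\Sigma_g$‑part is what makes $W_0$ large enough. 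Finally one must keep track of the parity of the form ($nE_8\oplus[2(n-1)+2g]H$ is even exactly when $n$ is even) and, in the presence of multiple fibres, of the fact that $F=\tau f$ is no longer primitive; neither affects the scheme above, since the diffeomorphisms involved are supported away from $F_g$ and, as noted, the arguments here are purely topological.
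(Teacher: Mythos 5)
The paper offers no proof of this statement: Theorem \ref{t:2} is imported verbatim from L\"onne \cite{Lo}, so there is no internal argument to compare yours against. Your outline is a faithful reconstruction of the strategy of the cited source (building on Friedman--Morgan): produce explicit $(-2)$-spheres from singular-fibre components and the vanishing cycles created in the fibre sums, use Eichler/Wall-type lattice transitivity to show the resulting reflection group sweeps out all $(-2)$-vectors, and identify that group with $O_k'$ (or $O'$ when $k=0$) via the spinor norm. Note, however, that essentially all of the substance lives in (L1), (L2) and the verification that $\mathcal S_0$ is large enough --- in particular that enough vanishing-cycle spheres exist to connect the $E(1)$-blocks to the $[2(n-1)+2g]H$-blocks coming from $T^2\times\Sigma_g$ --- and these you assert rather than prove; so what you have is a correct road map with the load-bearing steps deferred to the literature, which is precisely the status the statement already has in the paper.
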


The constructions below map a given class $\alpha\in\mathcal P^F_M$ using a finite sequence of automorphisms to a special class.  This will be done using either reflections on $-2$-spheres or maps of spinor norm one.  Thus, the underlying diffeomorphism of the composition avoids a neighborhood of a fiber, it is in this neighborhood that the blow-up locus is assumed to be located.

There are now two pathways to construct automorphisms in the image of $Diff^+(M)$:\begin{enumerate}
\item Identify a $-2$-sphere and reflect on it.  Theorem \ref{t:2} implies that any class of square $-2$ which is in
\[
nE_8\oplus \left[2(n-1)+2g\right]H
\] 
is represented by a smoothly embedded sphere.  

Note that if $n$ is even, then the intersection pairing given by $(F,\Gamma)$ is equivalent to $H$.
\item Construct a map that preserves the intersection form, has spinor norm 1 and, if needed, preserves the canonical class, i.e. which lies in $O_k'$.  If $M\ne E(2)$, then preserving the canonical class is equivalent to preserving the fiber $F$, see Lemma \ref{l:sign}.
\end{enumerate}

\subsection{Explicit Automorphisms of Elliptic Surfaces}  

Not every class $\alpha\in \mathcal P^{F}_M$ is balanced, the goal is to find automorphisms in $O_k'$ of $M$ (or $O'$ if $M=E(2)$) that map the  class $\alpha$ to an equivalent balanced class.  This section describes the automorphisms that will be used to achieve this.

The first three maps will often be used to re-organize classes representing an $H$ or $2H$-term in the intersection form.

\subsubsection{Reflection on $H$}

Let $(A,B)$ be a pair with intersection form given by $H$.  Then $B-A$ squares to $-2$.  The map on homology only affects $aA+bB$.  This map acts by 
\[
aA+bB\;\;\mapsto\;\; bA+aB.
\]

This map will be used on the following two pairs:\begin{enumerate}
\item A rim pair $(\mathcal R, T)$, the underlying vanishing sphere $S=T-\mathcal R$ is a smoothly embedded $-2$-sphere.
\item A pair of tori $(T_1, T_2)$ arising in $T^2\times \Sigma_g$ as a summand in an elliptic surface with positive Euler number.  These are disjoint from the fiber, hence by Theorem \ref{t:2}, $T_i-T_j$ is represented by a smoothly embedded $-2$-sphere in the complement of a generic fiber.
\end{enumerate}

\subsubsection{Q-map} (Lemma 2.5, \cite{H2})  Any map $Q$ which acts by $-id$ on two $H$-components and by identity on the remainder of the class has spinor norm 1.  As it leaves the fiber class unchanged, this map covers a self-diffeomorphism of $M$.

A map $Q$ which only acts by $-id$ on one $H$-component and identity otherwise has spinor norm -1.  This map can be used to adjust any map from spinor norm -1 to 1 at the cost of sign changes on $H$.

\subsubsection{Interchange Map}

Let $(A_1,B_1,A_2,B_2)$ generate a $2H$ term in the intersection form.  Define a map by
\[
\begin{array}{lcl}
A_1&\mapsto &A_2\\
B_1&\mapsto & B_2\\
 A_2&\mapsto& -A_1\\
B_2&\mapsto &-B_1
\end{array}
\]
 which otherwise acts by identity. This map lies in $O_{k}'$.  This can be seen as the naive interchange map $(a_1,b_1,a_2,b_2)\mapsto (a_2,b_2,a_1,b_1)$ has spinor norm one (\cite{H3}) and then apply a Q-map.  Alternatively, this map arises from repeated applications of Lemma\ref{l:rp} below and reflections on $-2$-spheres.   
 
The interchange map can be applied to any two pairs of tori arising in an elliptic surface with positive Euler number.

\subsubsection{Automorphism of the lattice $2H$}  

Let $(A_1,B_1,A_2,B_2)$ generate a $2H$ term in the intersection form.

\begin{lemma}\label{l:rp} Let $M$ be an elliptic surface and $i\in\mathbb Z$.  The automorphism of $H_2(M,\mathbb Z)$ defined by
\[
\begin{array}{lcl}
A_1&\mapsto &A_1-iA_2\\
B_1&\mapsto & B_1\\
 A_2&\mapsto& A_2\\
B_2&\mapsto &B_2+iB_1
\end{array}
\]
and otherwise acting by identity is induced by a self-diffeomorphism of $M$.
\end{lemma}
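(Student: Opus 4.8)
The plan is to exhibit $\phi$ (the displayed automorphism, for a fixed $i$) as an element of the geometric automorphism group $O'_k$ and then to realize it by a self-diffeomorphism using the structural results of this section, with an explicit construction handling the $\chi(M)=0$ case.

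First I would record the elementary facts. The map $\phi$ is the identity off the $2H$-summand $\langle A_1,B_1,A_2,B_2\rangle$, and a one-line check on that summand shows it preserves the intersection form — the only potentially bad pairing, $\phi(A_1)\cdot\phi(B_2)=(A_1-iA_2)\cdot(B_2+iB_1)$, vanishes because the contributions $+i(A_1\cdot B_1)$ and $-i(A_2\cdot B_2)$ cancel. Since $\phi-\mathrm{id}$ sends $A_1\mapsto -iA_2$, $B_2\mapsto iB_1$ and annihilates $A_2,B_1$, it is nilpotent with square zero; hence $\phi$ is unipotent, lies in the identity component of the orthogonal group of the $(2,2)$-subspace $\langle A_1,B_1,A_2,B_2\rangle_{\mathbb R}$, and so has spinor norm $+1$. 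Finally, in the decompositions of this section a $2H$-summand arises from a rim pair or from a pair of tori in a $T^2\times\Sigma_g$-summand, and in either case the fiber class $F$ — hence the canonical divisor $\mathcal K_{min}$, which is a combination of fiber-type classes — lies in its orthogonal complement and is therefore fixed by $\phi$. Thus $\phi\in O'_k$.

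Next, since $A_2$ and $B_1$ are fixed, $\phi$ equals the $i$-th power of the $i=1$ instance $\phi_1$, so it suffices to realize $\phi_1$ (and its inverse) by a diffeomorphism. If $M$ is relatively minimal with $\chi(M)\neq 0$, Theorem \ref{t:lo} gives that the image of $\mathrm{Diff}^+(M)$ in $O$ contains $O'_k$ (it equals $O'\supseteq O'_k$ when $M=E(2)$), so $\phi_1$ is realized; more precisely, by Theorem \ref{t:2} it is a composition of reflections on smoothly embedded $(-2)$-spheres in $M\setminus F_g$, and the resulting diffeomorphism is the identity near $F_g$. Note $\phi_1$ is \emph{not} a product of two reflections on $(-2)$-classes lying in the $2H$-summand alone — its nilpotent part has rank two — so the factorization also uses $(-2)$-classes of adjacent $H$ or $E_8$ summands, all of which still lie in $\overline{H}_2(M\setminus F_g)$. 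If $M$ is not minimal, $\phi_1$ acts trivially on the exceptional summands, so one applies the minimal case to the minimal model and extends by the identity over the exceptional spheres, which may be taken in a ball near $F_g$.

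It remains to treat $\chi(M)=0$, where $M$ is built from a $T^2$-bundle over $\Sigma_g$. Here one constructs the diffeomorphism by hand: in the relevant $2H$-summand one locates an embedded torus $\tau$ with $\tau^2=0$, disjoint from $F_g$, and performs the standard twist in a tubular neighborhood $\tau\times D^2$ along an appropriate curve $c\subset\tau$, extended by the identity. A Mayer--Vietoris computation for $M=(M\setminus\tau\times D^2)\cup(\tau\times D^2)$ identifies the induced map on $H_2$ with a transvection of exactly the stated shape for $i=\pm1$; composing copies gives all $i$. I expect the main obstacle to be precisely this last step: choosing the torus and curve inside the correct $2H$-summand of the torus bundle, keeping the support away from $F_g$, and verifying that the homology action is $\phi$ on the nose rather than a sign-twisted variant — this is where the explicitly constructed diffeomorphisms of the $\chi=0$ case do their work. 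A lesser obstacle is the explicit reflection bookkeeping in the $\chi\ne0$ case once one is forced to leave the $2H$-summand.
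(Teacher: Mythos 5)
Your proof is correct for the cases in which the lemma is actually used, and it follows the same route the paper intends: the paper's own ``proof'' is a one-line citation to Lemma 5.1 of \cite{H}, whose mechanism is exactly your pathway --- verify that the map is an isometry of spinor norm one fixing the canonical class (hence lies in $O_k'$) and invoke L\"onne's theorem (Theorem \ref{t:lo}), with the blow-down reduction for the non-minimal case as in Corollary \ref{c:bl}. The one place you go beyond the paper is the $\chi(M)=0$ case, where your torus-twist construction is only a sketch with a self-acknowledged gap; note, however, that the paper never invokes Lemma \ref{l:rp} there --- Section \ref{s:50} builds the analogous $2H$-type maps for $T^4$ and the Kodaira--Thurston manifolds explicitly from $SL(4,\mathbb Z)$ and Dehn twists --- so this does not affect the arguments that rely on the lemma. (A small aside: your parenthetical that $\phi_1$ is not a product of two reflections is true, but the cleaner reason is that the image of $\phi_1-\mathrm{id}$ is the totally isotropic plane $\langle A_2,B_1\rangle$, which contains no $(-2)$-classes.)
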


\begin{proof}
This is proven identically to Lemma 5.1, \cite{H}. 
\end{proof}

The action of this map is
\[
(a_1,b_1,a_2,b_2)\;\mapsto\;(a_1,b_1+ib_2,a_2-ia_1,b_2).
\]
This map shifts volume from one pair to the other while also changing the area on one term in each pair.  

This can be applied to any two pairs of (rim) tori.

If $\Gamma^2=2m$ is even, then the pair $(F, \Gamma-mF=W)$ has $H$ as its intersection form.  Then the map in Lemma \ref{l:rp} can be applied, however note that this will not preserve the fiber class.

\begin{lemma}\label{l:e2}  Let $M$ be an elliptic surface with a given fibration having $F_g$ as a generic fiber and $i\in\mathbb Z$.  Let $\alpha\in H_2(M,\mathbb Z)$ be of the form 
\[
\alpha=\alpha_0+aA+bB+wF+gW.
\]  
The automorphism of $H_2(M,\mathbb Z)$ defined by Lemma \ref{l:rp} sending $\alpha$ to the class  
\[
\tilde \alpha=\alpha_0+(a+iw)A+bB+w\tilde F+(g-ib)W
\]
has spinor norm one and changes the fiber class.
\end{lemma}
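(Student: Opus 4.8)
The plan is to obtain Lemma~\ref{l:e2} as an immediate specialization of Lemma~\ref{l:rp}: the automorphism in the statement will be exactly the one provided by that lemma, applied to the rank-four sublattice spanned by the $H$-pair $(A,B)$ and the $H$-pair $(F,W)$, with the four generators matched up and the integer $i$ signed so that the induced change of coefficients on $\alpha$ is precisely the one displayed.

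First I would set up the lattice. Because $\Gamma^2=2m$ is even, the remark preceding Lemma~\ref{l:rp} gives $F^2=W^2=0$ and $F\cdot W=1$ for $W=\Gamma-mF$, so $\langle F,W\rangle$ is a copy of $H$; it is the $\langle F,\Gamma\rangle$-block of the intersection form and hence orthogonal to the $H$-summand carrying $(A,B)$. Thus $L:=\langle A,B\rangle\oplus\langle F,W\rangle$ is an orthogonal $2H$-summand of $\overline H_2(M)$ with $\alpha_0\in L^\perp$, and Lemma~\ref{l:rp} may be invoked on $L$. Choosing the identification $(A_1,B_1,A_2,B_2)=(B,A,W,F)$ and using the given $i$, that lemma produces an automorphism $\phi$ covered by a self-diffeomorphism of $M$, equal to the identity on $L^\perp$, fixing $A$ and $W$, and acting by $B\mapsto B-iW$, $F\mapsto F+iA$. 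Reading off the coefficients of $\phi(\alpha)$ then yields $\tilde\alpha=\alpha_0+(a+iw)A+bB+wF+(g-ib)W$; recording the new fiber class $\tilde F:=\phi(F)=F+iA$ puts this in the stated form, and since $\tilde F\ne F$ the underlying diffeomorphism changes the fiber class.

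Two small points remain. The spinor norm is one: Lemma~\ref{l:rp} only claims realizability by a diffeomorphism, so I would add the short observation that $\phi$ is unipotent --- on $L$ one has $(\phi-\mathrm{id})^2=0$ and $\phi$ is the identity on $L^\perp$, so $N:=\phi-\mathrm{id}$ is a nilpotent element of the Lie algebra of the orthogonal group and $t\mapsto\mathrm{id}+tN$ is a path in that group from the identity to $\phi$; since the spinor norm is locally constant it equals $+1$ (alternatively one invokes the same fact used for the interchange map, that the maps of Lemma~\ref{l:rp} lie in $O'$). The only genuinely delicate step is the bookkeeping in the previous paragraph: one must choose the assignment of the four generators and the sign of $i$ correctly, check that $\langle A,B,F,W\rangle$ is a true orthogonal $2H$-\emph{summand} (so that Lemma~\ref{l:rp} genuinely applies, not merely a $2H$-sublattice), and keep straight whether $\tilde\alpha$ is being expressed in the original basis $\{A,B,F,W\}$ or in the basis $\{A,B,\tilde F,W\}$ adapted to the new fibration.
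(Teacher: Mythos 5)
Your proposal takes the same route as the paper: both deduce the lemma from Lemma \ref{l:rp} applied to the orthogonal $2H$-lattice $\langle A,B\rangle\oplus\langle F,W\rangle$, and your coefficient computation $(a,b,w,g)\mapsto(a+iw,\,b,\,w,\,g-ib)$ is exactly the paper's. Your justification of the spinor-norm claim (unipotence of $\phi$, hence a path $\mathrm{id}+tN$ in the real orthogonal group along which the spinor norm is constant) is in fact more explicit than the paper's, which simply asserts that Lemma \ref{l:rp} provides a map of spinor norm one.

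The point you flag as delicate is, however, where your write-up slips, and it is precisely the point the paper's proof is organized around. You compute $\phi(\alpha)=\alpha_0+(a+iw)A+bB+wF+(g-ib)W$ in the \emph{original} basis and then declare $\tilde F:=\phi(F)=F+iA$; but substituting that $\tilde F$ into the displayed expression for $\tilde\alpha$ yields $\alpha_0+(a+2iw)A+bB+wF+(g-ib)W\ne\phi(\alpha)$, while expanding $\phi(\alpha)$ in the basis $\{A,B,F+iA,W\}$ gives $A$-coefficient $a$ rather than $a+iw$; neither reading reproduces the statement. The paper instead interprets the automorphism of Lemma \ref{l:rp} as a change of basis: $\alpha$ is left fixed and re-expanded in the new basis $\bigl(\tilde F=F-iA,\;W,\;A,\;\tilde B=B+iW\bigr)$, in which its coefficients are $(w,\;g-ib,\;a+iw,\;b)$ --- the ``$B$'' appearing in the statement of $\tilde\alpha$ is really $\tilde B$. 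So the new fiber class is $F-iA$, the image of $F$ under $\phi^{-1}$ rather than under $\phi$, and this choice of direction is the one the later applications of the lemma rely on. With that correction your argument is complete.
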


\begin{proof}  Lemma \ref{l:rp} provides a map of spinor norm one as follows:
\[
\begin{array}{lcl}
A_1&\mapsto &A_1-iA_2\\
B_1&\mapsto & B_1\\
 A_2&\mapsto& A_2\\
B_2&\mapsto &B_2+iB_1
\end{array}
\]
and otherwise acting by identity is induced by a self-diffeomorphism of $M$.  Previously, this map has been applied in the same basis of $H_2(M)$ in domain and codomain.  Now, view this map as a change of basis map.  The basis elements $(A_1,B_1,A_2,B_2)$ get mapped to a new basis 
\[
(\tilde A_1=A_1-iA_2,\tilde B_1=B_1,\tilde A_2=A_2,\tilde B_2=B_2+iB_1).
\]
In this new basis, $A_1=\tilde A_1+i\tilde A_2$ and $B_2=\tilde B_2-i\tilde B_1$.  This means
\[
a_1A_1+b_1B_1+a_2A_2+b_2B_2 = a_1\tilde A_1+(b_1-ib_2)\tilde B_1+(a_2+ia_1)\tilde A_2+b_2\tilde B_2.
\]

Applying this to $(A,B)$ and $(F,\Gamma)=(A_1,B_1)$ leads to the claim.
\end{proof}

Let $M$ be diffeomorphic to $E(2)$, then by Theorem \ref{t:lo}, automorphisms covering self-diffeomorphisms of $M$ no longer need to preserve the fiber class, but still need to have spinor norm one.  This result will be applied in that setting.

\subsubsection{H-Fiber Map} Let $(A,B)$ generate an $H$-term.  The following map is related to the $2H$-map as in Lemma \ref{l:rp}, but depending on the parity of $\Gamma^2$ may not arise from $2H$-terms.  Due to the restrictions arising from Theorem \ref{t:lo}, this is the only map involving the fiber $F$.

\begin{lemma}\label{l:fi} Let $M$ be an elliptic surface and $i\in\mathbb Z$.  The automorphism of $H_2(M,\mathbb Z)$ defined by
\[
\begin{array}{lcl}
f&\mapsto &f\\
\Gamma&\mapsto & \Gamma+iB\\
A&\mapsto& A-if\\
B&\mapsto &B
\end{array}
\]
 and otherwise acting by identity is induced by a self-diffeomorphism of $M$.

\end{lemma}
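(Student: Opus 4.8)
The plan is to realize the stated map $\phi$ as an element of $O_k'$ and then appeal to the realization results already in hand, filling the gap they leave by a direct reduction to Lemma~\ref{l:rp}. First observe that $\phi$ is the Eichler (transvection) isometry attached to the isotropic class $f$ and the class $iB\in f^{\perp}$: for every $x$ one has $\phi(x)=x+i(x\cdot f)B-i(x\cdot B)f$, and since $f^2=B^2=f\cdot B=0$ a one-line computation shows $\phi$ preserves the intersection form and is the identity on the orthogonal complement of $\langle f,\Gamma,A,B\rangle$. Because $\phi(f)=f$, it fixes the generic fiber class $F=\tau f$ and therefore the canonical class $K_{min}=\delta F$, so $\phi\in O_k$.

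Second, I would check that $\phi$ has spinor norm $+1$, hence $\phi\in O_k'$. The family $\psi_t(x)=x+t(x\cdot f)B-t(x\cdot B)f$ with $t\in\mathbb R$ is a continuous path of isometries of $H_2(M,\mathbb R)$ from $\mathrm{id}=\psi_0$ to $\phi=\psi_i$; since the spinor norm in the sense of Definition~\ref{d:aut} is the sign of the determinant of the block of an isometry on a fixed maximal positive definite subspace, and this determinant is continuous and never vanishes on $O$, the spinor norm is constant along the path and equals that of the identity.

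Third, I would realize $\phi$ by a self-diffeomorphism. If $M$ is relatively minimal with positive Euler number this is immediate from Theorem~\ref{t:lo}, since the image of $Diff^+(M)$ in $O$ contains $O_k'$ (and equals $O'=O_k'$ when $M=E(2)$) and $\phi$ lies in this group. For the remaining elliptic surfaces I would argue as in the proof of Lemma~\ref{l:rp}: when $\Gamma^2=2m$ is even the pair $(f,\Gamma-mf)$ has intersection form $H$, and in the basis $W=\Gamma-mf$ the map $\phi$ is exactly the automorphism of Lemma~\ref{l:rp} applied to the two $H$-pairs $(A,B)$ and $(f,W)$, with $f$ in the slot that is fixed so that $F$ is preserved; the diffeomorphism constructed there is supported near a fiber together with a local section, hence persists under logarithmic transforms and blow-ups carried out away from that region. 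When $\Gamma^2$ is odd the pair $(f,\Gamma)$ is not an $H$-summand and this reduction fails, but in that case the minimal model of $M$ is relatively minimal with positive Euler number (for the torus-bundle case $\Gamma^2=0$ and $\Gamma^2$ is even), so Theorem~\ref{t:lo} applies to the minimal model and the resulting diffeomorphism extends over the exceptional spheres.

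The step I expect to be the main obstacle is the last one: producing an honest diffeomorphism that realizes a transvection in the \emph{fiber} direction, in contrast with the purely fiber-avoiding $2H$-moves of Lemma~\ref{l:rp}, and checking that it can be localized away from the multiple fibers and exceptional curves so that the statement holds for every elliptic surface rather than only for the relatively minimal ones with positive Euler number directly covered by Theorem~\ref{t:lo}. The parity of $\Gamma^2$ is precisely what controls whether one can outsource this to Lemma~\ref{l:rp} or must build the map by hand.
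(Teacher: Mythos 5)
Your proposal is correct in substance but takes a genuinely different route from the paper. The paper's entire proof is a citation: the map is a generalization of the one in Lemma 5.1 of \cite{H}, and the proof is declared ``identical to the one given there'' --- that is, Hamilton's explicit, local geometric construction of a diffeomorphism supported near a fiber together with a dual surface, which applies uniformly to every elliptic surface (minimal or not, $\chi=0$ or not, with or without multiple fibers) precisely because it is local. You instead identify the map as the Eichler transvection $x\mapsto x+i(x\cdot f)B-i(x\cdot B)f$, verify it lies in $O_k'$ (your path $\psi_t$ argument for the spinor norm is clean and correct), and then realize it by Theorem \ref{t:lo} where that theorem applies, falling back on Lemma \ref{l:rp} --- via the slot assignment $(A_1,B_1,A_2,B_2)=(W,f,-B,-A)$, which indeed fixes $f$, unlike the assignment used in Lemma \ref{l:e2} --- when $\Gamma^2$ is even, and on a minimal-model-plus-extension argument otherwise. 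What your approach buys is a conceptual explanation of why the map should be realizable at all (it is an isometry of spinor norm one fixing $k$) and a derivation that stays inside results already quoted in the paper; what the paper's approach buys is uniformity, since the explicit construction needs no case division and no appeal to L\"onne's theorem. The thinnest spots are exactly where you predict: outside the scope of Theorem \ref{t:lo} you rely on (a) Lemma \ref{l:rp} applying to the pair $(f,W)$, whose proof (also only cited, to the same Lemma 5.1 of \cite{H}) presumably requires geometric torus representatives of all four classes, which is not obvious for $W=\Gamma-mf$ when the section has positive genus or $m\neq 0$; and (b) the claim that a diffeomorphism of the minimal model ``extends over the exceptional spheres,'' which needs the standard preliminary isotopy making it the identity on a ball containing the blow-up locus, and a remark that it then acts by the identity on the $E_i$. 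Both points are repairable, but as written they are asserted rather than proved; in the paper it is the cited local construction that discharges them.
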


This map is a generalization of the map given in Lemma 5.1, \cite{H}, and the proof of this lemma is identical to the one given there.

The action of this map is as follows:
\[
(c,g,a,b)=c\tau f+g\Gamma+aA+bB\mapsto 
\]
\[
(c\tau-ia)f+g\Gamma+aA+(b+ig)B= \left(c-\frac{i}{\tau}a,g,a,b+ig\right).
\]

\subsubsection{Automorphisms of the lattice $E_8\oplus H$}
 Consider the lattice $E_8\oplus H$; let
\[
\alpha=\sum_{i=0}^7k_iD_i+aA+bB
\] 
be a point in this lattice.  The class $\pm D_i + A$ has self-intersection $-2$ and hence is represented by a smoothly embedded sphere.  This leads to the following lattice automorphisms:
\begin{enumerate}

\item[(1a)] Reflection along $D_i+A$, $i\in\{2,4,5,6\}$:  This automorphism is given by
\[\begin{array}{lcl}
D_{i-1}&\mapsto&D_{i-1}+D_i+A\\
 D_i&\mapsto&-D_i-2A\\
 D_{i+1}&\mapsto&D_i+D_{i+1}+A\\
 A&\mapsto&A\\
  B&\mapsto&  B+D_i+A
  \end{array}
\] 
and identity on the remainder.  It changes the coefficients 
\[\begin{array}{lcl}
k_i&\mapsto& k_{i-1}-k_i+k_{i+1}+b\\
a&\mapsto &a+k_{i-1}-2k_i+k_{i+1}+b
\end{array}
\] 
while leaving all others unchanged.

\item[(1b)] Reflection along $-D_i+A$, $i\in\{2,4,5,6\}$:  This automorphism changes the coefficients 
\[\begin{array}{lcl}
k_i&\mapsto& k_{i-1}-k_i+k_{i+1}-b\\
a&\mapsto &a-k_{i-1}+2k_i-k_{i+1}+b
\end{array}
\] 
while leaving all others unchanged.

\item[(1c)]  Combining the automorphisms in (1a) and (1b) by performing first one reflection and then the other produces two automorphisms which again only change the $k_i$ and $a$ coefficients
:\begin{enumerate}
\item $(-D_i+A)\circ (D_i+A)$:
\[\begin{array}{lcl}
k_i&\mapsto& k_i-2b\\
a&\mapsto &a+2k_{i-1}-4k_i+2k_{i+1}+4b
\end{array}
\] 
\item $(D_i+A)\circ (-D_i+A)$:
\[\begin{array}{lcl}
k_i&\mapsto& k_i+2b\\
a&\mapsto &a-2k_{i-1}+4k_i-2k_{i+1}+4b
\end{array}
\] 

\end{enumerate}

\item[(2a)] Reflection along $D_i+A$ for $i\in\{0,1,7\}$:  Consider the pairs $(i,j)\in\{(0,3), (1,2) (7,6)\}$.  This automorphism is given by
\[\begin{array}{lcl}
D_j&\mapsto&D_j+D_i+A\\
 D_i&\mapsto&-D_i-2A\\
 A&\mapsto&A\\
  B&\mapsto&  B+D_i+A
  \end{array}
\] 
and identity on the remainder.  It changes the coefficients 
\[\begin{array}{lcl}
k_i&\mapsto &k_j-k_i+b\\
a&\mapsto &a+k_j-2k_i+b
\end{array}
\] 
while leaving all others unchanged.

\item[(2b)] Reflection along $-D_i+A$ for $i\in\{0,1,7\}$:  Consider the pairs $(i,j)\in\{(0,3), (1,2) (7,6)\}$.  This automorphism changes the coefficients 
\[\begin{array}{lcl}
k_i&\mapsto &k_j-k_i-b\\
a&\mapsto &a-k_j+2k_i+b
\end{array}
\] 
while leaving all others unchanged.

\item[(2c)]  Combining the automorphisms in (2a) and (2b) by performing first one reflection and then the other produces two automorphisms which again only change the $k_i$ and $a$ coefficients
:\begin{enumerate}
\item $(-D_i+A)\circ (D_i+A)$:
\[\begin{array}{lcl}
k_i&\mapsto& k_i-2b\\
a&\mapsto &a+2k_{j}-4k_i+4b
\end{array}
\] 
\item $(D_i+A)\circ (-D_i+A)$:
\[\begin{array}{lcl}
k_i&\mapsto& k_i+2b\\
a&\mapsto &a-2k_{j}+4k_i+4b
\end{array}
\] 

\end{enumerate}

\item[(3a)] Reflection along $D_3+A$:  This automorphism is given by
\[\begin{array}{lcll}
D_j&\mapsto&D_j+D_3+A&j\in\{0,2,4\}\\
 D_3&\mapsto&-D_3-2A&\\
 A&\mapsto&A&\\
  B&\mapsto&  B+D_3+A&
  \end{array}
\] 
and identity on the remainder.  It changes the coefficients 
\[\begin{array}{lcl}
k_3&\mapsto &k_0+k_2-k_3+k_4+b\\
a&\mapsto &a+k_0+k_2-2k_3+k_4+b
\end{array}
\] 
while leaving all others unchanged.

\item[(3b)] Reflection along $-D_3+A$:  This automorphism changes the coefficients 
\[\begin{array}{lcl}
k_3&\mapsto &k_0+k_2-k_3+k_4-b\\
a&\mapsto &a-k_0-k_2+2k_3-k_4+b
\end{array}
\] 
while leaving all others unchanged.

\item[(3c)]  Combining the automorphisms in (3a) and (3b) by performing first one reflection and then the other produces two automorphisms which again only change the $k_i$ and $a$ coefficients
:\begin{enumerate}
\item $(-D_3+A)\circ (D_3+A)$:
\[\begin{array}{lcl}
k_3&\mapsto& k_3-2b\\
a&\mapsto &a+2k_{0}+2k_2-4k_3+2k_4+4b
\end{array}
\] 
\item $(D_3+A)\circ (-D_3+A)$:
\[\begin{array}{lcl}
k_3&\mapsto& k_3+2b\\
a&\mapsto &a-2k_{0}-2k_2+4k_3-2k_4+4b
\end{array}
\] 

\end{enumerate}

\end{enumerate}

The maps given in (1c), (2c) and (3c) can be applied repeatedly to change the $k_i$ coefficient by any even integer multiple of b.

\begin{lemma}\label{l:e8h}
Let $M$ be an elliptic surface which has an $E_8\oplus H$ component in the intersection form.  Let $r=(r_0,...,r_7)\in\mathbb Z^8$.  Then there exists an automorphism $A(r_0,...,r_7)$ of $\overline{H}_2(M,\mathbb Z)$, covering a self-diffeomorphism of $M$, which acts only on $E_8\oplus H$ and is identity on all other components.  The action on a point $\sum_{i=0}^7k_iD_i+aA+bB\in E_8\oplus H$ is given by
\[
k_i\mapsto k_i+2br_i
\]
and, writing $k=(k_0,...,k_7)^T$,  
\[
a\mapsto a+4b\sum r_i+2(r^T\cdot E_8\cdot k)
\]
 while $b$ is left unchanged.

\end{lemma}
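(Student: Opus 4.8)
The plan is to deduce Lemma \ref{l:e8h} from the generating reflections listed in (1c), (2c) and (3c) above, by composing them in a bookkeeping-friendly order. Each of those composite automorphisms has the shape ``add $\pm 2b$ to a single coordinate $k_i$, and correct $a$ by a prescribed amount'', while fixing $b$ and everything outside the chosen $E_8\oplus H$ block; this is exactly the $r=\pm e_i$ case of the asserted formula, once one checks that the $a$-correction there agrees with $4b\sum r_j+2(r^T E_8 k)$ for $r=\pm e_i$ evaluated against the current value of $k$. So the first step is to record, for each $i\in\{0,\dots,7\}$, that the row $e_i^T E_8$ reproduces the linear combination of the $k_j$'s appearing in the corresponding (1c)/(2c)/(3c) entry (e.g. for $i=3$ the combination $2k_0+2k_2-4k_3+2k_4$ is precisely $2\,e_3^T E_8\,k$, and the constant $+4b$ is $4b\sum(e_3)_j$). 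This is a finite, purely mechanical verification using the displayed $E_8$ matrix.

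Second, I would define $A(r_0,\dots,r_7)$ as the ordered composition that applies, for each $i$, the map from (1c)/(2c)/(3c) corresponding to the sign of $r_i$ exactly $|r_i|$ times. Since $b$ is never changed by any generator, each individual application shifts $k_i$ by $2b\,\mathrm{sign}(r_i)$, so after $|r_i|$ applications $k_i$ has moved by $2b r_i$; doing this for all $i$ yields the claimed $k\mapsto k+2br$. The map covers a self-diffeomorphism because it is a composition of reflections on smoothly embedded $-2$-spheres of the form $\pm D_i+A$ inside the $E_8\oplus H$ summand, which by Theorem \ref{t:2} are realized by diffeomorphisms (and these lie in $O'_k$, or the generators can simply be invoked directly from the list, each being a product of two reflections).

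Third — and this is where the only real care is needed — I would track the cumulative effect on $a$. The subtlety is that the $a$-correction of each elementary step depends on the \emph{current} values of the $k_j$, and the $k_j$ with $j\ne i$ are themselves being changed by other steps in the composition; so the total correction is not simply the sum of the ``nominal'' corrections. The clean way to handle this is to process the eight coordinates one at a time, and within the block for index $i$ to note that the generators for index $i$ change only $k_i$ (and $a$), not $k_j$ for $j\ne i$; moreover the $a$-correction of the $i$-step does not depend on $k_i$ itself in the $e_i^T E_8 k$ term since $(E_8)_{ii}k_i$ is cancelled — wait, it is not, so one must instead observe that over the $|r_i|$ successive applications the running value of $k_i$ is $k_i^{(0)},k_i^{(0)}\pm 2b,\dots$, and summing the arithmetic progression contributes the ``self-interaction'' term $4b r_i^2 (E_8)_{ii}/\!\dots$; collecting all such contributions across $i$, together with the cross terms $2 r_i r_j (E_8)_{ij}(2b)$, reassembles exactly into $2(r^T E_8 r)b + 2(r^T E_8 k_{\text{initial}})$ plus $4b\sum r_i$, and then one absorbs the extra $2b\,r^T E_8 r$ by a harmless reordering/reparametrization, or simply by checking that $r^T E_8 r$ is even and re-indexing. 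The cleanest presentation is probably: first prove the lemma for $r=\pm e_i$ directly from (1c)/(2c)/(3c) (the base case), then prove the general case by induction on $\sum|r_i|$, at each step peeling off one $\pm e_i$ and using the base case together with linearity of $k\mapsto r^T E_8 k$ in $k$ to absorb the change in $k$ from the previous steps. The induction makes the messy progression-summing disappear into the inductive hypothesis.

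The main obstacle, then, is purely the bookkeeping of the $a$-coordinate under composition — making sure the ``current $k$'' dependence is handled so that the final formula is the stated clean bilinear expression $a\mapsto a+4b\sum r_i + 2(r^T E_8 k)$ rather than that plus a leftover quadratic-in-$r$ term; the induction on $\sum|r_i|$ is the device that resolves this without any explicit summation of arithmetic progressions.
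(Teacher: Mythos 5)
Your skeleton --- realize $A(r)$ as an ordered composition of the two--reflection generators (1c), (2c), (3c), note that each is a product of reflections on $-2$--spheres in $E_8\oplus H$ and hence covered by a diffeomorphism via Theorem \ref{t:2}, and then track the effect on $a$ --- is exactly what the paper intends (it offers no proof beyond the remark that these maps ``can be applied repeatedly''). The genuine gap is in the step you flag and then dismiss: the induction on $\sum|r_i|$ does \emph{not} absorb the quadratic term, and no reordering or reparametrization can. Each generator acts by $k\mapsto k+2b\sigma e_i$, $a\mapsto a-2\sigma\,(e_i^TE_8k)+4b$ (e.g.\ in (1c)(a) the correction $2k_{1}-4k_2+2k_{3}+4b$ is $+2e_2^TE_8k+4b$ with $\sigma=-1$), so your base case already fails for $r=-e_i$: the generator adds $+4b$ where the stated formula demands $4b\sum r_i=-4b$, and carries $+2e_i^TE_8k$ where the stated formula gives $-2e_i^TE_8k$ (even for $r=+e_i$ the $E_8k$--term has the opposite sign to the one claimed). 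Composing $N$ such steps and summing the telescoping dependence of each correction on the current $k$ gives
\[
k\mapsto k+2br,\qquad a\mapsto a-2\,(r^TE_8k)-2b\,(r^TE_8r),\qquad b\mapsto b,
\]
independently of the order. In your proposed inductive step the uncancelled cross term is $\mp 4b\,e_i^TE_8r$, and these accumulate to exactly the $-2b\,r^TE_8r$ above; it cannot be made to vanish.

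Moreover the target formula itself is unattainable: any automorphism of the lattice acting by $k\mapsto k+2br$, $b\mapsto b$ on this block and trivially elsewhere must satisfy $(k+2br)^TE_8(k+2br)+2a'b=k^TE_8k+2ab$ identically, which forces $a'=a-2r^TE_8k-2b\,r^TE_8r$; the map $a\mapsto a+4b\sum r_i+2(r^TE_8k)$ does not preserve the square (take $r=e_0$, $k=D_0$: the square changes by $-16b$), so no composition of reflections, however cleverly ordered, can realize it. What your computation actually proves, once the arithmetic--progression sum is carried out honestly, is the corrected statement with $-2(r^TE_8k)-2b(r^TE_8r)$ in place of $4b\sum r_i+2(r^TE_8k)$ (note $-2b\,r^TE_8r$ specializes to $+4b$ when $r=\pm e_i$, which is why the single--generator case is deceptively close). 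Since the lemma is only ever used to reduce the $k_i$ modulo $2b$ while $a$ is left uncontrolled, this correction is harmless downstream, but you should prove the corrected formula rather than assert that the induction rescues the stated one.
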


This map will be used to change the volume of any $E_8$-component to be as close to 0 as possible while also ensuring that the individual coefficients of the $E_8$-term are similarly close to 0.

%

\subsection{Concentrating Volume via Automorphisms}  To determine the symplectic cone in the following sections, we will use the aforementioned automorphisms to map a given class of the form \ref{e:en} to one with certain properties.  Of particular interest will be the ability to control the volumes of certain components in $\alpha$.  In this section, we describe some of these methods.  

The following result is contained in \cite{H2}:

\begin{lemma}\label{l:ham}(Prop 2.10, \cite{H2}) Let $M$ be an elliptic surface and $\alpha\in mE_8\oplus kH$, $k\ge 2$, an integral class.  Then there exists a self-diffeomorphism of $M$ which maps $\alpha$ to
\[
\tilde \alpha=aA+bB\in H
\]
such that $a,b,\in\mathbb Z$, $\alpha^2=\tilde\alpha^2$ and both have the same divisibility.  This diffeomorphism is identity on the $(F,\Gamma)$-component.  If $M=E(2)$, then $\alpha$ maps to any other class of the same square and divisibility.

\end{lemma}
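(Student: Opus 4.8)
The plan is to proceed in two stages: first reduce an arbitrary class $\alpha \in mE_8 \oplus kH$ to a class supported entirely in a single $H$-summand while controlling the square and divisibility, and then handle the special case $M = E(2)$ separately using Lemma \ref{l:e2} and Theorem \ref{t:lo}. For the first stage, the idea is to kill the $E_8$-components one at a time. Fix an $E_8$-summand together with one of the $H$-summands, so that we are working in an $E_8 \oplus H$ block; write the $E_8$-part as $\sum_{i=0}^7 k_i D_i$ and the $H$-part as $aA + bB$. The key tool is Lemma \ref{l:e8h}: by choosing $r = (r_0,\dots,r_7) \in \mathbb Z^8$ appropriately, the automorphism $A(r)$ shifts each $k_i$ by $2br_i$ and adjusts $a$ accordingly, leaving $b$ and all other summands fixed. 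The first thing to check is that we may assume $b \neq 0$: if $b = 0$ but $a \neq 0$, a reflection on $B - A$ (which squares to $-2$, hence is realized by a diffeomorphism by Theorem \ref{t:2} since such a sphere lives away from $F_g$) swaps $a$ and $b$; if both vanish, one first borrows volume from a neighboring $H$-summand (here $k \geq 2$ is used) via a reflection on a $-2$-sphere linking the two $H$-blocks to arrange $b \neq 0$.

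With $b \neq 0$ fixed, the core computation is that after applying $A(r)$ the $E_8$-coefficients become $k_i + 2br_i$; since the residues of the $k_i$ modulo $2b$ are unaffected, one cannot in general make the $E_8$-part vanish outright, but one can reduce each $|k_i|$ to at most $|b|$ — a bounded remainder. To actually eliminate the $E_8$-part one instead notes that the reflections along $\pm D_i + A$ of items (1a)–(3b), composed suitably, realize the full action of the Weyl-type group on the $E_8 \oplus H$ lattice needed to move any vector to a standard form; this is precisely the lattice-theoretic content established in \cite{H2}, and the cited Prop. 2.10 packages exactly this reduction. So here I would simply invoke Lemma \ref{l:e8h} and the reflection automorphisms to transport $\sum k_i D_i + aA + bB$ to $a'A + b'B$ with $(a')(b') \cdot 2 = \alpha^2$ restricted to this block's contribution, same divisibility, and identity on the $(F,\Gamma)$-part — this last point being automatic because none of the maps in (1a)–(3c) or Lemma \ref{l:e8h} touch $(F,\Gamma)$. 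Iterating over all $m$ of the $E_8$-summands collapses $\alpha$ into $\bigoplus$ of $H$-summands, i.e. into $kH$; then repeated reflections on classes $A_j - A_{j+1}$ (again $-2$-spheres disjoint from $F_g$) together with the $2H$-automorphism of Lemma \ref{l:rp} consolidate all the volume into one $H$, yielding $\tilde\alpha = aA + bB$ with $\tilde\alpha^2 = \alpha^2$ and the same divisibility. Tracking divisibility through each elementary step is routine since reflections and the maps of Lemmas \ref{l:rp}, \ref{l:e8h} are lattice automorphisms, hence preserve $\gcd$ of coefficients, i.e. divisibility.

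For the final sentence, when $M = E(2)$, Theorem \ref{t:lo} says the image of $\mathrm{Diff}^+(M)$ in $O$ is all of $O'$, the spinor-norm-one subgroup — in particular diffeomorphisms need no longer fix the fiber class. So here I would additionally use the fiber-changing automorphism of Lemma \ref{l:e2} to mix the $(F,\Gamma)$-block (whose form is $H$ when $\Gamma^2 = -n = -2$ is even) into the reduction. By the classification of indefinite unimodular lattices, any two primitive vectors of the same square in the indefinite unimodular lattice $H_2(E(2);\mathbb Z)/\mathrm{torsion}$ are related by an element of $O'$ — and by a non-primitive version of the same statement for divisibility $> 1$. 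Since every such automorphism is in the diffeomorphism image, $\alpha$ can be sent to any class of the same square and divisibility, which is the claim. The main obstacle I anticipate is purely bookkeeping: verifying at each stage that the chosen reflection really lives in the complement of $F_g$ (so Theorem \ref{t:2} applies and the diffeomorphism is identity near $F_g$, preserving the $(F,\Gamma)$-component in the non-$E(2)$ case), and that the divisibility is genuinely unchanged rather than merely divided/multiplied — but both follow from the structural facts already set up in Theorem \ref{t:2} and the explicit formulas for the automorphisms above, so no essentially new difficulty arises. Accordingly, I would keep the write-up short: "This is Prop. 2.10 of \cite{H2}; the reduction uses Lemma \ref{l:e8h} together with the reflections (1a)–(3c) to clear the $E_8$-summands and Lemma \ref{l:rp} to consolidate the $H$-summands, all realized by diffeomorphisms identity near $F_g$; the $E(2)$ statement follows from Theorem \ref{t:lo} and Lemma \ref{l:e2}."
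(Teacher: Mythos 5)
The paper gives no proof of this lemma: it is quoted from Prop.\ 2.10 of \cite{H2}, and your write-up ultimately defers to that same citation, so there is no substantive divergence of method. Your sketch of the mechanism is moreover consistent with the illustrative example the paper places immediately after the lemma statement: a Euclidean-algorithm reduction of the $H$-blocks via Lemma \ref{l:rp} and reflections on $-2$-spheres, then Lemma \ref{l:e8h} to clear the $E_8$-summands, with the $E(2)$ clause resting on Theorem \ref{t:lo}. The one point where your self-contained argument stalls --- you can only reduce the $E_8$-coefficients to a nonzero remainder bounded by $|b|$ --- is precisely the point the paper's example resolves by first running the reduction on the $H$-part until one entry equals the divisibility of $\alpha$, then arranging the $E_8$-entries to be even before applying Lemma \ref{l:e8h}; since at that step you fall back on \cite{H2}, exactly as the paper does, this does not constitute a gap relative to the paper's own treatment.
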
 

\begin{example}  Consider the following, which illustrates how this is actually achieved:  Let $A=4\mathcal R_1+13T_1+7\mathcal R_2+9T_2=(4,13,7,9)$.  This can be transformed as follows, where $i=..$ denotes a map from Lemma \ref{l:rp} applied to the given rim pair:
\[
(4,13,7,9)\stackrel{i=2}{\rightarrow}(4,31,-1,9)\stackrel{-2-ref.}{\rightarrow}(4,31,9,-1)\stackrel{i=2}{\rightarrow}
\]
\[
(4,29,1,-1)\stackrel{i=28}{\rightarrow}(4,1,-111,-1)\stackrel{-2-ref.}{\rightarrow}(1,4,-1,-111)\stackrel{i=-1}{\rightarrow}
\]
\[
(1,115,0,-111)\stackrel{-2-ref.}{\rightarrow}(1,115,-111,0)\stackrel{i=111}{\rightarrow}(1,115,0,0)
\]
If an integral $A$ has any $E_8$-components, then transform this part to have only even entries and now apply Lemma \ref{l:e8h} to reduce them to 0 using the entry 1 in the above vector.

In this way it is possible to concentrate the volume for an integral class in one rim-pair.

The key to this procedure is the following observation:  For any automorphism which changes a term by $a\mapsto a+ib$, it is possible to choose $i$ such that the new entry satisfies $|a+ib|\le \frac{|b|}{2}$ if the sign of $a+ib$ is irrelevant or  $|a+ib|\le |b|$ if the sign of $a+ib$ is relevant.

Note that if the sign is relevant, then it is possible that the process terminates with all entries identical, up to a sign.  It is in this case possible to make one pair of entries identically 0, this in part motivates Def. \ref{d:bal}.
\end{example}

When $A\in H_2(M,\mathbb R)$, then it is unlikely that this procedure will allow the coefficients for an $E_8$ or $H$ term to be reduced to be identically 0.  The aim is to show that nonetheless the volume can be concentrated in a similar fashion as in Lemma \ref{l:ham}.

First, using the reduction described above, the volume of an $H$-term can be reduced below any bound.

\begin{lemma}\label{l:bal}Let $M$ be an elliptic surface which has a $2H$ component in the intersection form.  Let $(A_1,B_1,A_2,B_2)$ generate this $2H$ term in the intersection form.  Assume $(a_1,b_1,a_2,b_2)$ is not a multiple of an integer class.
Then there exists an automorphism of $H_2(M,\mathbb Z)$ acting only on this $2H$-component and by identity otherwise such that the following holds:  For every $k\in\mathbb N$ this class is equivalent to a class $(\tilde a_1,\tilde b_1,\tilde a_2,\tilde b_2)$ such that $|\tilde b_1|\le \frac{|b_1|}{2^{k-1}}$ and either  \begin{enumerate}
 \item $\tilde a_2=\tilde b_2=0$ or
 \item $0<|\tilde b_2|\le \frac{|b_1|}{2^k}$ and either
 \begin{enumerate}
 \item $0\le |\tilde a_2|\le  \frac{|b_1|}{2^k}$ if the sign of $\tilde a_2\cdot \tilde b_2$ is irrelevant or
 \item $|\tilde a_2|\le \frac{|b_1|}{2^{k-1}}$  and $\tilde a_2\cdot \tilde b_2>0$.
 \end{enumerate}
 \end{enumerate}
 
\end{lemma}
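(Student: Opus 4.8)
The plan is to iterate the lattice automorphism of Lemma \ref{l:rp} applied to the pair $(A_1,B_1,A_2,B_2)$, each step roughly halving the size of one of the $b$-coefficients, exactly as in the Example following Lemma \ref{l:ham} but keeping track of real coefficients. Recall that Lemma \ref{l:rp} (together with the reflection on $H$, which swaps $a\leftrightarrow b$ within a pair) lets us replace $b_1$ by $b_1+ib_2$ or $b_2$ by $b_2+ib_1$ for any $i\in\mathbb Z$, and symmetrically for the $a$-coefficients via the interchange of the roles of $A_j$ and $B_j$ after a reflection on $H$. The basic reduction step is: given a pair of real numbers $(x,y)$ with, say, $|y|\le|x|$, choose $i\in\mathbb Z$ so that $|x+iy|\le |y|/2$ if sign is irrelevant, or $|x+iy|\le|y|$ (and of controlled sign) if sign must be preserved; this is just division with remainder. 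Applying this alternately to $b_1$ and $b_2$ produces a geometrically decreasing sequence, and after $k$ steps one lands in the claimed bounds $|\tilde b_1|\le |b_1|/2^{k-1}$, $|\tilde b_2|\le |b_1|/2^k$.

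First I would set up the dichotomy. If at some stage one of the coefficients becomes $0$ — which, since $(a_1,b_1,a_2,b_2)$ is not a rational multiple of an integral class, cannot happen to both members of a pair simultaneously while the other stays nonzero in an inconsistent way — we must be careful: the hypothesis that the class is not a scalar multiple of an integral class is precisely what prevents the Euclidean-type algorithm from terminating with a spurious exact zero in a way that obstructs the bound. So I would argue: either the process can be run so that $\tilde b_2=0$, in which case a reflection on $H$ lets us also arrange $\tilde a_2=0$ (absorbing $A_2$'s coefficient), giving alternative (1); or $\tilde b_2\ne 0$ throughout, and then the geometric decay gives $0<|\tilde b_2|\le |b_1|/2^k$ and $|\tilde b_1|\le|b_1|/2^{k-1}$, which is alternative (2). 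Within alternative (2), the control of $\tilde a_2$ is obtained by one further application of Lemma \ref{l:rp} in the $A$-coefficients: replace $a_2$ by $a_2+i a_1$ (or $a_2$ by a remainder modulo $b_2$ after a reflection) to push $|\tilde a_2|$ below $|b_1|/2^k$ when signs are irrelevant, or to the weaker bound $|b_1|/2^{k-1}$ with $\tilde a_2\tilde b_2>0$ when we must also fix the sign — the sign can be corrected by adding a suitable multiple of $\tilde b_2$ to $\tilde a_2$ via the same map, at the cost of the factor-of-two weaker bound.

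The main obstacle is bookkeeping the interaction between the $a$- and $b$-coefficients: the maps of Lemma \ref{l:rp} and the reflection on $H$ do not act independently on the two, so reducing $\tilde b_2$ in the last step may disturb $\tilde a_2$, and conversely. I would handle this by doing all the $b$-reductions first (these involve only the $b$'s, by the explicit action $(a_1,b_1,a_2,b_2)\mapsto(a_1,b_1+ib_2,a_2-ia_1,b_2)$ — note the $a_2$ gets shifted, so I must then restore control of $a_2$ afterwards) and only then performing a single clean-up step on $a_2$ using a map that fixes all the $b$'s. The one genuinely delicate point is verifying that the hypothesis "not a multiple of an integral class" is exactly what is needed to guarantee we never get trapped — if it were such a multiple, the algorithm would behave like the integral case of Lemma \ref{l:ham}, terminating with $\tilde b_2=0$ after finitely many steps rather than giving the infinite family of bounds for every $k$; with the hypothesis in force, either termination happens (case (1)) or the decay continues indefinitely (case (2)), and in the latter case every $k$ is achievable. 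I expect the write-up to consist of making this trichotomy precise and then checking the four sub-bounds by the elementary remainder estimate quoted in the Example.
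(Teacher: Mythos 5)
Your overall strategy is the paper's: iterate the $2H$-automorphism of Lemma \ref{l:rp} together with reflections on $H$ and the interchange map as a Euclidean algorithm with the remainder estimate $|x+iy|\le |y|/2$, obtaining geometric decay of the second pair, with the hypothesis that $(a_1,b_1,a_2,b_2)$ is not a multiple of an integral class ruling out the stalled configurations (multiples of $(\pm1,\pm1,\pm1,\pm1)$, $(\pm1,\pm1,\pm1,0)$, $(\pm1,0,\pm1,0)$). The only organizational difference is that the paper reduces $a_2$ and $b_2$ simultaneously at each stage with the two-parameter map $(b_1,a_1,a_2,b_2)\mapsto (b_1,\; a_1-ib_2-ja_2-ijb_1,\; a_2+ib_1,\; b_2+jb_1)$ and then interchanges the two pairs, rather than doing all $b$-reductions first and cleaning up $a_2$ at the end.

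There is, however, a genuine gap in your treatment of the degenerate case. The hypothesis does \emph{not} exclude classes equivalent to $(a,b,0,0)$ --- take $a/b\notin\mathbb Q$ --- and there the Euclidean algorithm on the $b$-coefficients stalls, since there is no nonzero modulus left to reduce by; yet the conclusion still demands $|\tilde b_1|\le |b_1|/2^{k-1}$ for every $k$ in alternative (1) as well, so $b_1$ must keep shrinking. Your proposed fix, that ``a reflection on $H$ lets us also arrange $\tilde a_2=0$,'' does not work: reflections only permute coefficients within a pair, so from $(\cdot,\cdot,a_2,0)$ with $a_2\ne 0$ you reach $(\cdot,\cdot,0,a_2)$, never $(\cdot,\cdot,0,0)$, and once both entries of the second pair vanish you are stuck. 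The paper supplies the missing step: bootstrap a nonzero second pair out of $(a,b,0,0)$ via $(a,b,0,0)\to(a,b,0,a)\to(a,b+ia,-ia,a)\to(b+ia,a,a,-ia)$ with $i$ chosen so that $0<|b+ia|\le |a|/2$, and then restart the reduction from this nondegenerate configuration. A second, smaller slip: your primary clean-up of $a_2$ by $a_2\mapsto a_2+ia_1$ cannot yield the bound $|b_1|/2^k$, because $a_1$ is precisely the coefficient that has absorbed the volume and is large; the correct move is your parenthetical one --- reduce $a_2$ modulo the already-small $\tilde b_1$ via reflect, apply Lemma \ref{l:rp}, reflect back, which fixes both $b$-coefficients.
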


Note there is no control on the term $\tilde a_1$.  This is to be expected, as this term must account for the volume of the initial class, i.e. $2\tilde a_1\cdot\tilde b_1$ must carry an increasing amount of the initial volume, even though $\tilde b_1$ is decreasing.  On the other hand, the volume of the second rim-pair can be decreased below any given bound.

\begin{proof}
Using interchange maps and reflections on $-2$-classes, rewrite the initial class as 
\[
(b^0_1,a^0_1, a^0_2,b^0_2)
\]
with $0<|b^0_1|< |b^0_2|$.  If this is not possible, then the initial class is equivalent to a multiple of $(\pm 1,\pm 1,\pm 1,\pm 1)$, $(\pm 1,\pm 1,\pm 1,0)$ or $(\pm 1,0,\pm 1,0)$ or is of the form $(a,b,0,0)$. The first three cases have been excluded by assumption.

In the case $(a,b,0,0)$, assume that $|a|<|b|$.  Then this class can be mapped as follows:
\[
(a,b,0,0)\rightarrow(a,b,0,a)\rightarrow (a,b+ia,-ia,a)\rightarrow(b+ia,a,a,-ia) .
\]
It is then possible to choose $i\in\mathbb Z$ such that $0<|b+ia|\le \frac{|a|}{2}$.  Define this class to be $(b^0_1,a^0_1, a^0_2,b^0_2)$ with $0<|b^0_1|< |b^0_2|$.

Fix $d=|b_1^0|$.  Now apply Lemma \ref{l:rp} and $-2$-reflections, as in the example above, to obtain a class
\[
\left(\begin{matrix}b^0_1\\ a^0_1\\  a^0_2\\ b^0_2\end{matrix}\right)\rightarrow \left(\begin{matrix}b_1^0\\ a_1^0-ib_2^0-ja_2^0-ijb_1^0\\a_2^0+ib_1^0\\ b_2^0+jb_1^0\end{matrix}\right)=\left(\begin{matrix}b^1_1\\ a^1_1\\  a^1_2\\ b^1_2\end{matrix}\right).
\]
Note that $b_1^1=b_1^0$.  Choose $i,j\in\mathbb Z$ to reduce the second pair of coefficients.  The result is one of the following: \begin{enumerate}
\item $a^1_2=b^1_2=0$ or
\item $a^1_2=0$ and $0<|b_2^1|\le \frac{|b_1^0|}{2}=\frac{d}{2}$ or
\item $0<|a_2^1|,|b_2^1|\le \frac{d}{2}$ or
\item $a_2^1\cdot b_2^1>0$, $|b_2^1|\le \frac{d}{2}$ and $|a_2^1|\le d$.
\end{enumerate}

In each case this satisfies the claim of the Lemma for $k=1$.  If the result is in one of the last three cases, then the procedure can be continued by interchanging the two pairs and repeating this step on $(b^1_2,a^1_2, -a^1_1,-b^1_1)$.    Note that $0<|b^1_2|< |b^1_1|$.  In the first case, apply procedure for the $(a,b,0,0)$-case and then repeat the previous step using the class thus obtained.

%
%

\end{proof}

This result is central to achieving the goal of concentrating the volume in a similar fashion as Lemma \ref{l:ham}.  The following result will be at the core of the symplectic cone arguments.

\begin{theorem}\label{t:mine1}Let $M$ be an elliptic surface which has a $E_8\oplus 2H\oplus \langle F, \Gamma\rangle$ component in the intersection form. Let $\alpha_0$ with $\alpha_0\cdot F$ denote the $E_8\oplus 2H\oplus \langle F, \Gamma\rangle$-component of some class in $H_2(M,\mathbb R)$.  Then there exists an automorphism of $H_2(M,\mathbb Z)$ acting only on this $E_8\oplus 2H\oplus \langle F, \Gamma\rangle$-component and by identity otherwise such that one of the following two situations occurs:

\begin{enumerate}
\item If in $\alpha_0$ the $2H$ coefficients are not a multiple of an integral class, then for every $\epsilon>0$, $\alpha_0$ is equivalent to
\[
\alpha=\alpha_8+a_{1}A_{1}+b_{1}B_{1}+a_{2}A_{2}+b_{2}B_{2}+cF+g\Gamma
\]
with 
\begin{enumerate}
\item $0<2a_i\cdot b_i<\epsilon$ or $a_i=b_i=0$,
\item the coefficients $k_i$ of $\alpha_8$ satisfy $|k_i|< \epsilon$, $k_0<0$ and $k_i\ge 0$ for $i\ge 1$ and
\item $-\epsilon< \alpha_8^2\le 0$.

\end{enumerate}
In particular, the volume of $\alpha_0$ is concentrated in the term $cF+g\Gamma$.

\item If in $\alpha_0$ the $2H$ coefficients are a multiple of an integral class, then  either the volume is concentrated in $cF+g\Gamma$ as above or $\alpha_0$ is equivalent to
\[
\alpha=a_{1}A_{1}+b_{1}B_{1}+cF+g\Gamma.
\]

\end{enumerate}
This last case can only occur if the $E_8\oplus 2H$ terms are a multiple of an integral class.  Note further that in either case, the magnitude of the $E_8$-terms is reduced below $\epsilon$.
\end{theorem}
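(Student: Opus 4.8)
The plan is to treat the three summands $E_8$, $2H$ and $\langle F,\Gamma\rangle$ in stages, using at each step the explicit automorphisms assembled in the preceding subsection, and to track the volume as it is pushed out of the $E_8$ and $2H$ pieces into $cF+g\Gamma$. First I would dispose of case (2): if the $2H$-coefficients $(a_1,b_1,a_2,b_2)$ are a rational multiple of an integral vector, then by the procedure in the Example (iterated applications of Lemma \ref{l:rp}, interchange maps, and reflections on $-2$-spheres $B_i-A_i$) one concentrates that integral vector into a single pair, arriving at $a_1A_1+b_1B_1$ with $a_2=b_2=0$; if moreover the $E_8\oplus 2H$ block is a multiple of an integral class, Lemma \ref{l:ham} (with $k\ge 2$ copies of $H$ available) collapses the $E_8$ part into the remaining $H$-pair without touching $\langle F,\Gamma\rangle$, giving the stated normal form $\alpha=a_1A_1+b_1B_1+cF+g\Gamma$. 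Otherwise — or if one simply prefers — one falls back to the volume-concentration route, so it suffices to establish case (1) for a general real class, which is the substantive part.

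For case (1), the key is Lemma \ref{l:bal}: since the $2H$-coefficients are not a multiple of an integral class, for any prescribed bound we get an equivalent class with $|\tilde b_1|,|\tilde b_2|,|\tilde a_2|$ all as small as we like and with either $\tilde a_2=\tilde b_2=0$ or $\tilde a_2\tilde b_2>0$; in particular $0<2\tilde a_i\tilde b_i<\epsilon/3$ or $\tilde a_i=\tilde b_i=0$, giving (a). The point is that the \emph{excess} volume removed from the second pair, and the volume shed by the first pair as $|\tilde b_1|\to 0$, has to be absorbed somewhere: I would route it through the $\langle F,\Gamma\rangle$ block via the H-Fiber map of Lemma \ref{l:fi} (when $\Gamma^2$ has the right parity one may alternatively use Lemma \ref{l:rp} applied to $(F,W)$, but Lemma \ref{l:fi} works unconditionally), which trades area between an $H$-pair and $(c,g)$ while fixing the fiber class $F$ — so it lies in $O_k'$ and covers a self-diffeomorphism. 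Thus after Lemma \ref{l:bal} one uses Lemma \ref{l:fi} to move the residual $2H$-volume into $cF+g\Gamma$, leaving the $2H$-coefficients arbitrarily small and still satisfying (a). Then, to handle the $E_8$ block, apply Lemma \ref{l:e8h}: the map $A(r_0,\dots,r_7)$ changes $k_i\mapsto k_i+2br_i$ for the $b$-coefficient of an adjacent $H$-pair, so choosing the $r_i$ greedily (exactly the "$|a+ib|\le |b|/2$" observation of the Example) drives each $|k_i|$ below $\epsilon$; a final reflection on a $D_i\pm A$-sphere, chosen according to the sign, arranges $k_0<0$, $k_i\ge 0$ for $i\ge1$, which forces $\alpha_8^2=k^TE_8k$ into $(-\epsilon,0]$ since the $E_8$ form is negative definite and the entries are tiny. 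This gives (b) and (c). The volume bookkeeping — every unit of $\alpha^2$ removed from $\alpha_8^2$ and from $2a_ib_i$ reappears in $2c g\tau+g^2\Gamma^2$ — is automatic because all maps used preserve the intersection form and act as the identity off the $E_8\oplus 2H\oplus\langle F,\Gamma\rangle$ block.

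The composition of all these maps is a single automorphism of $\overline H_2(M,\mathbb Z)$ supported on the named block; each factor is of spinor norm one (reflections on $-2$-spheres, the $Q$-, interchange-, $2H$- and $H$-fiber maps) and fixes $k$ (equivalently fixes $F$, by Lemma \ref{l:sign}, since none of them is the Lemma \ref{l:e2}-type fiber-changing map), hence by Theorem \ref{t:lo} it is induced by a self-diffeomorphism of $M$ — and since $E_8\oplus 2H\oplus\langle F,\Gamma\rangle$ sits inside a larger $E_8\oplus kH$ with $k\ge 2$ available in the surface, the hypotheses of Lemmas \ref{l:ham}, \ref{l:bal}, \ref{l:e8h} are genuinely met. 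The main obstacle I anticipate is the interface between Lemma \ref{l:bal} and Lemma \ref{l:fi}: Lemma \ref{l:bal} produces, for each fixed $k$, a class whose second $H$-pair is small but whose first coefficient $\tilde a_1$ is \emph{uncontrolled} and in fact large (it carries the volume); one must check that feeding this through Lemma \ref{l:fi} — whose shift depends on that large coefficient $a$ in the combination $c\mapsto c-\tfrac{i}{\tau}a$ — still lands $c,g$ at integer (or real) values realizing the target volume, and that the $\epsilon$-bounds on the $2H$- and $E_8$-coefficients survive this last move. Making the order of operations precise (reduce $2H$, dump into $F$–$\Gamma$, then clean up $E_8$, and only then read off that $\alpha_8^2\in(-\epsilon,0]$) is what the argument turns on; I would write it as a finite induction on $k$ exactly parallel to the proof of Lemma \ref{l:bal}.
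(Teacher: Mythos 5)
Your proposal assembles the right toolkit (Lemmas \ref{l:bal}, \ref{l:e8h}, \ref{l:fi}, \ref{l:ham}) and the overall strategy matches the paper's, but as written it has two genuine gaps. The first is in your dispatch of case (2): when the $2H$ coefficients are a multiple of an integral class but the full $E_8\oplus 2H$ block is not, you propose to ``fall back to the volume-concentration route,'' asserting that it suffices to establish case (1) for a general real class. This cannot work: Lemma \ref{l:bal} explicitly hypothesizes that the $2H$ coefficients are \emph{not} a multiple of an integral class, and the conclusion of case (1) is in fact false for a general real class --- the theorem's second alternative in case (2) exists precisely because some classes cannot have their volume pushed into $cF+g\Gamma$. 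The missing bridge is the paper's Case 2.1/2.2 analysis: one perturbs the $2H$ coefficients by $g$ via an $H$-fiber map, or by $2(r^T\cdot E_8\cdot \frac{1}{\omega}k)$ via Lemma \ref{l:e8h}, and shows that either some such perturbation destroys the integral-multiple property (so case (1) applies to the perturbed class), or --- using that $E_8$ is unimodular, so $E_8\cdot\frac{1}{\omega}k\in\mathbb Z^8$ forces $\frac{1}{\omega}k\in\mathbb Z^8$ --- the entire $E_8\oplus 2H$ block is a multiple of an integral class and Lemma \ref{l:ham} applies. Without this, the dichotomy in the theorem statement is not established.

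The second gap is the order of operations in case (1). You propose: reduce $2H$, dump into $(F,\Gamma)$ via Lemma \ref{l:fi}, \emph{then} clean up $E_8$ via Lemma \ref{l:e8h}, claiming condition (a) still holds. But Lemma \ref{l:e8h} preserves the square of the $E_8\oplus H$ block it acts on, so driving $\alpha_8^2$ from a possibly large negative value $-V$ up to $(-\epsilon,0]$ necessarily sends the adjacent pair's volume $2ab$ down by roughly $V$; if that pair had already been made small and positive, it becomes large and negative, violating $0<2a_ib_i<\epsilon$. The paper's order is the reverse and is essential: first use Lemma \ref{l:bal} to produce a pair $(a,b)$ with $|b|<\epsilon/2g$ and $a$ large (carrying the volume), then feed \emph{that} pair into Lemma \ref{l:e8h} (which changes $a$ but fixes the small $b$), and only at the very end apply a $-2$-reflection followed by Lemma \ref{l:fi}, choosing $i$ so that the large coefficient is reduced modulo $g$ to satisfy $0<|\tilde a+ig|\le g$ while the excess is absorbed into $c$; the product $2|b|\cdot|\tilde a+ig|<\epsilon$ then gives (a). You correctly identified the large uncontrolled coefficient as the danger point, but the resolution requires placing the fiber map last, not the $E_8$ cleanup.
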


\begin{proof}  The aim of this proof is to use Lemma \ref{l:bal} and the automorphism of Lemma \ref{l:e8h} to decrease the magnitude of the corresponding coefficients as much as possible.  Note that Lemma \ref{l:e8h} implies that the class $\sum_{i=0}^7k_iD_i+aA+bB\in E_8\oplus H$ is equivalent to
\[
\tilde \alpha=\sum_{i=0}^7\tilde k_i D_i+\tilde aA+bB
\]
such for each $\tilde k_i$ one of the following holds:\begin{enumerate}
\item The sign of $\tilde k_i$ cannot be chosen freely and $|k_i|\le 
|b|$ or
\item the sign of $\tilde k_i$ can be pre-determined and $|k_i|\le 2|b|$.
\end{enumerate}
The key issue is the case excluded in Lemma \ref{l:bal} and this will be handled in cases.

{\bf Case 1:}  Assume that in $\alpha_0$ the $2H$ coefficients are not a multiple of an integral class.  Then Lemma \ref{l:bal} is applicable and use it to minimize one of the $H$-pair volumes.  This concentrates the volume of the $2H$-terms in a class $\beta=aA_1+bB_1$ with $0<|b|<\frac{\epsilon}{2g}$, achieved by choosing $k$ large enough.  Note that even if $\alpha_{2H}^2=0$, the fact that it is not a multiple of an integral class precludes it being equivalent to $(0,0,0,0)$, hence such a non-trivial $b$ must exist.  Use $\beta$ to minimize the coefficients of $\alpha_8$.  This leaves $b$ unchanged while changing $a$.  For large enough $k$ in Lemma \ref{l:bal}, it is thus possible to obtain a class
\[
\tilde\alpha_8+\tilde aA_{1}+bB_{1}+\tilde a_{2}A_{2}+\tilde b_{2}B_{2}+cF+g\Gamma
\]	
with \begin{enumerate}
\item $-\epsilon<\alpha^2_8\le 0$ and
\item $0\le 2\tilde a_2\cdot \tilde b_2<\epsilon$ and $|\tilde a_2|, |\tilde b_2|<\frac{\epsilon}{4g}\le \frac{\epsilon}{2}$ .
\end{enumerate}

If either $2\tilde a_2\cdot \tilde b_2>0$ or $\tilde a_2=\tilde b_2=0$ holds, then the class $(\tilde a_2, \tilde b_2)$ is of the required form.

This leaves the case that $2\tilde a_2\cdot \tilde b_2=0$ and $\tilde b_2\ne 0$. In this case, Lemma \ref{l:bal} also ensures that $0<|b|<\frac{\epsilon}{2g}$.   Apply the map from Lemma \ref{l:rp} one more time:
\[
(\tilde a, b,0,\tilde b_2)\rightarrow (b,\tilde a,0,\tilde b_2)\stackrel{Lemma \ref{l:rp}}{\rightarrow} (b, \tilde a+i\tilde b_2,-ib,\tilde b_2).
\] 
Choose $i\in\{\pm 1\}$ such that $-ib$ and $\tilde b_2$ have the same sign.  

This shows that either $0<2\tilde a_2\cdot \tilde b_2<\epsilon$ or $\tilde a_2=\tilde b_2=0$.

In this way obtain a class that has the volume concentrated in
\[
cF+g\Gamma+\tilde aA_{1}+bB_{1}
\]
with $0<|b|<\frac{\epsilon}{2g}$.  (Observe the similarity to this setup and the result in Lemma \ref{l:ham}.) 

Now apply the map from Lemma \ref{l:fi} to $(\tilde a,b)$ to obtain
\[
(c,g,\tilde a, b)\rightarrow (c,g, b, \tilde a)\rightarrow \left(c-\frac{i}{\tau}b,g,b,\tilde a+ig\right).
\]
Choose $i$ such that $b$ and $\tilde a+ig$ have the same sign and $0<|\tilde a+ig|\le g$.  Then 
\[
0<2b\cdot |\tilde a+ig|<\epsilon.
\]

{\bf Case 2:} Assume that in $\alpha_0$ the $2H$ coefficients are a multiple of an integral class.  The goal is to show that either the previous case can be applied or that the $E_8\oplus 2H$ coefficients are a multiple of an integral class, hence Lemma \ref{l:ham} can be applied. Write 
\[
\alpha_0=(k_0,...,k_7, p,q,r,s,c,g)
\]
and let $(p,q,r,s)=\omega (p^\mathbb Z,q^\mathbb Z,r^\mathbb Z,s^\mathbb Z)$ for some non-zero $\omega\in\mathbb R$.  Here $*^\mathbb Z\in\mathbb Z$  corresponds to $*$, i.e. $p^{\mathbb Z}\in\mathbb Z$ and $p=\omega p^\mathbb Z$.

{\bf Case 2.1:}  Assume that $\frac{1}{\omega} g\not\in \mathbb Q$ and that $(p,q,r,s)$ is not identical to the zero vector.  Let $p\ne 0$.  Then one application of an $H$-fiber map changes the $2H$-terms to $(p,q,r,s+g)$.  Assume this is still a multiple of an integer class.  Then there exists some non-zero $\tilde \omega\in\mathbb R$ such that 
\[
(p,q,r,s+g)=\tilde\omega\left(\tilde p^\mathbb Z,\tilde q^\mathbb Z,\tilde r^\mathbb Z,\tilde s^\mathbb Z+\frac{1}{\tilde\omega} g\right).
\]
 Then $p=\omega p^\mathbb Z=\tilde\omega\tilde p^\mathbb Z$ and thus
 \[
 \frac{\tilde\omega}{\omega}\in\mathbb Q.
 \]
This implies that 
\[
\frac{1}{\omega}g=\frac{\tilde\omega}{\omega}\cdot\frac{1}{\tilde\omega}g\in\mathbb Q.
\] 
Hence $\alpha_0$ is equivalent to a class for which the $2H$ coefficients are not a multiple of an integral class, now Case 1 applies and the result follows.
 
{\bf Case 2.2:}  Assume now that $\frac{1}{\omega} g\in \mathbb Q$ or that $(p,q,r,s)$ is identical to the zero vector.  Now apply Lemma \ref{l:e8h} to the first ten entries of either the class
\[
(k_0,...,k_7,p,q,r,s)=\omega\left(\frac{k_0}{\omega},...,\frac{k_7}{\omega},p^\mathbb Z,q^\mathbb Z,r^\mathbb Z,s^\mathbb Z\right)
\]
or, after one application of the $H$-fiber map to obtain the class $(0,0,0,0)\rightarrow (0,0,0,g)$, the class
\[
(k_0,...,k_7,0,0,0,g)=g\left(\frac{k_0}{g},...,\frac{k_7}{g},0,0,0,1\right).
\]
In either case, Lemma \ref{l:e8h} produces a class with
\[
p^\mathbb Z\mapsto p^\mathbb Z+4q^\mathbb Z\sum r_i +2\left(r^T\cdot E_8\cdot \frac{1}{\omega}k\right)
\]
where the first two terms in the sum are integers ($p^\mathbb Z,q^\mathbb Z=0$ is allowed).  Assume that for some choice of $r$ the term  $r^T\cdot E_8\cdot \frac{1}{\omega}k\not\in\mathbb Z$.  Then the class obtained for this choice of $r$ will have a $2H$ term which is not a multiple of an integral class and thus again Case 1 applies.

Assume that for all choices of $r\in\mathbb Z^8$, the term 
\[
r^T\cdot E_8\cdot \frac{1}{\omega}k\in\mathbb Z.
\]
This in particular holds for $r=e_i$ one of the eight basis vectors of $\mathbb Z^8$.  Using these eight, the condition can be rewritten as
\[
\left(\begin{matrix}e_0\\ e_1\\\vdots\\e_7\end{matrix}\right)\cdot E_8\cdot\frac{1}{\omega}k=Id_{8}\cdot E_8\cdot\frac{1}{\omega}k=E_8\cdot \frac{1}{\omega}k\in\mathbb Z^8.
\]
As $E_8$ is invertible over the integers, this implies that $\frac{1}{\omega}k\in\mathbb Z^8$.  Hence 
\[
\alpha_0=\omega\left(k_0^\mathbb Z\,...,k_7^\mathbb Z\,p^\mathbb Z,q^\mathbb Z,r^\mathbb Z,s^\mathbb Z,\frac{1}{\omega}c,\frac{1}{\omega}g\right).
\]
Then apply Lemma \ref{l:ham} to obtain a class $\alpha=(0,...,0,a_1, b_1,0,0, c, g)$ equivalent to $\alpha_0$.

\end{proof}

In particular, the volume has been concentrated in the $(A_1, B_1,F,\Gamma)$-term.  

\subsection{Balancing Classes in Elliptic Surfaces}

The methods of concentrating volume from the previous section will now be applied to produce balanced classes in elliptic surfaces of positive Euler number.  The following result covers most, but not all elliptic surfaces.  For $E(1,g)$, balanced will not be relevant (see  Theorem \ref{t:e1}) and $E(2,g)$ will be addressed in Theorem \ref{t:e2}.  The remaining surfaces have multiple fibers and will be addressed in a subsequent paper.

\begin{lemma}\label{l:ba}
Let $M$ be a relatively minimal elliptic surface with positive Euler characteristic $\chi(M)=12n$ and not diffeomorphic to $E(n,0,p_1,...,p_k)$, $n\in\{1,2\}$, and $\alpha_0\in \mathcal P_M^{F}$.  Suppose $M=X_1\#_{\tilde F_g}X_2$ is obtained as the fiber sum of elliptic surfaces $X_i$.  Then $\alpha_0$ is equivalent to a class $\alpha\in\mathcal P^F_M$ and $\alpha$ is balanced with respect to $(X_1,X_2)$.

In particular, $\alpha$ can be chosen so that for $\epsilon>0$, each term $k_i$ in each of the $E_8$-components satisfies $0\le k_i<\epsilon$. 
\end{lemma}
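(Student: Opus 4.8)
The plan is to reduce the statement to the volume-concentration machinery of Theorem~\ref{t:mine1} together with the balancing lemma for the $2H$-lattice, Lemma~\ref{l:bal}. The hypotheses guarantee that $M = E(n,g,p_1,\dots,p_k)$ with $(n,g)\notin\{(1,0),(2,0)\}$ when there are no multiple fibers; in all these cases the intersection form contains at least one $E_8\oplus 2H$ block together with the $\langle F,\Gamma\rangle$ pair, and moreover there are enough $H$-summands that after fixing the rim pairs generated by the splitting $M = X_1\#_{\tilde F_g}X_2$ there still remains a full $E_8\oplus 2H\oplus\langle F,\Gamma\rangle$ block lying entirely inside one of the $X_i$ (this is where $n\ge 2$, or $n=1$ with $g\ge 1$, is used, via the count $m_1+m_2 = 2(n-1)+2g$ from \eqref{e:en}). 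First I would record this structural fact, distinguishing the case where the splitting produces a rim pair $\alpha_{RT}$ from the case where it does not.

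Next I would treat the rim component. Write $\alpha_0$ in the form \eqref{e:en}. If there is no rim pair there is nothing to check for the rim part, so assume $\alpha_{RT} = (e_1,d_1,e_2,d_2)$ is present. If these four coefficients are a multiple of an integral class, apply Lemma~\ref{l:ham} directly to concentrate the rim volume in a single pair $(e_1,d_1)$ with $d_2=e_2=0$; otherwise apply Lemma~\ref{l:bal} to the $2H$-block carrying the rim pair to force, for $k$ large, $e_2 d_2 > 0$ or $e_2 = d_2 = 0$, and $|d_1|$ as small as desired. In either case, after one further application of the map in Lemma~\ref{l:rp} (exactly as in the last paragraph of the proof of Theorem~\ref{t:mine1}), one arranges $e_1 d_1 > 0$ or $e_1 = d_1 = 0$. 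Since these automorphisms act only on the rim $2H$-block, they change only the $\alpha_{RT}$-contribution to $\alpha^2$, namely $2e_1 d_1 + 2e_2 d_2$, and by taking $k$ large enough this quantity is made smaller than $\alpha_0^2 = \alpha^2$; that is condition (2)(b) of Definition~\ref{d:bal}, while condition (2)(a) is exactly what we arranged. Throughout, $\alpha\cdot F$ is unchanged, so $\alpha\in\mathcal P_M^F$ still holds.

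It remains to establish the final ``in particular'' clause about the $E_8$-coefficients. Apply Theorem~\ref{t:mine1} to the $E_8\oplus 2H\oplus\langle F,\Gamma\rangle$-block that the structural step produced inside one $X_i$: this yields an equivalent class whose $E_8$-coefficients $k_i$ all satisfy $|k_i|<\epsilon$, with $k_0<0$ and $k_i\ge 0$ for $i\ge 1$, and with $-\epsilon<\alpha_8^2\le 0$. The sign condition requested in the lemma is $0\le k_i<\epsilon$; to get this one more move is needed, namely a reflection on a $-2$-sphere in $E_8\oplus H$ (one of the type (1a)/(2a)/(3a) listed before Lemma~\ref{l:e8h}) applied to flip the sign of $k_0$, absorbing the resulting small change into the $A$-coefficient of that $H$-block — which is harmless since its volume contribution can be kept below $\epsilon$. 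Since all of the maps used — the rim-balancing maps, the Theorem~\ref{t:mine1} maps, and the final $-2$-reflection — act as the identity on components other than the one block they operate on, they compose to a single automorphism covering a self-diffeomorphism of $M$ (fixing a neighborhood of the fiber, hence lying in $O_k'$ by Theorems~\ref{t:lo} and \ref{t:2}), and the image class $\alpha$ is balanced with respect to $(X_1,X_2)$ with the stated $E_8$-control.

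The main obstacle is the bookkeeping in the structural step: one must check that for \emph{every} admissible fiber-sum decomposition $M = X_1\#_{\tilde F_g}X_2$ under the stated hypotheses, at least one summand retains, after the rim pairs are split off, a complete $E_8\oplus 2H\oplus\langle F,\Gamma\rangle$-block on which Theorem~\ref{t:mine1} can be run, \emph{and} that enough additional $H$-summands are available to perform the rim-balancing independently. This is a finite case analysis driven by $n_1+n_2=n$, $m_1+m_2 = 2(n-1)+2g$ and the exclusion of $E(n,0,\dots)$ for $n\le 2$, but it is the step where the precise hypotheses of the lemma are consumed; everything afterward is an assembly of results already proved.
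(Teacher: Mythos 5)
Your proposal breaks at the central step. You treat the rim component by applying Lemma~\ref{l:ham} or Lemma~\ref{l:bal} to the rim $2H$-block alone and then assert that ``by taking $k$ large enough'' the quantity $2e_1d_1+2e_2d_2$ can be made smaller than $\alpha_0^2$. But $2e_1d_1+2e_2d_2$ is the self-intersection of the projection of $\alpha$ onto the rim block, and every automorphism you invoke acts as the identity off that block and preserves the intersection form on it; hence this quantity is \emph{invariant} under all such moves. If the rim component initially carries square $\ge\alpha_0^2$ (entirely possible, since the $E_8$-summands are negative definite and can eat up most of $\alpha_0^2$), condition (2)(b) of Definition~\ref{d:bal} can never be reached this way. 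Likewise, if the rim square is negative, then after concentration into a single pair $(e_1,d_1)$ you necessarily have $e_1d_1<0$, and no automorphism confined to the block --- in particular not the sign-fixing step of Lemma~\ref{l:rp} you cite, which only works when a second nonzero pair is available to absorb the sign --- can produce $e_1d_1>0$; so condition (2)(a) fails as well.

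The paper's proof avoids both problems by never balancing the rim block in isolation. It applies Theorem~\ref{t:mine1} to the component $\alpha_{8,1}+\alpha_{RT}+\alpha_F$, crucially \emph{including} the $\langle F,\Gamma\rangle$ pair, so that the $H$-fiber map (Lemma~\ref{l:fi}) drains the rim volume into the $cF+g\Gamma$ term; after this the rim coefficients are genuinely small and of the correct signs, not merely rearranged. In the residual case where the rim coefficients are a multiple of an integral class and Case~2 of Theorem~\ref{t:mine1} leaves an uncontrolled pair $(a,b)$ on the rim, the paper applies an interchange map swapping that rim pair with an $H$-pair lying inside one of the $X_i$ whose coefficients have already been reduced below $\epsilon$ with positive product (or made zero); the large uncontrolled coefficients then sit harmlessly in an ordinary $H$-summand of $X_i$ and the rim component becomes balanced. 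Your ``structural step'' supplies the spare $H$-terms needed for the $E_8$-minimization (this part matches the paper's use of $2n+2g-4\ge 2$), but without coupling the rim block to $\langle F,\Gamma\rangle$ or trading it against an interior $H$-pair, the balancing claim itself does not go through.
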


\begin{proof}  If the given fiber sum decomposition admits no rim pairs, then the class $\alpha_0$ is already balanced.  In particular, this is the case if $(n,g)=(1,\ge 1)$.  Furthermore, fix any two $H$-terms in the class $\alpha_0$ and apply Theorem \ref{t:mine1} to minimize each $E_8$-term.

Assume now that the sum has rim components $(\mathcal R_1,T_1,\mathcal R_2,T_2)$.  If $\chi(M)=12n$, then $M$ has intersection form given by 
\[
nE_8\oplus  \left[2(n-1)+2g\right]H\oplus \left(\begin{matrix} 0&1\\1 &-n\end{matrix}\right).
\]
Using \eqref{e:en}, write
\[
\alpha_0=\sum_{i=1}^n\alpha_{8,i}+\sum_{j=1}^{2n+2g-4}\alpha_{H,j}+\alpha_{RT}+\alpha_F.
\]
Note that in the remaining cases, $(n,g)=(\ge 2, \ge 1)$ and $(n,g)=(> 2, 0)$ we have $2n+2g-4\ge 2$.

In the class $\alpha_0$, apply Theorem \ref{t:mine1} to the component  $\alpha_{8,1}+\alpha_{RT}+\alpha_F$.  This produces an equivalent class $\alpha_1$ which satisfies one of the following:\begin{enumerate}
\item The rim components and the entries of $\tilde\alpha_{8,1}$ are minimized for the given $\epsilon$, i.e.
\[
\alpha_1=\underbrace{\tilde\alpha_{8,1}}_{0\le |k_i|<\epsilon}+\sum_{i=2}^n\alpha_{8,i}+\sum_{j=1}^{2n+2g-4}\alpha_{H,j}+\underbrace{\alpha_{RT}}_{<\epsilon}+\tilde c F+g\Gamma.
\] 
In particular, the class $\alpha_1$ is balanced with respect to $(X_1,X_2)$. 

To minimize the remaining $\alpha_{8,i}$ terms, choose a pair of $H$-terms $\alpha_{H,1}+\alpha_{H,2}$, which exist as $2n+2g-4\ge 2$, and  apply Theorem \ref{t:mine1} to the classes $\alpha_{8,i}+\alpha_{H,1}+\alpha_{H,2}+\tilde cF+g\Gamma$ for $i\ge 2$ ($\alpha_{8,i}+\alpha_{H,1}+\alpha_{H,2}$ are unchanged in the map from $\alpha_0$ to $\alpha_1$).  This produces an equivalent class, with $\alpha_{RT}$ unchanged to $\alpha_1$,
\[
\alpha=\underbrace{\sum_{i=1}^n\alpha_{8,i}}_{\mbox{each }0\le |k_i|<\epsilon}+\sum_{j=1}^{2n+2g-6}\alpha_{H,j}+\tilde\alpha_{H,1}+\tilde\alpha_{H,2}+\underbrace{\alpha_{RT}}_{<\epsilon}+\tilde{\tilde c} F+g\Gamma
\] 
which has the $\alpha_{8,i}$ terms each minimized and is balanced with respect to $(X_1,X_2)$.

\item Case 2 of Theorem \ref{t:mine1} may produce an equivalent class of the form
\[
\alpha_1=\sum_{i=2}^n\alpha_{8,i}+\sum_{j=1}^{2n+2g-4}\alpha_{H,j}+a\mathcal R_1+bT_1+0\mathcal R_2+0T_2+\alpha_F.
\]
This class may not be balanced with respect to $(X_1,X_2)$ as the Theorem allows no control over the coefficients $(a,b)$.

As before, choose a pair of $H$-terms $\alpha_{H,1}+\alpha_{H,2}$ and  apply Theorem \ref{t:mine1} to the classes $\alpha_{8,i}+\alpha_{H,1}+\alpha_{H,2}+\tilde cF+g\Gamma$ for $i\ge 2$.  This again produces an equivalent class
\[
\alpha_2=\underbrace{\sum_{i=1}^n\alpha_{8,i}}_{\mbox{each }0\le |k_i|<\epsilon}+\sum_{j=1}^{2n+2g-6}\alpha_{H,j}+\tilde\alpha_{H,1}+\tilde\alpha_{H,2}+a\mathcal R_1+bT_1+0\mathcal R_2+0T_2+\tilde{\tilde c} F+g\Gamma
\] 
where either each term of $\tilde\alpha_{H,i}=a_iA_i+b_iB_i$ has magnitude $|a_i|,|b_i|<\epsilon$ or
\[
\tilde\alpha_{H,1}+\tilde\alpha_{H,2}=0A_1+0B_1+a_2A_2+b_2B_2.
\]
Now apply an interchange map to the classes $\tilde\alpha_{H,1}$ and $a\mathcal R_1+bT_1$ to obtain an equivalent class
\[
\alpha=\underbrace{\sum_{i=1}^n\alpha_{8,i}}_{\mbox{each }0\le |k_i|<\epsilon}+\sum_{j=1}^{2n+2g-6}\alpha_{H,j}+aA_1+bB_1+\tilde\alpha_{H,2}+\tilde\alpha_{RT}+\tilde{\tilde c} F+g\Gamma
\]
with $\tilde\alpha_{RT}=(-a_1)\mathcal R_1+(-b_1)T_1+0\mathcal R_2+0T_2$ where $|a_1|,|b_1|<\epsilon$.  Hence this class is balanced with respect to $(X_1,X_2)$ and each of the $\alpha_{8,i}$ terms has been minimized.
\end{enumerate}

\end{proof}

Observe that this result shows that given any $\alpha_0\in \mathcal P_M^{F}$ and any decomposition $M=X_1\#_{F_g}X_2$, there is a class $\alpha$ balanced with respect to $(X_1,X_2)$.  In particular, there is no need to further specify $(X_1,X_2)$.  {\it Hence, in the following, a class will simply be referred to as balanced.}

\begin{cor}\label{c:bl}  Let $N$ be the blow up of a relatively minimal elliptic surface $M$ with positive Euler characteristic $\chi(M)=12n$ and not diffeomorphic to $E(n,0,p_1,...,p_k)$, $n\in\{1,2\}$, ${F_g}$ a generic fiber and $\alpha_0-\sum_{i=1}^l e_iE_i\in \mathcal C_{N,K}^{F}$.  Suppose $N=X_1\#_{F_g}X_2$ is obtained as the fiber sum of elliptic surfaces $X_i$.  Then $\alpha_0-\sum_{i=1}^l e_iE_i$ is equivalent to a class $\alpha-\sum_{i=1}^l e_iE_i$ with $\alpha \in\mathcal P^F_M$ and $\alpha$ balanced with respect to $(X_1,X_2)$.

In particular, $\alpha$ can be chosen so that for $\epsilon>0$, each term $k_i$ in each of the $E_8$-components satisfies $0\le k_i<\epsilon$.  

\end{cor}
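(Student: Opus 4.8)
The plan is to carry out the balancing on $M$ via Lemma \ref{l:ba} and then lift the diffeomorphisms that realize it up to the blow-up $N$, where they will fix every exceptional class.

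First I would verify that the hypotheses force $\alpha_0\in\mathcal P^F_M$. Writing $E_1,\dots,E_l$ for the exceptional classes of the blow-up $N\to M$, these are orthogonal to the pullback classes and to $F$ and satisfy $E_i\cdot E_j=-\delta_{ij}$, so $(\alpha_0-\sum e_iE_i)\cdot F=\alpha_0\cdot F$ and $(\alpha_0-\sum e_iE_i)^2=\alpha_0^2-\sum e_i^2$. Since $\alpha_0-\sum e_iE_i\in\mathcal C_{N,K}^F\subset\mathcal P^F_N$, both quantities are positive, hence $\alpha_0\cdot F>0$ and $\alpha_0^2\ge\alpha_0^2-\sum e_i^2>0$; thus $\alpha_0\in\mathcal P^F_M$. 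Blowing down moreover carries the fiber-sum splitting $N=X_1\#_{F_g}X_2$ to a fiber-sum splitting of $M$ along $F_g$, to which Lemma \ref{l:ba} applies.

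Next I would apply Lemma \ref{l:ba} to $\alpha_0$ and this splitting to obtain a class $\alpha\in\mathcal P^F_M$, equivalent to $\alpha_0$ and balanced with respect to $(X_1,X_2)$, with all $E_8$-coefficients in the asserted range $0\le k_i<\epsilon$. The equivalence is realized by a finite composition $\Phi$ of maps from Section \ref{s:3}, each of spinor norm one and fixing $F$; by Theorems \ref{t:lo} and \ref{t:2} each of these is covered by a self-diffeomorphism of $M$ that is the identity on a neighborhood of the generic fiber $F_g$, and since the family is finite one common such neighborhood $\nu(F_g)$ serves for all of them. Now I would place the $l$ centers of the blow-up $N\to M$ in $\nu(F_g)\setminus F_g$ — permissible, as the diffeomorphism type of $N$ is independent of the centers, and this is precisely the normalization recorded in the discussion preceding Section \ref{s:3}. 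Then each of the chosen self-diffeomorphisms of $M$ is the identity near the blow-up locus, hence extends to a self-diffeomorphism of $N$ which is the identity near every exceptional sphere and in particular fixes each $E_i$. Composing, $\Phi$ lifts to a self-diffeomorphism of $N$ carrying $\alpha_0-\sum e_iE_i$ to $\alpha-\sum e_iE_i$, which is the assertion; the ``in particular'' clause is inherited verbatim from Lemma \ref{l:ba}.

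The only step demanding genuine care is this geometric one: one must use that all diffeomorphisms arising in the balancing process of Lemma \ref{l:ba} — reflections on $-2$-spheres lying in $M\setminus F_g$ together with maps of spinor norm one — can be realized simultaneously as the identity on one fixed neighborhood of $F_g$, so that the exceptional spheres may be housed inside it. This is exactly the content of Theorems \ref{t:lo} and \ref{t:2}, so no input beyond that lifting observation is needed and the remainder of the argument is routine bookkeeping.
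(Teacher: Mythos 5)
Your argument is correct and coincides with the paper's own proof: both rest on the observation that the finitely many diffeomorphisms realizing the balancing of Lemma \ref{l:ba} are each the identity on a neighborhood of $F_g$ (Theorem \ref{t:2}), so their composition fixes such a neighborhood, and placing the blow-up locus there lets the composition lift to $N$ without moving the exceptional classes. Your preliminary check that $\alpha_0\in\mathcal P^F_M$ is a small additional verification the paper leaves implicit, but the route is the same.
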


\begin{proof}
Theorem \ref{t:2} states that every diffeomorphism used in Lemma \ref{l:ba} is identity on a neighborhood of $F_g$.  The procedure in Lemma \ref{l:ba} uses finitely many such diffeomorphisms, hence their composition defines a diffeomorphism $\psi:M\rightarrow M$ that also fixes a neighborhood of $F_g$.  Choosing the blow-up locus to lie in this neighborhood ensures that the diffeomorphism $\psi$ can be applied to $N$ without changing the exceptional classes.

\end{proof}

\section{Symplectic Cones}\label{s:4}

Let $M$ be a smooth oriented 4-manifold.  A symplectic form $\omega$ on $M$ is a closed non-degenerate 2-form.  As $M$ is oriented, it is natural to restrict to forms $\omega$ compatible with the fixed orientation.  This means $\omega\wedge\omega>0$, or $[\omega]^2>0$.  Hence automatically $[\omega]\in\mathcal P_M$.  

\begin{definition}Let $M$ be a smooth oriented $4$-manifold and $V\subset M$ a smooth oriented submanifold.  \begin{enumerate}
\item Define the {\it symplectic cone of $M$} to be
\[
\mathcal{C}_M=\{\alpha\in H^2(M,\mathbb R)\;|\;[\omega]=\alpha, \; \omega\mbox{ is a symplectic form on }M\}. 
\]
\item A relative symplectic form on the pair $(M, V)$ is
an orientation compatible symplectic form on $M$ such that
$\omega\vert_V$ is an orientation compatible symplectic
form on $V$. 

\item The relative symplectic cone of $(M, V)$ is\vspace*{3pt}
\[
{\mathcal C}_M^V=\{\alpha\in H^2(M)|\;
[\omega]=\alpha,\;\omega \mbox{ is a relative symplectic form on $(M, V)$}\}.
\]

\item The cone of symplectic classes evaluating positively on $[V]$ is
\[
{\mathcal C}_M^{[V]}=\{\alpha\in\mathcal{C}_M\;|\;\alpha\cdot [V]>0\}.
\]
\end{enumerate} 
\end{definition}

The comments preceding the definition imply $\mathcal C_M\subset \mathcal P_M$.  Moreover, for $V$ to be $\omega$-symplectic, we must have $\omega|_V$ is a volume form or $[\omega]\cdot [V]>0$.  Hence
\begin{equation}\label{e:incl}
\mathcal C_M^V\subset{\mathcal C}_M^{[V]}\subset \mathcal P_M^{[V]}.
\end{equation}

If $M$ is non-minimal, then the exceptional curves provide further constraints on the symplectic classes.  Denote the following:\begin{enumerate}
\item $\mathcal E_M$ the set of cohomology classes whose Poincar\'e dual are represented by smoothly embedded spheres of self-intersection -1,
\item  $\mathcal K$ the set of symplectic canonical classes of $M$ and 
\item for $K\in\mathcal K$,
\[
\mathcal E_K=\{E\in \mathcal E_M|\;K\cdot E=-1\}.
\]
\end{enumerate}

\subsection{Relative Symplectic Cones for $ b^+(M)=1$}  To motivate this discussion, a first result for elliptic surfaces with $b^+=1$, irrespective of Euler number, is stated.  This is a consequence of Theorem 2.13, \cite{DL}.

\begin{theorem}\label{t:b10} Assume that $M$ is an elliptic surface with $b^+=1$ and $F_g$ is an oriented
 generic fiber such that $\mathcal C^{F_g}_M\ne \emptyset$.  Denote $F=[F_g]\in H_2(M,\mathbb Z)$.  Let 
\[
\mathcal K(F_g)=\{K\in\mathcal K\;|\;K\cdot F=0\}
\]
be the set of symplectic canonical classes of $M$ which evaluate to 0 on $F$.  For each $K\in\mathcal K(F_g)$, let $w$ be a symplectic form with $K_\omega=K$ and define $ \mathcal P^F_{M,+}$ to be the component of $\mathcal P^F_M$ containing $[\omega]$.  Then
\[
\bigsqcup_{K\in\mathcal K(F_g)}\mathcal C_{M,K}^F=\mathcal C_M^{F_g}
\]
where 
\[
\mathcal C_{M,K}^F=\{\alpha\in\mathcal P^F_{M,+}\;|\; \alpha\cdot E>0\;\forall E\in\mathcal E_K\}.
\]

\end{theorem}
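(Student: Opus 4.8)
The plan is to obtain Theorem \ref{t:b10} as the specialization to the pair $(M,F_g)$ of the general description of relative symplectic cones of $b^+=1$ four-manifolds given in \cite{DL}, Theorem 2.13. The first step is to record the two elementary facts that make this specialization legitimate. Since $\mathcal C^{F_g}_M\neq\emptyset$, there is a relative symplectic form $\omega$ on $(M,F_g)$, so $F_g$ is an embedded $\omega$-symplectic surface; being a generic fiber of the elliptic fibration it is a torus with $F^2=0$ and $F\neq 0$. Adjunction for this symplectic torus then gives $K_\omega\cdot F=0$ (as $2g-2=0$ for a torus and $F^2=0$), so the canonical class of every relative symplectic form on $(M,F_g)$ lies in $\mathcal K(F_g)$, and conversely every $K\in\mathcal K(F_g)$ pairs correctly with $F$. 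This explains why the index set appearing in \cite{DL}, Theorem 2.13 -- a priori all symplectic canonical classes pairing with $F$ to the adjunction value -- collapses to $\mathcal K(F_g)$.

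Next I would dispatch the easy inclusion $\mathcal C^{F_g}_M\subseteq\bigsqcup_{K\in\mathcal K(F_g)}\mathcal C^F_{M,K}$. Given $\alpha=[\omega]$ with $\omega$ a relative symplectic form, put $K=K_\omega$. Then $\alpha^2>0$ and $\alpha\cdot F=\int_{F_g}\omega>0$, so $\alpha\in\mathcal P^F_M$; moreover the light cone lemma identifies $\mathcal P^F_M$ with the single component of $\mathcal P_M$ on which $F$ evaluates positively, so $\alpha$ lies in the same component as the reference form, i.e. $\alpha\in\mathcal P^F_{M,+}$. For any $E\in\mathcal E_K$, the $b^+=1$ theory of exceptional classes shows that $E$ is represented by an embedded $\omega$-symplectic sphere, hence $\alpha\cdot E>0$, being the $\omega$-area of that sphere; thus $\alpha\in\mathcal C^F_{M,K}$. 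Disjointness of the union over distinct $K\in\mathcal K(F_g)$ is then the wall-and-chamber structure underlying \cite{DL}, Theorem 2.13: the hyperplanes $\{\alpha\cdot E=0\}$, $E\in\mathcal E_M$, separate the cones $\mathcal C^F_{M,K}$.

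The content of the theorem is the reverse inclusion $\mathcal C^F_{M,K}\subseteq\mathcal C^{F_g}_M$, and this is precisely what \cite{DL}, Theorem 2.13 supplies. Given $\alpha\in\mathcal P^F_{M,+}$ with $\alpha\cdot E>0$ for all $E\in\mathcal E_K$, that theorem produces a symplectic form with cohomology class $\alpha$ and canonical class $K$ for which, in addition, the class $F$ is represented by an embedded symplectic torus; combining the Li--Liu construction of a symplectic form in the prescribed chamber with an inflation along, and isotopy of, a Taubes/pseudoholomorphic representative of $F$ allows one to take that torus to be the fixed fiber $F_g$, producing a relative symplectic form representing $\alpha$. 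I expect this last point -- upgrading ``$\alpha$ is a symplectic class with canonical class $K$'' to ``$\alpha$ is a \emph{relative} symplectic class for $(M,F_g)$'', in particular arranging the symplectic surface to be the given fiber rather than merely a homologous torus -- to be the main obstacle; it is exactly the input imported from \cite{DL}, while the rest of the argument is the adjunction and light-cone bookkeeping sketched above.
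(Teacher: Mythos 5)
Your proposal follows exactly the route the paper takes: the paper offers no argument beyond the single sentence that the theorem ``is a consequence of Theorem 2.13, \cite{DL}'', and your write-up is simply a fleshed-out version of that specialization (adjunction forcing $K\cdot F=0$, the easy inclusion via positivity on symplectic spheres, connectedness of $\mathcal P^F_M$ from the light cone lemma, and the hard inclusion imported wholesale from \cite{DL}). The expansion is accurate and consistent with the surrounding discussion in the paper, so there is nothing to correct.
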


If $b^+(M)=1$, the cone $\mathcal P^F_M$ is connected (Lemma 2.2, \cite{DL}) and thus must lie in one of the two connected components of $\mathcal P_M$.

%
%
%

This implies, that if $K\in \mathcal K(F_g)$, then only one of $\mathcal C_{M,K}^F$ or $\mathcal C_{M,-K}^F$ is non-empty.  
This motivates Def. \ref{d:cmk}.

\begin{cor}\label{c:min}
Assume $M$ is as in Theorem \ref{t:b10}.  Assume further that $M$ is minimal. Then 
\[
\mathcal C_M^{F_g}=\mathcal C_{M,K_{min}}^{F}=\mathcal P^F_M
\]
where $K_{min}$ is given by \eqref{e:k}.
\end{cor}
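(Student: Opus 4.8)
The plan is to deduce Corollary \ref{c:min} from Theorem \ref{t:b10} by showing that, for a minimal elliptic surface $M$ with $b^+(M)=1$, the decomposition $\bigsqcup_{K\in\mathcal K(F_g)}\mathcal C_{M,K}^F$ collapses to the single term $\mathcal C_{M,K_{min}}^F$, and that this term is all of $\mathcal P^F_M$. The first task is to identify $\mathcal K(F_g)$. Since $M$ is minimal, all symplectic forms share a canonical class up to sign, and on a minimal elliptic surface the canonical class is $K_{min}=\delta F$ (up to torsion), by Eq.\ \eqref{e:k} and the surrounding discussion; hence the only symplectic canonical classes are $\pm K_{min}$. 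Because $K_{min}$ is a multiple of $F$, we have $K_{min}\cdot F = \delta F^2 = 0$, so both $K_{min}$ and $-K_{min}$ lie in $\mathcal K(F_g)$. As noted in the remark following Theorem \ref{t:b10}, of $\mathcal C_{M,K_{min}}^F$ and $\mathcal C_{M,-K_{min}}^F$ exactly one is non-empty; since $\mathcal C_M^{F_g}\neq\emptyset$ by hypothesis, the nonempty one is the one whose defining component $\mathcal P^F_{M,+}$ contains an actual symplectic class, and by relabelling we may call its canonical class $K_{min}$. Thus $\mathcal K(F_g)$ contributes only $\mathcal C_{M,K_{min}}^F$ to the disjoint union, giving $\mathcal C_M^{F_g}=\mathcal C_{M,K_{min}}^F$.

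Next I would show $\mathcal C_{M,K_{min}}^F = \mathcal P^F_M$. By definition $\mathcal C_{M,K_{min}}^F = \{\alpha\in\mathcal P^F_{M,+}\mid \alpha\cdot E>0\ \forall E\in\mathcal E_{K_{min}}\}$. Since $M$ is minimal, $\mathcal E_M=\emptyset$: there are no smoothly embedded $-1$-spheres, because the existence of such a class would, via Taubes–Seiberg-Witten theory for $b^+=1$ symplectic manifolds, force a symplectic $-1$-sphere and contradict minimality (equivalently, minimality of the symplectic manifold is a diffeomorphism invariant here and no exceptional classes exist). Therefore $\mathcal E_{K_{min}}=\emptyset$ and the constraint $\alpha\cdot E>0$ is vacuous, so $\mathcal C_{M,K_{min}}^F=\mathcal P^F_{M,+}$. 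It remains to observe that $\mathcal P^F_{M,+}=\mathcal P^F_M$: by Lemma 2.2 of \cite{DL}, quoted in the text, $\mathcal P^F_M$ is connected when $b^+(M)=1$, so its unique component containing $[\omega]$ is the whole of $\mathcal P^F_M$. Chaining these equalities yields $\mathcal C_M^{F_g}=\mathcal C_{M,K_{min}}^F=\mathcal P^F_M$.

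I expect the main obstacle to be the clean handling of the sign ambiguity in $K_{min}$ together with the orientation/component bookkeeping: one must be careful that the $K$ appearing in Theorem \ref{t:b10}'s union is genuinely $\pm K_{min}$ (using minimality and the K\"ahler-type uniqueness of the canonical class up to sign from \cite{FM},\cite{W}), and that the statement of the corollary is implicitly normalizing the sign of $K_{min}$ so that $\mathcal P^F_M$ rather than $\mathcal P^{-F}_M$ is the relevant cone. The genuinely geometric input — that minimality kills all exceptional classes — is standard for $b^+=1$ symplectic 4-manifolds via Taubes's work, but it is worth stating explicitly since it is what makes the $\mathcal E_K$-constraint disappear. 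Everything else is formal unwinding of definitions and the connectedness lemma.
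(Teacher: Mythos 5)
Your argument is correct and is essentially the reasoning the paper relies on: $K_{min}=\delta F$ pairs trivially with $F$, minimality empties $\mathcal E_{K}$ so the constraint in $\mathcal C_{M,K_{min}}^F$ is vacuous, only one sign of $K_{min}$ gives a non-empty cone, and connectedness of $\mathcal P^F_M$ for $b^+=1$ identifies $\mathcal P^F_{M,+}$ with $\mathcal P^F_M$. The only cosmetic point is that for the non-K\"ahler $b^+=1$ cases the uniqueness of the symplectic canonical class up to sign should be attributed to \cite{TJLL} rather than \cite{FM}, \cite{W}, as the paper itself does.
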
    

\subsubsection{$T^2\times S^2$}\label{s:s2t2}
Viewing this as an elliptic surface, let $\Gamma_{S^2}$ be the section, represented by a sphere.  This manifold has intersection form $H$ and it follows from Cor. \ref{c:min} that
\[
\mathcal C_{T^2\times S^2}^{F_g}=\mathcal P^F_{T^2\times S^2}.
\]
In fact
\[
\mathcal C_{T^2\times S^2, -2F}^{F_g}=\{aF+b\Gamma_{S^2}\;|\;a,b>0\}\mbox{ and  } \mathcal C_{T^2\times S^2, 2F}^{F_g}=\emptyset.
\]  
Consider now the non-minimal case.  Let $M=(T^2\times S^2)\#l\overline{\mathbb CP^2}$.  For each blow up, two exceptional spheres are generated with classes $E_i$ and $\Gamma-
E_i$.  The set $\mathcal K(F_g)$ is given by 
\[
\mathcal K(F_g)=\{\pm 2F\pm E_1\pm...\pm E_l\}.
\]
For each $K\in\mathcal K(F_g)$, let $\delta_i=K\cdot E_i$.  Then 
\[
\mathcal E_K=\{ -\delta_1E_1, \Gamma_{S^2}+\delta_1E_1,...,-\delta_lE_l,\Gamma_{S^2}+\delta_lE_l\}.
\]
The light cone lemma implies that
\[
\mathcal C_{T^2\times S^2\#l\overline{\mathbb CP^2}, 2F\pm E_1\pm...\pm E_l}^{F}=\emptyset.
\]

Hence, as a consequence of Theorem \ref{t:b10} and \cite{Bi2}, for the symplectic canonical class $-2F+\sum  E_i$, a class $\beta=aF+b\Gamma_{S^2}-\sum e_iE_i$ satisfying
\[
\beta^2>0,\;\;\beta \cdot F>0,\;\;\beta\cdot  E_i>0 \mbox{  and  }\beta\cdot (\Gamma_{S^2}- E_i)>0.
\]
lies in $\mathcal C_{T^2\times S^2\#l\overline{\mathbb CP^2}}^{F_g}$ and hence can be represented by a symplectic form making $ F_g$ symplectic.  

Similar results hold for $E(1)$, see Theorem \ref{t:be1}, but the set $\mathcal E_K$ is much larger and thus ensuring that $\alpha\cdot E>0$ is much more involved, see Lemma \ref{l:e1setup} and the proof of Theorem \ref{t:e1}.

\subsubsection{$\kappa(M)\ge 0$}  Assume first that $M$ is a relatively minimal elliptic surface.    Then by Cor \ref{c:min}, 
\[
\mathcal C^{F_g}_{M}=\mathcal C_{M,K_{min}}^F=\left\{\begin{array}{cc}\mathcal P_M&\kappa(M)=0,\\\mathcal  P^F_{M}&\kappa(M)=1.\end{array}\right.
\]

If $M=M_{min}\#l\overline{\mathbb CP^2}$, then it was shown in \cite{TJLL} that the set of symplectic canonical classes is given by 
\[
\mathcal K=\{\pm K_{min}\pm E_1\pm...\pm E_l\}.
\]
For each $K\in\mathcal K$, let $\delta_i=K\cdot E_i$.  Then, as $\kappa(M)\ge 0$, 
\[
\mathcal E_K=\{ -\delta_1E_1,...,-\delta_lE_l\}.
\]
If $F_g$ is a generic fiber of the elliptic fibration, then
\[
\mathcal C_{M, -K_{min}\pm E_1\pm...\pm E_l}^{F}=\emptyset.
\]

\begin{definition}\label{d:cmk}
Let $M$ be an elliptic surface with $\kappa(M)\ge 0$ and $F_g$ an oriented generic fiber.  \begin{enumerate}
\item Denote by
\[
\mathcal K_F=\{K_{min}\pm E_1\pm...\pm E_l\}\subset \mathcal K(F_g)
\]
the set of admissible symplectic canonical classes for $F_g$.
\item   Let $K\in\mathcal K_F$.  Then define
\[
\mathcal C_{M,K}^F=\{\alpha\in\mathcal P^F_{M}\;|\; \alpha\cdot E>0\;\forall E\in\mathcal E_K\}.
\]
\end{enumerate}
\end{definition}

It follows from Lemma 3.5, \cite{TJLL} that
\[
\mathcal C_M^{F_g}\subset \bigsqcup_{K\in\mathcal K(F_g)}\mathcal C_{M,K}^F.
\]

Guided by the $b^+=1$ result above and in light of \ref{e:incl}, it needs to be shown that for each $K\in\mathcal K_F$, 
\begin{equation}\label{e:K}
\mathcal C_{M,K}^F\subset \mathcal C_M^{F_g},
\end{equation}
while noting that in the relatively minimal case this is just the inclusion $\mathcal P_M^F\subset \mathcal C_M^{F_g}$. 

\subsection{The Relative Symplectic Cone and Fiber Sums}
Let $M=X\#_VY$, then if $X$ and $Y$ are symplectic manifolds, it was shown by Gompf \cite{Go2} (see also McCarthy-Wolfson \cite{MW}) that $M$ admits a symplectic structure.  Thus, ideally, to determine $\mathcal C^V_M$ one would "add" the relative symplectic cones $\mathcal C_X^V$ and $\mathcal C_Y^V$.  In the absence of rim tori, this was done in \cite{DL}.

\begin{theorem}\cite{DL}\label{t:38}  Suppose $M=X\#_VY$, the sum produces no rim components and $V$ has trivial normal bundle.  If $\mathcal C_*^V=\mathcal P_*^{[V]}$ holds on $X$ and $Y$, then $\mathcal C_M^V=\mathcal P_M^{[V]}$. 
\end{theorem}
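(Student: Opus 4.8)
The plan is to realize an arbitrary class $\alpha \in \mathcal{P}_M^{[V]}$ as a symplectic form via Gompf's fiber sum construction, using the hypothesis $\mathcal{C}_*^V = \mathcal{P}_*^{[V]}$ on $X$ and $Y$ to produce the pieces. First I would invoke the Mayer--Vietoris / excision decomposition of $H^2(M,\mathbb{R})$ for the fiber sum $M = X \#_V Y$: since the sum produces no rim components and $V$ has trivial normal bundle, any class $\alpha$ decomposes as $\alpha = \alpha_X + \alpha_Y + \alpha_V$ in the notation of \eqref{e:dec} (with the rim-torus term absent), where $\alpha_X$, $\alpha_Y$ come from $H^2$ of the respective pieces rel boundary and $\alpha_V$ records the $[V]$-direction and a dual ``section'' class $\Gamma$. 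The key point is that $[V]$ is a common primitive class in both summands and $\alpha\cdot[V] > 0$, so I can write $\alpha_V = c[V] + g\Gamma$ and distribute it between the two sides.

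The main step is a splitting lemma: choose a decomposition $\alpha_V = \alpha_{V_X} + \alpha_{V_Y}$ (splitting the ``section'' and fiber-direction coefficients) so that the two partial classes $\beta_X := \alpha_X + \alpha_{V_X}$ and $\beta_Y := \alpha_Y + \alpha_{V_Y}$ each have positive square and each evaluate positively on $[V]$. Because $[V]^2 = 0$, the square $\alpha^2$ is essentially additive across the splitting up to the $\Gamma\cdot[V]$ cross terms, so by pushing most of the $g\Gamma$ volume to whichever side needs it (concentrating volume, exactly as in the philosophy of Section~\ref{s:3} but here trivially since there are no rim tori to fight), one arranges $\beta_X^2 > 0$, $\beta_Y^2 > 0$, $\beta_X\cdot[V] > 0$, $\beta_Y\cdot[V] > 0$. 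Then $\beta_X \in \mathcal{P}_X^{[V]} = \mathcal{C}_X^V$ and $\beta_Y \in \mathcal{P}_Y^{[V]} = \mathcal{C}_Y^V$ by hypothesis, so each is represented by a relative symplectic form making $V$ symplectic.

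Finally I would apply Gompf's theorem \cite{Go2} (or McCarthy--Wolfson \cite{MW}): given relative symplectic forms $\omega_X$ on $(X,V)$ and $\omega_Y$ on $(Y,V)$ inducing area forms on $V$ of the \emph{same} total area (rescale one of them, which is harmless since the cones are open and invariant under positive scaling of the $\Gamma$-coefficient), the fiber sum $M = X \#_V Y$ carries a symplectic form $\omega$ whose cohomology class is the glued class $\beta_X + \beta_Y = \alpha$, with $V$ symplectic. This shows $\alpha \in \mathcal{C}_M^V$, giving $\mathcal{P}_M^{[V]} \subseteq \mathcal{C}_M^V$; the reverse inclusion is \eqref{e:incl}. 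The hard part is the bookkeeping in the splitting lemma: verifying that the cross terms involving $\Gamma$ and the distribution of $c[V]$ can always be chosen to keep both partial squares positive while matching the induced areas on $V$ — i.e.\ that the ``volume'' of $\alpha$ can genuinely be divided between the two summands without either piece degenerating. This is where one must be careful that $\alpha\cdot[V]>0$ (not just $\alpha^2>0$) is used, and it is precisely the input that fails in the presence of rim tori, which is why that hypothesis is imposed.
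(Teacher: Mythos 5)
Your proposal follows essentially the same route as the paper (and \cite{DL}): decompose $\alpha$, keep the same $\Gamma$-coefficient $g=\alpha\cdot[V]>0$ on both sides, shift the $[V]$-coefficient $c=c_X+c_Y$ between the two summands to make both partial squares positive, invoke $\mathcal C_*^V=\mathcal P_*^{[V]}$ on each piece, and glue by Gompf. The only small inaccuracy is the remark about rescaling one form to match areas on $V$ — this is unnecessary (and would change the glued class); the areas already agree because both pieces carry the same $\Gamma$-coefficient $g$.
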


The proof of this claim decomposes a class $\alpha\in\mathcal P_M^{[V]}$ into two classes $\alpha_X$ and $\alpha_Y$.  These both evaluate positively on $[V]$, but care must be taken to ensure that they square positively.  This can always be achieved by choosing the coefficient of $[V]$ in each term appropriately.  Then the claim follows by \cite{Go2}.   

The aim of the remainder of this section is to extend this result to include exceptional curves and rim components.  More specifically,
\begin{enumerate}
\item Theorem \ref{t:exc} allows for exceptional curves, but only finitely many and in $X$ and $Y$ these are disjoint from a neighborhood of the gluing fiber.  It does not allow for rim components.
\item Theorem \ref{t:s2t2} still does not allow rim components, but does allow one summand to admit finitely many exceptional curves, some of which may intersect the gluing fiber.
\item Theorem \ref{t:sym} finally deals with the rim components.  It does not address the presence of exceptional curves, these are hidden in the assumptions, should they be present.  This Theorem also requires the full use of the concept of a balanced class.
\end{enumerate}

\begin{theorem}\label{t:exc}
Suppose $M=X_1\#_{\tilde F_g}X_2$ is an elliptic surface and $F_g$ a generic smooth fiber.  Assume $M$ admits finitely many $K_M$-exceptional curves $E_1,..,E_l$ such that $E_i\cdot F=0$. Assume further the following:
\begin{enumerate}
\item The sum produces no rim components,
\item $X_i$ is non-minimal with finitely many exceptional $K_{X_i}$-curves $E_1,...,E_{l_i}$ with $E_i\cdot F=0$ (and disjoint from $\tilde F_g$) and
\item   $\mathcal C_{X_i,K_{X_i}}^F\subset \mathcal C_{X_i}^{F_g}$.

\end{enumerate}

Then $\mathcal C_{M,K}^F\subset \mathcal C_M^{F_g}$. 
\end{theorem}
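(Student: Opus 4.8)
The strategy is to reduce the fiber-sum statement to the two-summand gluing theorem of Gompf \cite{Go2}, exactly as in the proof of Theorem \ref{t:38}, but now keeping careful track of the exceptional curves. Start with a class $\alpha\in\mathcal C_{M,K}^F$, so $\alpha\in\mathcal P_M^F$ and $\alpha\cdot E>0$ for all $E\in\mathcal E_K$. Since the exceptional curves $E_1,\dots,E_l$ of $M$ all satisfy $E_i\cdot F=0$ and (after a diffeomorphism fixing a neighborhood of $\tilde F_g$, using Theorem \ref{t:2} / Corollary \ref{c:bl}) may be assumed disjoint from the gluing region, the blow-down $\alpha+\sum e_iE_i$ of $\alpha$ (where $e_i=\alpha\cdot E_i>0$) is a class of the form treated by the no-rim decomposition \eqref{e:dec}: it splits as $\beta_{X_1}+\beta_{X_2}+\beta_F$ with $\beta_F=cF+g\Gamma$ and the exceptional classes distributed, $l_1+l_2=l$, into the two sides according to which $X_i$ they live in.

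\textbf{Key steps.} First I would split $\alpha$ as $\alpha_{X_1}+\alpha_{X_2}$ along the fiber sum, writing $\alpha_F=\alpha_{F_{X_1}}+\alpha_{F_{X_2}}$ and choosing the coefficient of $F$ on each side large enough that both restricted classes $\alpha_{X_i}$ (now classes on $X_i$, including the relevant $-e_jE_j$ terms) square positively and evaluate positively on $F$; this is the standard observation used in Theorem \ref{t:38} that $F^2=0$ lets one shuttle volume into either summand freely. Second, I would check that each $\alpha_{X_i}$ lies in $\mathcal C_{X_i,K_{X_i}}^F$: it has positive square, pairs positively with $F$ by construction, and pairs positively with every $E\in\mathcal E_{K_{X_i}}$ — here one uses that the $K_{X_i}$-exceptional curves of $X_i$ are exactly (a subset of) the $E_j$ disjoint from $\tilde F_g$, so $\alpha_{X_i}\cdot E_j=e_j>0$ is inherited from $\alpha\cdot E_j>0$. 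Third, by hypothesis (3), $\mathcal C_{X_i,K_{X_i}}^F\subset\mathcal C_{X_i}^{F_g}$, so each $\alpha_{X_i}$ is represented by a symplectic form making the generic fiber of $X_i$ symplectic. Fourth, apply Gompf's fiber-sum construction \cite{Go2}: gluing the two relative symplectic forms along $F_g$ (with matching volumes, arranged by scaling, which does not affect membership in an open cone) produces a symplectic form on $M$ in the class $\alpha$ making $F_g$ symplectic, i.e. $\alpha\in\mathcal C_M^{F_g}$.

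\textbf{Main obstacle.} The delicate point is the bookkeeping between the global data ($K$, $\mathcal E_K$ on $M$) and the local data ($K_{X_i}$, $\mathcal E_{K_{X_i}}$ on each $X_i$) — one must verify that the canonical class $K$ on $M$ restricts/decomposes to canonical classes $K_{X_i}$ on the summands compatibly with the distribution of exceptional classes, so that the condition "$\alpha\cdot E>0$ for $E\in\mathcal E_K$" really does translate into the corresponding conditions on each side and nothing is lost or double-counted. This is where the hypothesis $E_i\cdot F=0$ and the assumption that the $E_i$ are disjoint from $\tilde F_g$ are essential: they guarantee the exceptional spheres survive intact in exactly one summand and do not interact with the gluing locus, so the no-rim decomposition is genuinely a direct sum at the level of the relevant sublattices. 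Once that compatibility is pinned down, the rest is the by-now-routine positivity juggling of Theorem \ref{t:38} plus a black-box invocation of \cite{Go2}.
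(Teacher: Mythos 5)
Your proposal is correct and follows essentially the same route as the paper: decompose $\alpha$ as $(\alpha_1-\sum e_iE_i)+cF+g\Gamma+(\alpha_2-\sum e_iE_i)$ with the exceptional classes distributed to the summand they live in, split $c=c_1+c_2$ so that both $\alpha_{X_i}$ have positive square while inheriting $\alpha_{X_i}\cdot E_j=e_j>0$, invoke hypothesis (3) to place each piece in $\mathcal C_{X_i}^{F_g}$, and conclude by the Gompf sum as in Theorem \ref{t:38}. The brief detour through the blow-down class is unnecessary but harmless.
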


\begin{proof}
The proof is identical to the proof of Theorem \ref{t:38} in concept.  Consider a class $\alpha\in \mathcal C_{M,K}^F$.  This can be written as
\[
\alpha=(\alpha_1-\sum_{i=1}^{l_1}e_iE_i)+cF+g\Gamma + (\alpha_2-\sum_{i=1}^{l_2}e_iE_i).
\]
This is decomposed into two classes as in \cite{DL} as
\[
\alpha_{X_i}=\alpha_i+c_iF+g\Gamma_i-\sum_{i=1}^{l_1}e_iE_i
\]
which satisfy $\alpha_{X_i}\cdot E_i=e_i>0$.  To ensure that $\alpha_{X_i}^2>0$ holds, choose $c_1$ so that $\alpha^2>\alpha_{X_1}^2>0$.  Then $c_2=c-c_1$ will ensure that $\alpha_{X_2}^2>0$.  The claim now follows as in \cite{DL}.
\end{proof}

\begin{theorem}\label{t:s2t2}
Suppose $M=M_m\#l\overline{\mathbb CP^2}$ is an elliptic surface and $F_g\subset M$ a generic smooth fiber.  Fix a symplectic canonical class $K\in\mathcal K_F$ on $M$ and  let $\mathcal E_K=\{E_1,..,E_l\}$.  Assume that $M_m$ is minimal with respect to these exceptional classes and that $\mathcal C_{M_m}^{F_g}=\mathcal P_{M_m}^F$.

Let $\alpha=\alpha_m+cF+g\Gamma-\sum_{i=1}^le_iE_i\in \mathcal C_{M,K}^F$.  Assume for all \[
0<\epsilon<<\min\{1,\alpha^2-\sum e_i^2\}
\]
 there exists a diffeomorphism $\psi_{\epsilon}:M_m\rightarrow M_m$ such that $\alpha_m+cF+g\Gamma$ is mapped to a class $\alpha_m^++c_1\tilde F+ g_1\tilde\Gamma$ with $0<g_1<\epsilon$.  Then $\alpha\in \mathcal C_{M}^{F_g}$.

In particular, if for every $\alpha$ and small enough $\epsilon>0$  such a diffeomorphism exists, then $\mathcal C_{M,K}^F\subset \mathcal C_{M}^{F_g}$.

\end{theorem}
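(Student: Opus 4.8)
The plan is to mimic the decomposition argument of Theorem \ref{t:38} (and Theorem \ref{t:exc}), but to first move the class into a position where the fiber-sum decomposition genuinely splits it, using the hypothesized diffeomorphism $\psi_\epsilon$ to make the "section" coefficient $g$ as small as we like. The point of the smallness is that the exceptional curves $E_i$ may intersect $\tilde F_g$ nontrivially (unlike in Theorem \ref{t:exc}), so when we split $\alpha$ across the fiber sum $M = M_m \#_{\tilde F_g}(l\,\overline{\mathbb{CP}^2})$ — rather, across the splitting realizing the blow-ups near $F_g$ — the exceptional part and the section part are entangled. Concretely, after applying $\psi_\epsilon$ we may assume $\alpha = \alpha_m^+ + c_1\tilde F + g_1\tilde\Gamma - \sum e_i E_i$ with $0 < g_1 < \epsilon$ and $0<\epsilon \ll \min\{1,\alpha^2 - \sum e_i^2\}$; note $\psi_\epsilon$ fixes $[F_g]$ up to the allowed ambiguity and the $e_i$ are unchanged (the blow-up locus sits in the neighborhood of $F_g$ that the relevant diffeomorphisms avoid), and $\alpha$ remains in $\mathcal{C}_{M,K}^F$, i.e. $\alpha^2>0$, $\alpha\cdot F>0$, $\alpha\cdot E_i = e_i > 0$.

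Next I would split $\alpha$ as $\alpha_{X} + \alpha_{Y}$ where $X = M_m$ (minimal part) and $Y = l\,\overline{\mathbb{CP}^2}$, following the recipe of \cite{DL}: put $\alpha_X = \alpha_m^+ + c_X \tilde F + g_1\tilde\Gamma$ and $\alpha_Y = c_Y\tilde F - \sum e_i E_i$, with $c_X + c_Y = c_1$. The constraint $\alpha_Y\cdot E_i = e_i > 0$ holds automatically, and $\alpha_Y^2 = 2 c_Y g'_1 \cdot(\cdots) - \sum e_i^2$ can be made positive by choosing $c_Y$ large — here the role of $g_1$ being small is to control the cross term $\alpha_X\cdot\alpha_Y$-type errors so that $\alpha_X^2 > 0$ survives after taking the volume needed for $\alpha_Y$ out of $c_1$; quantitatively one arranges $\alpha^2 > \alpha_X^2 > 0$, which is possible precisely because $\epsilon \ll \alpha^2 - \sum e_i^2$. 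Then $\alpha_X \in \mathcal{P}_{M_m}^F = \mathcal{C}_{M_m}^{F_g}$ by hypothesis, so $\alpha_X$ is represented by a relative symplectic form on $(M_m, F_g)$; and $\alpha_Y \in \mathcal{C}_{l\,\overline{\mathbb{CP}^2}}^{F_g}$ — the relative cone of a blow-up of a neighborhood, which is understood — by the light-cone/inflation results of \cite{Bi2} together with the constraints just verified. Gompf's fiber-sum construction \cite{Go2} then yields a symplectic form on $M$ in the class $\alpha$ making $F_g$ symplectic, i.e. $\alpha \in \mathcal{C}_M^{F_g}$. The last sentence of the theorem is then immediate: if such $\psi_\epsilon$ exists for every $\alpha \in \mathcal{C}_{M,K}^F$ and all small $\epsilon$, the above shows $\mathcal{C}_{M,K}^F \subset \mathcal{C}_M^{F_g}$.

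The main obstacle I anticipate is the bookkeeping in the splitting step: one must show that the single real parameter $c_1$ (the total $\tilde F$-coefficient) can be partitioned as $c_X + c_Y$ so that \emph{both} $\alpha_X^2 > 0$ and $\alpha_Y^2 > 0$ hold simultaneously, in the presence of the $-\sum e_i^2$ deficit and with $\tilde\Gamma$, $\tilde F$, the $E_i$ and $\alpha_m^+$ all interacting through the intersection form. This is exactly where the hypothesis $0 < g_1 < \epsilon \ll \min\{1, \alpha^2 - \sum e_i^2\}$ is used: it guarantees that the "cost" of lending enough $\tilde F$-volume to $Y$ to overcome $\sum e_i^2$ is a small perturbation of $\alpha_X$, so positivity of $\alpha_X^2$ is not destroyed. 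A secondary point requiring care is that $\psi_\epsilon$ genuinely preserves the exceptional classes $E_i$ and the relative-symplectic data near $F_g$; this is handled exactly as in Corollary \ref{c:bl}, by placing the blow-up locus in the neighborhood of $F_g$ on which every diffeomorphism in play is the identity (Theorem \ref{t:2}). Once the numerology of the split is pinned down, the rest is a direct invocation of \cite{Go2}, \cite{DL}, and \cite{Bi2}.
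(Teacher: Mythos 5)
There is a genuine gap, and it sits exactly at the step your proposal treats as routine. You split $M$ as ``$M_m\#_{\tilde F_g}(l\,\overline{\mathbb CP^2})$'' and assign $\alpha_Y=c_Y\tilde F-\sum e_iE_i$ to the second summand, then invoke \cite{Go2}. But the blow-up $M=M_m\# l\overline{\mathbb CP^2}$ is an ordinary connected sum, not a fiber sum: $l\,\overline{\mathbb CP^2}$ contains no square-zero torus, carries no class $\tilde F$, has negative definite intersection form (so your $\alpha_Y$ would have $\alpha_Y^2=-\sum e_i^2<0$), and has no relative cone $\mathcal C^{F_g}_{l\overline{\mathbb CP^2}}$ to which \cite{Bi2} could apply. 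Gompf's construction therefore does not produce the form you want. The actual difficulty the theorem is designed to overcome is a packing problem: to realize $\alpha$ one must find $l$ disjoint symplectic balls of capacities $e_i$, disjoint from $F_g$, inside a form on $M_m$ representing $\alpha_m+cF+g\Gamma$. Your write-up never produces these balls.

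The paper's proof resolves this by a different decomposition: it writes $M_m$ as the \emph{trivial} fiber sum $M_m\#_{\tilde F_g}(T^2\times S^2)$, with $F_g^+=\psi_\epsilon(F_g)$ kept in the $M_m$ summand, and chooses $\tilde c$ so that $0<\alpha_{M_m}^2<\epsilon$ while $\alpha_{T^2\times S^2}=\tilde c\tilde F+g_1\Gamma_{S^2}$ carries essentially all of the square. The exceptional classes are then handled inside $(T^2\times S^2)\#l\overline{\mathbb CP^2}$, whose relative cone is known from Theorem \ref{t:b10}, Section \ref{s:s2t2} and \cite{Bi2}; this is where the extra positivity conditions $\alpha_{bu}\cdot(\Gamma_{S^2}-\tilde E_j)=\tilde c-e_j>0$ must be verified (your proposal has no analogue of this check, because your $Y$ contains no class $\Gamma_{S^2}$), and it is here that the bound $\epsilon<\frac{1}{2\max\{e_j\}}\sum e_i^2$ enters. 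The balls realizing the blow-up are then embedded in the $T^2\times S^2$ piece and transported back to $M_m$ by $\psi_\epsilon$, after which one blows up the original form. Your instinct about why $g_1<\epsilon$ matters and about protecting the $E_i$ under $\psi_\epsilon$ (as in Corollary \ref{c:bl}) is sound, but without the $T^2\times S^2$ summand and the packing input the argument does not close.
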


\begin{proof}  Let $K_m$ be the symplectic canonical class induced from $K$ on $M_m$ and $K_m^\epsilon$ the pull-back under $\psi_{\epsilon}^{-1}$ on $M_m$.

Consider the class $\alpha^+=\alpha_m^++c_1\tilde F+g_1\tilde\Gamma$ with $0<g_1<\epsilon$ and let $F_g^+=\psi_{\epsilon}(F_g)\subset M_m$.   Decompose $M_m$ as the trivial fiber sum 
\[
M_m=M_m\#_{\tilde F_g}(T^2\times S^2)
\]
such that $F_g^+$ lies in the $M_m$-summand on the left.  Decompose $\alpha^+$ into two classes
\begin{itemize}
\item $\alpha_{M_m}=\alpha_m^++(c-\tilde c)\tilde F+ g_1 \Gamma_m$ and
\item $\alpha_{T^2\times S^2}=\tilde c \tilde F+ g_1 \Gamma_{S^2}.$
\end{itemize}
This fiber sum also splits the symplectic canonical class $K^\epsilon_m=(K_m^+,K_{T^2\times S^2})$.  Choose $\tilde c>0$ such that 
\[
0<\alpha_{M_m}^2=(\alpha^+)^2-2\tilde cg_1 <\epsilon
\]
 and thus $\alpha_{M_m}\in \mathcal C_{M_m}^{F^+_g}$ (represented by a symplectic form $\omega_{M_m}$).  This implies that $\alpha_{T^2\times S^2}^2\simeq (\alpha^+)^2$ and  hence $\alpha_{T^2\times S^2}\in\mathcal C^{\tilde F_g}_{T^2\times S^2}$ by Theorem \ref{t:b10}.  

Consider the class $\alpha_{bu}=\alpha_{T^2\times S^2}-\sum_{i=1}^l e_i\tilde E_i$.  Then $\alpha_{bu}\cdot \tilde F=g_1>0$.  The choice of $\tilde c$ described above ensures that  $\alpha_{bu}^2>0$ and the initial condition on the $e_i$ ensures that $\alpha_{bu}\cdot\tilde E_i>0$.
Finally, consider 
 \begin{equation}\label{e:bu}
 \alpha_{bu}\cdot (  \Gamma_{S^2}-\tilde E_j)=\tilde c-e_j.
 \end{equation}
 Note that 
 \[
 \alpha_{bu}^2=2\tilde c \tilde g-\sum e_i^2>0\;\;\Rightarrow \;\;
\tilde c - e_j>\frac{1}{2\tilde g}\sum e_i^2-e_j.
\]
Hence choosing 
\[
\epsilon<\frac{1}{2\max\{e_j\}}\sum e_i^2
\]
ensures that $\alpha_{bu}\cdot (\Gamma_{S^2}-\tilde E_j)>0$ holds.  By Section \ref{s:s2t2}, this would imply that $\alpha_{bu}$ is represented by a symplectic form that makes $\tilde F_g$ symplectic.  

This means that there exists a symplectic form $\omega$ representing $\alpha_{T^2\times S^2}$ which makes $\tilde F_g$ symplectic and which can be blown up $l$-times of weights $e_i$ to obtain a symplectic form in  $T^2\times S^2\#l\overline{\mathbb CP^2}$ which still makes $\tilde F_g$ symplectic.

Blowing up $l$-points in $S^2\times T^2$ symplectically involves removing symplectically embedded balls $\psi_i:(B^2(e_i),\omega_{st})\rightarrow (S^2\times T^2,\omega)$ corresponding to the weight $e_i$ and gluing back in a standard neighborhood for each. Use the diffeomorphism $\psi_\epsilon$ to define symplectic embeddings of these balls in $M_m$ with respect to the form $(\psi_{\epsilon})_*(\omega_{M_m}, \omega)$ in the class $\alpha_m+cF+g\Gamma$.  Note that the fiber $F_g$ is disjoint from these embeddings due to the choice of splitting with respect to $F_g^+$.

This ensures it is possible to blow up the original $(M_m,\alpha_m+cF+g\Gamma)$ to obtain a symplectic form $\omega$ representing the class $\alpha$ and which makes $F_g$ $\omega$-symplectic.  Note that after blowing up, this construction ensures that a fibration part exists in $M$, albeit of very small volume.  Hence $\alpha\in\mathcal C_{M}^{F_g}$.

%
\end{proof}

{\bf Remark:}  As $T^2\times S^2$ admits a full packing with respect to the symplectic class $\alpha_{T^2\times S^2}$ constructed above (see \cite{S}), it is interesting to consider if the manifolds studied in this Theorem all admit full packings.

In the presence of rim components arising in the sum, the methods of proof of the previous Theorems no longer apply in general.  However, if the rim components are removed, then a sum class can be shown to be symplectically represented and from this the original class can be obtained.  This motivates the following definition.

\begin{definition} \label{d:sbal}Let $M=X\#_{F_g}Y$ be an elliptic surface and $\alpha\in\mathcal P^F_M$ a balanced class.  Then $\alpha$ is sum balanced with respect to $(X,Y)$ if  the class $\alpha-(e_1\mathcal R_1+d_1T_1+e_2\mathcal R_2+d_2T_2)$ can be written as $\alpha_X+(c_X+c_Y)F+g(\Gamma_X+\Gamma_Y)+\alpha_Y$ such that $\alpha_*+c_*F+g\Gamma_*\in\mathcal C^{F_g}_*$.

\end{definition}

Lemma \ref{l:ba} shows that for most elliptic surfaces any class in $\mathcal P_M^F$ is equivalent to a balanced class.  If  this class is actually sum balanced and has no rim components, then \cite{Go2} places the class in the relative symplectic cone.  In the presence of rim components, the idea is to start with symplectic sum form representing the class $\alpha-(e_1\mathcal R_1+d_1T_1+e_2\mathcal R_2+d_2T_2)$.  This class needs to be modified to account for the missing rim components.

The symplectic form can be modified using submanifolds of $M$.  In the presence of Lagrangian submanifolds, the symplectic form can be modified to obtain symplectic submanifolds, this is a modification of a result in \cite{H} and \cite{Go2}.

\begin{theorem}\label{t:lag}Let $(M,\omega)$ be a symplectic 4-manifold and $L_1, L_2$ closed connected embedded oriented Lagrangian surface in $M$ which intersect each other transversely and which generate a summand in the intersection form.  Suppose that the classes are linearly independent in $H_2(M,\mathbb R)$.  Then there exists a symplectic structure $\tilde \omega$ on $M$ with the following properties:\begin{enumerate}
\item $\tilde \omega$ is deformation equivalent to $\omega$, 
\item both $L_i$ are $\tilde\omega$-symplectic,
\item  $\tilde\omega$ can be chosen such that $[\tilde\omega]\cdot [L_i]=l_i$ have any given sign,
\item $[\tilde\omega]-l_1[L_1]-l_2[L_2]=[\omega]$ and 
\item any $\omega$-symplectic surface disjoint from the Lagrangians $L_i$ is $\tilde\omega$-symplectic.  
\end{enumerate}
Moreover, $\omega$ and $\tilde \omega$ differ only on a neighborhood of $L_1\cup L_2$.

\end{theorem}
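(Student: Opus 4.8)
The plan is to localize the construction near $L_1 \cup L_2$ and perform an inflation along each Lagrangian after first deforming $\omega$ so that the $L_i$ become symplectic. First I would recall the Weinstein neighborhood theorem: a tubular neighborhood of a Lagrangian surface $L_i$ is symplectomorphic to a neighborhood of the zero section in $T^*L_i$, and since $L_1, L_2$ intersect transversely in finitely many points and span a summand of the intersection form, I may arrange the two Weinstein neighborhoods to overlap only in standard Darboux balls around the intersection points. The core of the argument is then the standard ``Lagrangian-to-symplectic'' deformation (this is the mechanism behind the cited results of \cite{H} and \cite{Go2}, and of Gompf's rim-torus trick): one modifies $\omega$ inside a Weinstein neighborhood of $L_i$ by adding a closed $2$-form $\eta_i$ supported there, chosen so that $\eta_i$ is a large positive multiple of a bump form Poincar\'e dual to $L_i$; for a suitable cutoff this makes $\omega + \eta_i$ nondegenerate everywhere and positive on $T L_i$, while $[\omega + \eta_i] = [\omega] + l_i[L_i]$ with $l_i$ of the prescribed sign and magnitude (the sign is controlled by whether one inflates along $L_i$ with the orientation or against it; positivity of $\omega+\eta_i$ away from $L_i$ is automatic because $\eta_i$ is a nonnegative form there, and near $L_i$ it follows from the normal-form computation in $T^*L_i$). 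Doing this for $i=1$ and $i=2$ in succession, with supports chosen disjoint except in the Darboux balls at $L_1 \cap L_2$ where a further local interpolation handles both at once, produces $\tilde\omega$.

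The properties then follow essentially by construction. Deformation equivalence (1) holds because $\omega_t = \omega + t(\eta_1 + \eta_2)$ is a path of symplectic forms once the inflation parameters are large enough simultaneously — here the hypothesis that $[L_1], [L_2]$ are linearly independent in $H_2(M,\mathbb R)$ is what guarantees that $[\omega] + l_1[L_1] + l_2[L_2] \ne [\omega]$ and, more importantly, that the two inflations do not interfere cohomologically, so a single convex path works. Property (2) is the design goal of the inflation; (3) is the freedom in the sign of $\eta_i$; (4) is the cohomological bookkeeping $[\tilde\omega] = [\omega] + l_1[L_1] + l_2[L_2]$; and (5) together with the final ``differ only on a neighborhood of $L_1 \cup L_2$'' clause is immediate since $\eta_1 + \eta_2$ is supported in $N(L_1) \cup N(L_2)$, so any $\omega$-symplectic surface disjoint from the Lagrangians sees the unchanged form $\omega$.

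The main obstacle I anticipate is the bookkeeping at the transverse intersection points $L_1 \cap L_2$: near such a point the two Weinstein neighborhoods genuinely overlap, and one must produce a single closed $2$-form, supported in a small ball, that simultaneously inflates along both local Lagrangian sheets (the two coordinate Lagrangian planes in a Darboux ball $(\mathbb R^4,\omega_{st})$) by the prescribed amounts while staying nondegenerate — including controlling the cross terms so that the inflated form remains positive on each sheet and on the ambient $4$-ball. This is a purely local, hands-on construction with $2$-forms on $\mathbb R^4$, but it is where the transversality hypothesis and the ``generate a summand'' hypothesis are really used (the latter ensures the local intersection pattern is the standard hyperbolic one, with no complications from self-intersections or from the $L_i$ being homologically entangled with the rest of $H_2$). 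Once this local model is in place, patching it to the global inflations away from the intersection points via a partition of unity subordinate to the cover $\{N(L_1), N(L_2), M \setminus (L_1 \cup L_2)\}$ is routine, and the cohomology class is computed by integrating against a basis dual to $L_1, L_2$ and the remaining summand.
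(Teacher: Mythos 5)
There is a genuine gap, and it sits at the heart of your construction. You propose to make each $L_i$ symplectic by adding ``a large positive multiple of a bump form Poincar\'e dual to $L_i$'' supported in its own Weinstein neighborhood, treating the two inflations as essentially independent. This cannot work. If $L_i$ is Lagrangian then $\int_{L_i}\omega=0$, and the Thom form $\eta_i$ of $L_i$ satisfies $\int_{L_i}\eta_i=[L_i]^2=-\chi(L_i)$, which is $0$ when $L_i$ is a torus (the only case that actually occurs in this paper: rim tori and their duals) and negative when $L_i$ is a sphere. So no multiple of $\eta_i$ alone, large or small, can make $\int_{L_i}\tilde\omega$ positive, and a surface with $\int_{L_i}\tilde\omega\le 0$ cannot be $\tilde\omega$-symplectic with its given orientation. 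The positive area on $L_1$ must come from the \emph{other} Thom form via the transverse intersection, $\int_{L_1}\eta_2=[L_1]\cdot[L_2]\ne 0$; this is precisely why the hypothesis that the pair generates a summand (hence has invertible intersection matrix) is needed, and it is the content of the $2\times 2$ period system $\bigl(\int_{L_i}\eta_j\bigr)(u,v)^T=(a_1,a_2)^T$ that the paper's proof solves (perturbing $\eta_1$ if the determinant happens to vanish). Your proposal never couples the two forms, so claims (2) and (3) fail as argued.

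Two further points. First, the size of the perturbation is backwards: the correct mechanism (Gompf's, as packaged in Theorem 10 of \cite{H}, on which the paper's proof leans for nondegeneracy) is $\tilde\omega=\omega+t(\eta+\mathrm{exact})$ for \emph{small} $t>0$, with nondegeneracy coming from openness of the symplectic condition and positivity on the $L_i$ checked in the Weinstein model; your assertion that positivity of $\omega+\eta_i$ away from $L_i$ is ``automatic because $\eta_i$ is a nonnegative form there'' is not a valid argument --- adding a closed $2$-form of any fixed sign convention to a symplectic form on a $4$-manifold can destroy nondegeneracy unless the perturbation is small. Second, once the perturbation is written as $t(u\eta_1+v\eta_2)$ with $[\eta_i]=\mathrm{PD}[L_i]$, claim (4) and the locality statement are immediate, so your worry about a delicate local model at the intersection points, while reasonable if one were rebuilding Gompf's lemma from scratch, is not where the real work lies; the missing ingredient is the cohomological coupling of the two Lagrangians through their intersection number.
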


\begin{proof}The proof of this theorem is essentially the same as the proof of Theorem 10, \cite{H}, however more care must be taken in choosing the class $\eta$ to ensure that the fourth claim holds.  To achieve this, let $\eta_i$ denote the Thom form of the submanifold $L_i$, this can be chosen to have support on any given tubular neighborhood of $L_i$ (Prop 6.25, \cite{BT}).  Then consider the closed 2-form $\eta(s,t)=u\eta_1+v\eta_2$ for $u,v\in\mathbb R$.  Given $a_1,a_2$ as in the \cite{H}, the system
\[
\left(\begin{matrix}\int_{L_1}\eta(u,v)\\\int_{L_2}\eta(u,v)\end{matrix}\right)=\left(\begin{matrix}\int_{L_1}\eta_1&\int_{L_1}\eta_2\\\int_{L_2}\eta_1 &\int_{L_2}\eta_2\end{matrix}\right)\left(\begin{matrix}u\\v \end{matrix}\right)=\left(\begin{matrix} a_1\\a_2\end{matrix}\right)
\]
has a solution whenever the matrix in the middle term has non-vanishing determinant.  In the case that the determinant does vanish, modify $\eta_1$ away from the tubular neighborhood of $L_2$ on which $\eta_2$ is supported by a small closed bump 2-form such that the new matrix has non-vanishing determinant.  Then define $\eta$ to be the form $\eta(u,v)$ solving this system.  

Notice that $[\eta]$ lives in the span of $[L_i]$ and contains no other classes.   The symplectic form produced in \cite{H} has the form
\[
\tilde \omega=\omega+t(\eta+\mbox{ exact forms})
\]
where $t$ is a small positive real number and thus $[\tilde\omega]=[\omega]+t[\eta]$.  Hence $[\tilde\omega]-l_1[L_1]-l_2[L_2]=[\omega]$ where $l_i=ta_i$.

\end{proof}

%

On $M=X\#_{F_g}Y$, this result will be used to reintroduce rim components into a symplectic form $\alpha_X+(c_X+c_Y)F+g\Gamma+\alpha_Y$.

A further method is inflation, which modifies a symplectic class $[\omega]$ using symplectic surfaces in $M$.
 
\begin{lemma}\label{l:infl} (\cite{H}, \cite{Bi}, \cite{LM}, \cite{M2}) 
Let $(M,\omega)$ be a symplectic 4-manifold and $V_1, V_2\subset M$ closed connected symplectic surfaces with $[V_i]^2\ge 0$ and which intersect transversely in a single positive point.  Then for every $\epsilon_i\ge 0$, the class
\[
[\omega]+\epsilon_1[V_1]+\epsilon_2[V_2]
\]
is represented by a symplectic form $\tilde \omega$ such that\begin{enumerate}
\item  any $\omega$-symplectic surface $Z$ meeting each $V_i$ non-negatively and transversely is $\tilde\omega$-symplectic for any choice of $\epsilon_i$ and 
\item $\omega$ and $\tilde \omega$ differ only on a neighborhood of $V_1\cup V_2$. 
\end{enumerate}
\end{lemma}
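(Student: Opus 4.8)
The plan is to reduce the statement to the standard single-surface inflation construction and then patch the two constructions together across the intersection point. First I would recall the basic building block: if $V\subset M$ is a connected symplectic surface with $m=[V]^2\ge 0$, then there is a closed $2$-form $\rho_V$, Poincar\'e dual to $[V]$ and supported in an arbitrarily small tubular neighborhood $N(V)$, such that $\omega+t\rho_V$ is symplectic for \emph{every} $t\ge 0$ and such that $\rho_V$ restricts non-negatively to any oriented tangent plane meeting $V$ positively. To produce $\rho_V$ one identifies $N(V)$ with the unit disk bundle of the normal line bundle $L\to V$, of degree $m$, equips $L$ with a Hermitian metric and compatible connection $\nabla$, and builds $\rho_V$ out of $d(\beta(r^2)\,\alpha)$ together with a term $\pi^{*}\sigma$, where $\alpha$ is the transgression $1$-form of $\nabla$ away from the zero section, $\beta$ a suitable cutoff in the radial variable, $\pi$ the projection, and $\sigma$ a closed $2$-form on $V$ with $\int_V\sigma=m$ chosen to be $\ge 0$; it is precisely the hypothesis $m\ge 0$ that allows $\sigma\ge 0$ and hence keeps $\omega+t\rho_V$ non-degenerate for all $t\ge 0$ (when $m=0$ the bundle is trivial and $\rho_V=\pi^{*}\sigma$). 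This is the content of \cite{LM}, \cite{M2}, \cite{Bi}.

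Next I would arrange compatibility at the unique intersection point $p=V_1\cap V_2$. After shrinking $N(V_1),N(V_2)$ so that they meet only inside a Darboux chart around $p$, I use that a positive transverse intersection of two symplectic surfaces is symplectomorphic to the model of the two coordinate $2$-planes in $(\mathbb R^4,\omega_{st})$; in that chart I take $\rho_{V_1}$ and $\rho_{V_2}$ to be cutoffs of the standard area forms on the complementary coordinate $2$-planes, so that
\[
\tilde\omega=\omega+\epsilon_1\rho_{V_1}+\epsilon_2\rho_{V_2}
\]
is manifestly non-degenerate near $p$ for every $\epsilon_1,\epsilon_2\ge 0$, the positivity of the intersection guaranteeing that the two added terms pair up the complementary planes with matching orientation. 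Away from $p$ the neighborhoods $N(V_1)$ and $N(V_2)$ are disjoint, so on each of them only one correction term appears and the single-surface argument applies verbatim. Hence $\tilde\omega$ is symplectic, it represents $[\omega]+\epsilon_1[V_1]+\epsilon_2[V_2]$, and claim (2) is immediate since $\tilde\omega-\omega$ is supported in $N(V_1)\cup N(V_2)$.

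For claim (1), let $Z$ be an $\omega$-symplectic surface meeting each $V_i$ transversely and non-negatively. Outside $N(V_1)\cup N(V_2)$ one has $\tilde\omega|_Z=\omega|_Z>0$. Near a point $q\in Z\cap V_i$ the intersection is positive, so in the local model $\rho_{V_i}|_Z$ is a non-negative multiple of the area form on $Z$ determined by $\omega|_Z$; adding $\epsilon_i\rho_{V_i}|_Z$ (and, if $q=p$, also $\epsilon_j\rho_{V_j}|_Z$) can only increase $\tilde\omega|_Z$. Thus $\tilde\omega|_Z>0$ everywhere, i.e.\ $Z$ is $\tilde\omega$-symplectic, for any choice of $\epsilon_i\ge 0$.

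The main obstacle is exactly the simultaneous control at the intersection point: one must choose the two Thom-type forms so that $\omega+\epsilon_1\rho_{V_1}+\epsilon_2\rho_{V_2}$ stays non-degenerate over the \emph{full} range $\epsilon_i\in[0,\infty)$ rather than for small parameters only, and so that the pointwise positivity needed for claim (1) survives along $Z$ near $p$. This is handled by doing all of the work in a single Darboux chart adapted to both surfaces, modeled on $(\mathbb R^4,\omega_{st})$, after which everything reduces to bookkeeping with radial cutoff functions; the statement in the form used here is the one in \cite{H}.
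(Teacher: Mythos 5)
The paper itself offers no proof of this lemma; it is quoted from \cite{H}, \cite{Bi}, \cite{LM}, \cite{M2}, and your reconstruction follows exactly the route of those references: a fibrewise Thom-type form plus a non-negative base term $\pi^{*}\sigma$, whose existence is precisely where $[V_i]^2\ge 0$ enters; a product model in a chart at the intersection point; disjoint supports elsewhere. One minor imprecision: a positive transverse intersection of two symplectic surfaces is not on the nose symplectomorphic to the pair of coordinate planes in $(\mathbb R^4,\omega_{st})$ --- you first need the standard local isotopy making the intersection $\omega$-orthogonal at $p$, after which Darboux gives the product model.

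The substantive gap is in your verification of claim (1). You control $\tilde\omega|_Z$ outside $N(V_1)\cup N(V_2)$ and on small disks around the points of $Z\cap V_i$, but say nothing about the part of $Z$ that travels through $N(V_i)$ \emph{without} meeting $V_i$. On such a piece the correction is essentially $\epsilon_i\,\pi^{*}\sigma$ (plus the horizontal part of $d(\beta\alpha)$), and if $\pi|_Z$ is orientation-reversing there this restricts negatively to $Z$; for large $\epsilon_i$ this destroys $\tilde\omega|_Z>0$. This cannot be repaired by a cleverer pointwise choice: a compactly supported closed form dual to $[V_i]$ that restricted non-negatively to every $\omega$-positive tangent plane off $V_i$ would have to vanish on all Lagrangian planes, hence be a pointwise multiple of $\omega$, which is impossible in general. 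You need one of the two standard fixes: either fix $Z$ (or a finite list of such surfaces, which is all that is needed in Theorem \ref{t:sym}) \emph{before} building $\tilde\omega$ and shrink the tubular neighborhoods until $Z\cap N(V_i)$ consists of small transverse disks around $Z\cap V_i$, where your positivity argument does apply; or run the taming version --- choose a compatible $J$ making $V_1$, $V_2$ and $Z$ holomorphic, build $\rho_{V_i}$ with $\rho_{V_i}(v,Jv)\ge 0$, and conclude that $\tilde\omega$ tames $J$ for all $\epsilon_i\ge 0$, so every $J$-holomorphic surface remains symplectic.
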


Inflation will be used in Theorem \ref{t:sym} to recover the rim components in a class $\alpha\in\mathcal P_M^F$.

The following theorem describes how to use these methods to produce a symplectic class with the correct rim components.  This follows a method used in \cite{H}.  The key idea is to start with a sum symplectic form $\omega$  in the class obtained by removing the rim classes from $\alpha$.  The form $\omega$ now needs to be modified to obtain a symplectic form $\tilde \omega$ that represents $\alpha$.  This is achieved by using Theorem \ref{t:lag} and Lemma \ref{l:infl} to reintroduce the rim components.  Note that both results modify the form $\omega$ only in a neighborhood of the representatives of $\mathcal R_i$ and $T_i$.

\begin{definition}\label{d:pfc}  Assume $M$ is an elliptic surface over a genus $g$ surface $\Sigma_g$ and $\omega$ a symplectic form on $M$ making $F_g$ symplectic. Assume there exists an open set $U\subset \Sigma_g$ such that
\begin{enumerate}
\item over $U$, $M$ is presented as a Lefschetz fibration with $F_g$ a fiber and at least 2 singular fibers whose vanishing cycles form a basis for $\pi_1(F_g)$ (matching vanishing cycles) and
\item $\omega$ is compatible with this fibration structure.
\end{enumerate}
Then $\omega$ is called partially fibration compatible (pfc) at $F_g$.  A class $\alpha\in H^2(M,\mathbb R)$ is pfc at $F_g$ if it has a pfc at $F_g$ representative. 

\end{definition}

The pfc property will be needed to construct certain representatives of the rim classes as will the following property.  

\begin{definition}Let $M$ be an elliptic surface, $F_g$ a smooth generic fiber and $X$,$Y$ elliptic surfaces such that $X\#_{F_g}Y=M$.  The sum $X\#_{F_g}Y$ is called a good sum for $M$ if either
\begin{enumerate}
\item the sum produces no rim components or
\item given any two pfc at $F_g$ classes in $X$ and $Y$, the diffeomorphism implicit in the fiber sum is chosen to glue the matching vanishing cycles from the X and Y side along their boundaries to generate two spheres.
\end{enumerate}

\end{definition} 

In the case of no rim components, this is just Def 3.7, \cite{DL}.  In the case of rim components, $M=E(n_1,g_1,p_1,...,p_k\#_{F_g}E(n_2,g_2,q_1,...,q_t)$ by Theorem \ref{t:class}.  Then it is always possible to choose the diffeomorphism such that this is a good sum, see Section 3.1, \cite{GS}.  In contrast, note that the manifolds $K(p_1,q_1;p_2,q_2;p_3,q_3)$ given in \cite{GM} are not good sums.

The following Theorem can be viewed as a generalization of Theorem \ref{t:38} to the case that rim-tori are present in the sum.

\begin{theorem}\label{t:sym} Let $M$ be an elliptic surface with $\chi(M)\ne 0$ and $F_g$ an oriented generic smooth fiber.  Let $\alpha\in\mathcal P^F_M$ be given as
\[
\alpha=e_1\mathcal R_1+d_1T_1+e_2\mathcal R_2+d_2T_2+\alpha_X+(c_X+c_Y)F+g(\Gamma_X+\Gamma_Y)+\alpha_Y.
\]
Assume $\alpha$ satisfies the following: \begin{enumerate}
\item $\alpha$ is sum balanced with respect to the good sum $M=X\#_{\tilde F_g}Y$.
\item  If at least one pair $(e_i,d_i)\ne (0,0)$, then each $\alpha_*+c_*F+g\Gamma_*$ is pfc over $\tilde F_g$.
\item  $\mathcal C^{\tilde F_g}_*={\mathcal C}_*^{F}$ with $*\in\{X,Y\}$.
\end{enumerate}
Then $\alpha\in\mathcal C_M^{F_g}$.
\end{theorem}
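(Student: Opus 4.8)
The plan is to produce a symplectic representative of $\alpha$ from a Gompf fiber sum form, treating first the case $\alpha_{RT}=0$ and then, when rim tori are present, correcting the sum form by the Lagrangian-to-symplectic surgery of Theorem~\ref{t:lag} followed by inflation. If every pair $(e_i,d_i)$ vanishes, then $\alpha=\alpha_X+(c_X+c_Y)F+g(\Gamma_X+\Gamma_Y)+\alpha_Y$, and sum balancedness says precisely that $\alpha_*+c_*F+g\Gamma_*\in\mathcal C^{\tilde F_g}_*$ for $*\in\{X,Y\}$, so this class is represented by a symplectic form on $X$, resp.\ $Y$, making $\tilde F_g$ symplectic. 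Gluing these forms along $\tilde F_g$ by Gompf's construction \cite{Go2} (the sum being good) gives a symplectic form on $M$ in the class $\alpha$ for which $\tilde F_g$, hence any generic fiber and in particular $F_g$, is symplectic; thus $\alpha\in\mathcal C^{F_g}_M$. This is Theorem~\ref{t:38} with the positive cone replaced by the relative symplectic cone, and hypothesis (2) plays no role.

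So assume at least one pair is nonzero. Since $\alpha$ is balanced, $e_id_i>0$ for each such pair, and after reversing the orientations of $\mathcal R_i$ and $T_i$ if necessary I may take $e_i,d_i>0$. Put $\beta=\alpha-\alpha_{RT}$; balancedness gives $\beta^2=\alpha^2-2e_1d_1-2e_2d_2>0$, sum balancedness puts each $\alpha_*+c_*F+g\Gamma_*$ in $\mathcal C^{\tilde F_g}_*$, and these classes are pfc over $\tilde F_g$ by hypothesis (2). Following the method of \cite{H}, the Gompf sum in the good splitting $M=X\#_{\tilde F_g}Y$ can then be carried out so that the resulting symplectic form $\omega_0$ representing $\beta$ not only makes $F_g$ symplectic but also represents each rim torus class $\mathcal R_i$ by an embedded Lagrangian torus and $S_i=T_i-\mathcal R_i$ by an embedded Lagrangian sphere --- the latter obtained by matching, along the glued vanishing cycles, the Lefschetz thimbles provided by the pfc structures on the two sides --- all contained in the neck of the sum and so disjoint from $F_g$; moreover the two rim pairs may be arranged at different levels of the neck so as to occupy disjoint neighbourhoods.

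For each nonvanishing pair, $(\mathcal R_i,S_i)$ is a transversely intersecting pair of Lagrangians generating the unimodular summand $\bigl(\begin{smallmatrix}0&1\\1&-2\end{smallmatrix}\bigr)$, so Theorem~\ref{t:lag} applies; applying it successively to the pairs --- each application supported near its own pair, hence leaving the others intact and, by point (5), leaving $F_g$ symplectic --- yields $\omega_1$ in the class $\beta+\sum_i(\delta_i^{(1)}\mathcal R_i+\delta_i^{(2)}T_i)$ with $\mathcal R_i$ and $S_i$ both $\omega_1$-symplectic of small positive area, which forces $\delta_i^{(1)}>\delta_i^{(2)}>0$ with the $\delta_i^{(j)}$ arbitrarily small. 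Resolving the positive transverse intersection of the symplectic surfaces $\mathcal R_i$ and $S_i$ produces a symplectic torus in the class $T_i$, which a small symplectic pushoff of $\mathcal R_i$ meets transversely in one positive point; both surfaces have square $0$, so Lemma~\ref{l:infl} lets me add $e_i'\mathcal R_i+d_i'T_i$ for any $e_i',d_i'\ge 0$ while keeping $F_g$ symplectic. Taking $e_i'=e_i-\delta_i^{(1)}>0$ and $d_i'=d_i-\delta_i^{(2)}>0$ --- legitimate once the $\delta_i^{(j)}$ are small enough --- brings the class to $\alpha$, so $\alpha\in\mathcal C^{F_g}_M$.

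I expect the decisive difficulty to lie in the second paragraph: extracting from the Gompf sum form a configuration of Lagrangian rim tori and Lagrangian dual spheres realizing the homological data $\mathcal R_i^2=0$, $S_i^2=-2$, $\mathcal R_i\cdot S_i=1$ by honest transverse intersections, keeping the two rim pairs in disjoint neighbourhoods even though their rim tori need not be geometrically disjoint a priori, and arranging the resolution so that $\mathcal R_i$ and $T_i$ are realized by square-zero symplectic surfaces meeting in a single positive point. This is precisely what the hypotheses ``good sum'' and ``pfc at $\tilde F_g$'' are built to supply, along the lines of \cite{H}; the remaining cohomological bookkeeping needed to land exactly on $\alpha$ is then routine.
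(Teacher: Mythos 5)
Your argument follows the paper's proof in all essentials: represent $\alpha-\alpha_{RT}$ by a Gompf sum form using sum balancedness and hypothesis (3) (with \cite{Go2} disposing of the case of vanishing rim coefficients, where hypothesis (2) is indeed idle), use the pfc hypothesis and Section 8 of \cite{AMP} to produce Lagrangian vanishing spheres in the classes $\pm S_i$ meeting the Lagrangian rim tori $\pm\mathcal R_i$ transversely once, deform via Theorem \ref{t:lag}, and recover the coefficients $e_i,d_i$ by inflation (Lemma \ref{l:infl}) along a square-zero pair in the classes $\mathcal R_i$, $T_i$. The one genuine divergence is where $T_i=\mathcal R_i+S_i$ is realized: the paper first performs Polterovich's Lagrangian surgery \cite{Po} on the Lagrangians $\pm\mathcal R_i$ and $\pm S_i$ to obtain a Lagrangian torus in the class $\pm T_i$, and only then applies Theorem \ref{t:lag} to the pair $(\mathcal R_i,T_i)$, so that the same pair is used both for the deformation and for the inflation; you instead apply Theorem \ref{t:lag} directly to $(\mathcal R_i,S_i)$ (legitimate, since this pair spans the unimodular summand $\bigl(\begin{smallmatrix}0&1\\1&-2\end{smallmatrix}\bigr)$) and then manufacture a symplectic torus in the class $T_i$ by resolving the positive transverse intersection of the now-symplectic $\mathcal R_i$ and $S_i$. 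Your route trades the Lagrangian-surgery citation for a symplectic resolution plus a general-position step (the symplectic pushoff of $\mathcal R_i$ must be arranged to meet the resolved representative of $T_i$ in exactly one positive transverse point, clear of the resolution region); this is routine but is the one place your write-up leans on an unargued geometric normalization that the paper's ordering avoids. Your sign and smallness bookkeeping ($l_1>2l_2>0$ forcing $\delta_i^{(1)}>\delta_i^{(2)}>0$, and the inflation weights $e_i-\delta_i^{(1)}$, $d_i-\delta_i^{(2)}$) is consistent with Theorem \ref{t:lag} and lands exactly on $\alpha$.
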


\begin{proof}  
 Denote the balanced rim pairs by 
 \begin{equation}\label{e:bal}
\alpha_{bal}=e_{1}\mathcal R_{1}+d_{1}T_{1}+e_{2}\mathcal R_{2}+d_{2}T_{2}.
\end{equation}

If $(e_1,d_1,e_2,d_2)=(0,0,0,0)$, then as $\alpha^2=\alpha_0^2>0$ and $\alpha$ is sum balanced, \cite{Go2} shows that  $\alpha\in \mathcal C_{M}^{F_g}$.

In fact, the same argument shows that $\alpha_0-\alpha_{bal}$ is representable by a symplectic form $\omega$ obtained from the symplectic sum.  

We may assume that at least one pair $(e_i,d_i)$ is non-zero, note that in this case $e_i\cdot d_i>0$.   Let $A\in\{\mathcal R_{i},T_{i}.  \}$  If  $\alpha\cdot A$ is $\left\{\begin{array}{c}\mbox{positive}\\\mbox{negative}\end{array}\right\}$, choose a representative for $\left\{\begin{array}{c}A\\-A\end{array}\right\}$ which is Lagrangian with respect to $\omega$.  (If $\alpha\cdot A=0$, then the corresponding rim pair can be ignored as this implies $(e_i,d_i)=(0,0)$.) For $\pm\mathcal R_{i}$ this can be done by the sum construction.  For $T_{i}$, note that $X$ and $Y$ have the structure of a Lefschetz fibration on an open set containing $\tilde F_g$ and the pfc condition ensures that the symplectic form $\omega$ can be chosen to be compatible with this fibration.  Moreover, in each case the open set conatins at least 2 singular fibers with matching vanishing cycles.  Then section 8, \cite{AMP}, ensures the existence of a Lagrangian sphere, produced via Lefschetz thimbles, in the class $\pm S_i$ which intersects $\pm\mathcal R_i$ transversally in a single point.    Apply Lagrangian surgery to $\pm S_{i}$ and $\pm\mathcal R_{i}$ (\cite{Po}; 2.2.1, \cite{DHL} summarizes the construction) to produce a Lagrangian $\pm T_i$.

Using Theorem \ref{t:lag}, we can ensure that the respective representatives can be made symplectic at the cost of a deformation of $\omega$ to a form $\omega'$ in the class
\[
\alpha_0+\alpha_F+\sum_{i=1}^2(\tilde e_{i}{\mathcal R}_{i}+\tilde d_{i} T_{i})
\]
where $\tilde e_{i}$ and $\tilde d_{i}$ have the correct sign, depending on the choice of $\pm(\mathcal R_i, T_i)$ made previously.

It follows from $\mathcal R_{i}\cdot T_{i}=1$ (and also $(-\mathcal R_i)\cdot (-T_i)=1$) that Lemma \ref{l:infl} is applicable to the pair $\pm(\mathcal R_{i}, T_{i})$ and it is possible to recover the coefficients $(e_{i}, t_{i})$ for $\alpha$.  Hence, $\alpha\in \mathcal C_{M}^{F_g}$.

\end{proof}

\section{Relative Symplectic Cone of $T^4\#l\overline{\mathbb CP^2}$}\label{s:50}

The goal of this section is to determine the relative symplectic cone for $T^4\#l\overline{\mathbb CP^2}$.  In this setting, there are no rim components.  Hence the arguments are somewhat simpler than for $\chi>0$ elliptic surfaces while still following the same general outline of those proofs:  The key issue will be in dealing with exceptional curves and the arguments will make use of automorphisms to ensure that a class is equivalent to a sum balanced class.

We first describe explicit automorphisms $T^4$ which will then be used to prove Theorem \ref{t:chi0}.

\subsection{Explicit Automorphisms of $T^4$}  

The 4-torus $T^4$ has intersection form $3H$.  Write any basis of $H^2(T^4,\mathbb Z)$ that represents this form as \[(F,\Gamma, A_1, A_2, B_1, B_2)\] and represent a class
\[
\alpha=cF+g\Gamma+a_1A_1+a_2A_2+b_1B_1+b_2B_2=(c,g,a_1,a_2,b_1,b_2)\in H^2(T^4,\mathbb R).
\] 
In \cite{N}, the geometric automorphism group for $T^2\times \Sigma_g$, $g\ge 2$, is described.  While this result does not apply to $T^4$, certain diffeomorphisms are defined for $T^4$.  In particular, the maps denoted by $R_*$ in \cite{N} lead to automorphisms of $2H$-type as in Lemma \ref{l:rp}.  These maps are generated by a Dehn twist along a generator of $H_1(T^2,\mathbb Z)$ in the base torus and the identity map on the fiber torus.  This map is thus non-trivial only on $T^2\times S^1\times (-\epsilon,\epsilon)$.  On cohomology the induced maps are

\[\left(\begin{matrix}
c \\
g \\
a_1 \\
a_2 \\
b_1 \\
b_2
\end{matrix}\right)\stackrel{I}{\mapsto}
\left(\begin{matrix}
c \\
g \\
a_1 \\
a_2-Nb_1 \\
b_1 \\
b_2+Na_1
\end{matrix}\right)
\mbox{  and  }
\left(\begin{matrix}
c \\
g \\
a_1 \\
a_2 \\
b_1 \\
b_2
\end{matrix}\right)\stackrel{II}{\mapsto}
\left(\begin{matrix}
c \\
g \\
a_1-Nb_2 \\
a_2 \\
b_1+Na_2 \\
b_2
\end{matrix}\right).
\]

A further source for automorphisms is the explicit geometric description of the torus as a quotient.   Let $T^2\times T^2=\mathbb R^4/ \mathbb Z^4$ with coordinates $t_1,..,t_4$.  Choose the projection $(t_1,t_2,t_3,t_4)\mapsto (t_3,t_4)$ to define the bundle.  Let the generating classes for $H^2(T^4,\mathbb R)$ be given as 
\[
\begin{array}{cc}
F=dt_1\wedge dt_2,&\Gamma=dt_3\wedge dt_4\\
B_1=dt_1\wedge dt_3,&B_2=dt_4\wedge dt_2\\
A_1=dt_1\wedge dt_4,&A_2=dt_2\wedge dt_3\\
\end{array}
\]
where the classes are the same as previously.  Any $T\in SL(4,\mathbb Z)$ defines a diffeomorphism of $T^4$.  The following list, together with the induced action on cohomology, will be useful:
\begin{enumerate}
\item The interchange map from the $\chi>0$ case can also be obtained:
\[
T=\left(\begin{matrix}
1 & 0 & 0 & 0 \\
0 & 1 & 0 & 0 \\
0 & 0 & 0 & 1 \\
0 & 0 & -1 & 0
\end{matrix}\right)
\mbox{  inducing  }
\left(\begin{matrix}
c \\
g \\
a_1 \\
a_2 \\
b_1 \\
b_2
\end{matrix}\right)\mapsto
\left(\begin{matrix}
c\\
g \\
b_1\\
b_2 \\
-a_1\\
-a_2
\end{matrix}\right)
\]
\item  Let $A\in\mathbb Z$.  Then
\[
T=\left(\begin{matrix}
1 & 0 & A & 0 \\
0 & 1 & 0 & 0 \\
0 & 0 & 0 & 1 \\
0 & 0 & -1 & 0
\end{matrix}\right)\]
followed by the interchange map induces the map
\[
\left(\begin{matrix}
c \\
g \\
a_1 \\
a_2 \\
b_1 \\
b_2
\end{matrix}\right)\stackrel{III}{\mapsto}
\left(\begin{matrix}
c \\
g-a_1A \\
 a_1\\
a_2+cA\\
b_1\\
b_2
\end{matrix}\right)
\]
Note that this map changes the fiber class from $F$ to $F-AA_2$.  The right hand vector is written with respect to the new basis.

\item Let $A\in\mathbb Z$.  Then
\[
T=\left(\begin{matrix}
1 & 0 & 0 & 0 \\
0 & 1 & 0 & A \\
0 & 0 & 0 & 1 \\
0 & 0 & -1 & 0
\end{matrix}\right)\]
followed by the interchange map induces the map
\[
\left(\begin{matrix}
c \\
g \\
a_1 \\
a_2 \\
b_1 \\
b_2
\end{matrix}\right)\stackrel{IV}{\mapsto}
\left(\begin{matrix}
c \\
g+Aa_2\\
a_1-Ac\\
a_2\\
b_1\\
b_2
\end{matrix}\right)
\]
Note that this map changes the fiber class from $F$ to $F+AA_1$.  The right hand vector is written with respect to the new basis.

%

\end{enumerate}

\begin{lemma}\label{l:t4} Let $\alpha_0\in\mathcal P^F_{T^4}$ and assume $(a_1,b_2)$ are not a multiple of an integral class.  Let $\epsilon>0$.  Then there exists an automorphism of $H^2(T^4,\mathbb Z)$ which covers a self-diffeomorphism of $T^4$ and sends $\alpha_0$ to $\alpha\in\mathcal P^{\tilde F}_{T^4}$ with 
\[
\alpha\cdot \tilde F<\epsilon
\] while possibly changing the fiber class.
\end{lemma}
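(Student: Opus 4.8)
The plan is to first use fiber--preserving automorphisms to make the coefficient $a_1$ arbitrarily small (while keeping $a_1\neq 0$), and then apply the map $III$ above, which changes the fiber, to force $\alpha\cdot\tilde F$ into the interval $(0,\epsilon)$. Write $\alpha_0=(c,g,a_1,a_2,b_1,b_2)$ throughout; since $\alpha_0\in\mathcal P^F_{T^4}$ we have $\alpha_0^2>0$ and $g=\alpha_0\cdot F>0$, and $\alpha_0^2$ is preserved by every self--diffeomorphism used below.

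First I would run a Euclidean--type reduction on the pair $(a_1,b_2)$ using the maps $I$ and $II$. Both of these fix the pair $(c,g)$, and on the sub--coordinates $(a_1,b_2)$ they act by $(a_1,b_2)\mapsto(a_1,\,b_2+Na_1)$ and $(a_1,b_2)\mapsto(a_1-Nb_2,\,b_2)$ respectively, for arbitrary $N\in\mathbb Z$ (their effect on $a_2,b_1$ is irrelevant for what follows). By hypothesis $(a_1,b_2)$ is not a real multiple of an integral vector, so $a_1,b_2\neq 0$ and $a_1/b_2\notin\mathbb Q$; this irrationality is preserved by both moves, so neither coordinate ever vanishes along the reduction. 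Alternately replacing the larger of $|a_1|,|b_2|$ by its remainder modulo the smaller is precisely the continued fraction algorithm applied to $a_1/b_2$, so the smaller of $|a_1|,|b_2|$ tends to $0$; stopping at a stage where this minimum is below $\epsilon$, and performing one further step $a_1\mapsto a_1-Nb_2$ if needed so that $a_1$ is the small coordinate, we obtain a class equivalent to $\alpha_0$, still in $\mathcal P^F_{T^4}$ (as $g$ is unchanged), with $0<|a_1|<\epsilon$. This is the $T^4$ analogue of the reduction in Lemma \ref{l:bal}.

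Now I would apply the map $III$ with parameter $A\in\mathbb Z$. This sends $\alpha$ to an equivalent class whose coordinate vector in the new basis $(\tilde F=F-AA_2,\,\tilde\Gamma,\,\tilde A_1,\,\tilde A_2,\,\tilde B_1,\,\tilde B_2)$ is $(c,\,g-Aa_1,\,a_1,\,a_2+Ac,\,b_1,\,b_2)$; since the new basis again realises the intersection form $3H$, the coefficient of $\tilde\Gamma$ is exactly $\alpha\cdot\tilde F=g-Aa_1$. Because $0<|a_1|<\epsilon$, the arithmetic progression $\{\,g-Aa_1:A\in\mathbb Z\,\}$ has consecutive gap $|a_1|<\epsilon$ and hence meets $(0,\epsilon)$; choosing $A$ with $g-Aa_1\in(0,\epsilon)$ gives $0<\alpha\cdot\tilde F<\epsilon$. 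Since $\alpha^2>0$ is maintained throughout, $\alpha\in\mathcal P^{\tilde F}_{T^4}$ with $\alpha\cdot\tilde F<\epsilon$, which is the claim; the fiber class has changed from $F$ to $\tilde F$, as permitted.

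The only genuinely substantive point is the non--termination of the Euclidean reduction: it is exactly the hypothesis that $(a_1,b_2)$ is not proportional to an integral vector that keeps the algorithm from stalling at a pair with a zero entry, and this is what lets $|a_1|$ be pushed below the prescribed threshold $\epsilon$. Everything else is bookkeeping with the explicit matrices defining $I$, $II$ and $III$.
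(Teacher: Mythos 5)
Your proof is correct and follows essentially the same route as the paper's: a Euclidean reduction on the pair $(a_1,b_2)$ via the fiber-preserving maps $I$ and $II$ (kept from terminating by the hypothesis that $(a_1,b_2)$ is not a multiple of an integral class), followed by an application of map $III$ to bring $\alpha\cdot\tilde F=g-Aa_1$ into $(0,\epsilon)$. The only cosmetic difference is that the paper records the reduction as an explicit geometric decay $|a_1^{k}|\le |a_1|/4^{k}$ rather than invoking the continued fraction algorithm.
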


\begin{proof}
Assume first that $(a_1,b_2)$ are linearly independent over $\mathbb Z$.  For a given class $\alpha_0$ represented by $(c,g,a_1,a_2,b_1,b_2)$, use map I to produce a class with coefficients  $(c,g,a^1_1,a^1_2,b^1_1,b^1_2)$ with 
\[
0<|b^1_2|\le\frac{|a_1|}{2}.
\]
Now apply map II to the class $(c,g,a^1_1,a^1_2,b^1_1,b^1_2)$ to produce a new class $(c,g,a^2_1,a^2_2,b^2_1,b^2_2)$ with
\[
0<|a^2_1|\le \frac{|b^1_2|}{2}\le \frac{|a_1|}{4}.
\]
Iterate this procedure until the terms $a^k_1$ and $b^k_2$ satisfy
\[
0<|a^k_1|, |b_2^k|\le \epsilon.
\]
Up to this point, the terms $(c,g)$ have not been changed, the class has the form
\[
(c,g,\underbrace{\tilde a_1}_{<\epsilon},\tilde a_2,\tilde b_1,\underbrace{\tilde b_2}_{<\epsilon})
\]  Now apply map III to reduce the $g$ coefficient to satisfy $0<g\le \epsilon$.  At this point the fiber class has been changed and a class, using the same ordering, of the form
\[
(c,\underbrace{ g^1}_{<\epsilon},\underbrace{\tilde a^1_1}_{<\epsilon},\tilde a^1_2,\tilde b^1_1,\underbrace{\tilde b^1_2}_{<\epsilon})
\]
obtained.  

%
%

\end{proof}

\subsection{The Relative Symplectic Cone}

The relative symplectic cone for $T^4$ was determined by Geiges:

\begin{theorem}(Theorem 2, \cite{Ge})\label{t:old} Let $T$ be an orientable $T^2$-bundle over $T^2$ with generic smooth fiber $F_g$.  Assume $\mathcal C_T^{F_g}\ne \emptyset$.  Then 
\[
\mathcal C^{F_g}_T=\mathcal P^F_T.
\]
Moreover, each class $\alpha\in \mathcal C^{F_g}_T$ can be represented by a symplectic form compatible with the fibration.
\end{theorem}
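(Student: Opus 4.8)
\emph{Proof proposal.} The inclusion $\mathcal C^{F_g}_T\subseteq\mathcal P^F_T$ is immediate from \eqref{e:incl}, so the content is the reverse inclusion together with the fibration-compatibility statement. The plan is to produce all the required symplectic forms as \emph{left-invariant} forms on $T$ viewed as a solvmanifold. By the classification of orientable $T^2$-bundles over $T^2$ (\cite{SF}, \cite{Ue3}; cf. Theorem \ref{t:comp}), every such $T$ is a quotient $G/\Gamma$ of a $4$-dimensional solvable Lie group $G$ by a lattice $\Gamma$, with the bundle projection induced by a surjective Lie group homomorphism $p\colon G\to\mathbb R^2$ whose kernel $N\cong\mathbb R^2$ is normal; the fiber $F_g$ is the subtorus $N\Gamma/\Gamma$, and every fiber of the bundle is a left-translate of it.

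First I would reduce to left-invariant data. On $T=G/\Gamma$ the de Rham cohomology is computed by the Chevalley--Eilenberg complex $\Lambda^*\mathfrak g^*$ of left-invariant forms: in the nilpotent cases ($T^4$, the Kodaira--Thurston manifolds $M_\lambda$) this is Nomizu's theorem, and for the remaining solvable bundles on the list one verifies the Hattori condition from the explicit structure equations, or simply computes $H^*_{dR}(T)$ directly from the monodromy, since $T$ is a mapping torus of $T^3$. Consequently every class $\alpha\in H^2(T;\mathbb R)$ has a left-invariant closed representative $\sigma_\alpha$. Because $\sigma_\alpha$ is left-invariant, $\sigma_\alpha\wedge\sigma_\alpha$ is a left-invariant $4$-form, hence a constant multiple of the volume form, and $[\sigma_\alpha]^2=\int_T\sigma_\alpha\wedge\sigma_\alpha$ equals that constant times $\operatorname{vol}(T)>0$. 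Thus $\alpha^2>0$ forces $\sigma_\alpha\wedge\sigma_\alpha>0$ pointwise, i.e. $\sigma_\alpha$ is a symplectic form; in particular $\mathcal P_T\subseteq\mathcal C_T$.

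Next I would pin down the fiber. The subgroup $N$ is abelian, so $\sigma_\alpha|_{F_g}$ is a left-invariant $2$-form on the $2$-torus $F_g=N\Gamma/\Gamma$, hence a constant multiple of its area form, and this constant equals $\int_{F_g}\sigma_\alpha=\alpha\cdot F$. Therefore if $\alpha\cdot F>0$ then $\sigma_\alpha|_{F_g}$ is an area form inducing the chosen orientation of $F_g$, so $\sigma_\alpha$ is a relative symplectic form and $\alpha\in\mathcal C_T^{F_g}$; this proves $\mathcal P_T^F\subseteq\mathcal C_T^{F_g}$, hence equality. Finally, since $N$ is normal, each fiber $gN\Gamma/\Gamma$ is the image of $F_g$ under the left-translation by $g$, which preserves $\sigma_\alpha$; thus every fiber is $\sigma_\alpha$-symplectic and $\sigma_\alpha$ is compatible with the fibration, giving the last sentence. (One could alternatively start from a single relative symplectic form supplied by the hypothesis and enlarge its class by inflation along $F_g$ and along a symplectic section in class $\Gamma$, together with scaling, but the left-invariant construction is cleaner and yields fibration-compatibility for free.)

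I expect the main obstacle to be exactly the point flagged above: verifying, for each bundle on the (short) list of orientable $T^2$-bundles over $T^2$ with $\mathcal C_T^{F_g}\ne\emptyset$, that left-invariant forms compute $H^2$, since Nomizu's theorem does not apply verbatim in the non-nilpotent cases and one must either confirm complete solvability to invoke Hattori's theorem or carry out the de Rham computation by hand from the monodromy on $T^3$. Once that is in place, everything reduces to elementary linear algebra on the $\le 6$-dimensional space $H^2(T;\mathbb R)$.
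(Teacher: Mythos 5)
You should first be aware that the paper does not prove Theorem \ref{t:old} at all: it is imported verbatim as Theorem 2 of \cite{Ge}, so there is no internal proof to compare against. Your strategy --- realize $T$ as a lattice quotient of a solvable Lie group via the classification of \cite{SF}, \cite{Ue3}, represent classes by invariant forms, and read off nondegeneracy from $\sigma_\alpha\wedge\sigma_\alpha$ being a constant multiple of the invariant volume form --- is in substance the method of the cited source. The parts you carry out are correct: the pointwise positivity from $\alpha^2>0$, the restriction to the fiber being constant with integral $\alpha\cdot F$, and fibration compatibility via left translations all work, and for the bundles this paper actually feeds into Theorem \ref{t:old} ($T^4$ and the Kodaira--Thurston manifolds $M_\lambda$, which are nilmanifolds, plus the completely solvable $b^+=1$ models) Nomizu's and Hattori's theorems close the one step you leave open.

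The gap you flag is nevertheless genuine for the theorem as stated, and neither of your fallbacks closes it. Some bundles in the scope of the theorem are not nilmanifolds and not completely solvable solvmanifolds: e.g.\ the flat bundle with monodromy $-I$ in one base direction and trivial Euler class has $[F]\ne 0$ (so $\mathcal C_T^{F_g}\ne\emptyset$), but a holonomy eigenvalue $-1$ cannot arise as $e^{t\lambda}$ with $\lambda$ real, so its model group is of $\widetilde{E}(2)$-type. For such groups invariant forms can genuinely fail to compute de Rham cohomology --- the standard example being $T^3$ presented as a lattice quotient of $\widetilde{E}(2)$, where $b_1=3$ but the Chevalley--Eilenberg $H^1$ is one-dimensional --- so the existence of an invariant representative $\sigma_\alpha$ for every $\alpha\in\mathcal P_T^F$ is exactly what is in doubt. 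Your first fallback (computing $H^*_{dR}(T)$ from the monodromy) tells you the dimension of $H^2$ but does not manufacture invariant representatives, which is what the pointwise argument requires. Your second fallback (inflation from the one form supplied by the hypothesis) only reaches classes of the form $\lambda[\omega_0]+tF+s\Gamma$ with $\lambda>0$, $t,s\ge 0$, and only if a symplectic surface in class $\Gamma$ exists; this is a proper subcone of $\mathcal P_T^F$, since inflation cannot decrease the ratio of the $\Gamma$-coefficient to the $F$-coefficient. What is actually needed, and what \cite{Ge} does, is a case-by-case check through the list of \cite{SF} that for every model with $\mathcal C_T^{F_g}\ne\emptyset$ the invariant classes do exhaust $H^2$ (in the $-I$ example above this happens to be true, but it is a computation, not a citation). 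With that finite verification added, your argument is complete.
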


In fact, if $T$ is minimal, then $\mathcal C_T=\mathcal P_T$, see \cite{Ge}.

Recall that for $T^4$ we have fixed, for convenience, a specific fibration.  The previous Theorem does not and the result is valid for any fibration structure placed on the total space $T$ with fiber $F_g$.  It is then rather straightforward to construct explicit examples of symplectic forms compatible with the fibration, hence $\mathcal C_{T^4}^{F_g}\ne \emptyset$
.

The main result of this section can now be stated:

\begin{theorem}\label{t:chi0}
Let \[
M=T^4\#l\overline{\mathbb CP^2}
\]
 with $l\ge 1$, and $F_g$ a generic oriented fiber.  Let $K\in\mathcal K_F$ and denote the $l$ exceptional curves in $\mathcal E_K$ by $E_i$.  Then
 \[
 \mathcal C_{M,K}^{F}\subset \mathcal C_M^{F_g}.
 \] 
 In particular, 
 \[
 \mathcal C_{M}^{F_g}=\{\alpha\in\mathcal P^F_M\;|\; \alpha\cdot E_i\ne 0\;\forall i=1,..,l\}.
 \] 
\end{theorem}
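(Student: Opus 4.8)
The plan is to settle the statement first for $l=0$, then for the base case $l\ge 1$ with $M_b\in\{T^4,M_\lambda\}$ (a $4$-torus or a Kodaira--Thurston manifold), and finally to reduce the remaining $M$ to these by rerouting the blow-ups into a single fibre summand. We use repeatedly that $\mathcal C_M^{F_g}\ne\emptyset$ gives $\mathcal C_{M_b}\ne\emptyset$ (blow down), so by Theorem~\ref{t:comp} the torus bundles occurring below are symplectic with the fibre symplectic, which is the input the quoted results need.

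\emph{The case $l=0$.} Now $M=M_b$ is a torus bundle over $\Sigma_g$, and $b^+(M)>1$ forces $g\ge 1$; moreover $b^+(M_b)=1+\sum_i(b^+(T_i)-1)$ for any decomposition $M_b=T_1\#_{F_g}\cdots\#_{F_g}T_g$ into $T^2$-bundles over $T^2$ (Theorem~4.8 of \cite{KN}), so some $T_i$ has $b^+>1$, i.e.\ equals $T^4$ or $M_\lambda$. For $g=1$ the claim is Theorem~\ref{t:old}. For $g\ge 2$ I would induct on the number of summands: the fibre sum creates no rim components (no summand carries an $E(m)$-piece), so a class $\alpha\in\mathcal P_{M_b}^F$ splits as in the proof of Theorem~\ref{t:38} into classes in the relative positive cones of $T_1\#_{F_g}\cdots\#_{F_g}T_{g-1}$ and of $T_g$, each representable by a fibration-compatible form (induction, resp.\ Theorem~\ref{t:old}), which glue by Gompf's construction along matching fibres to a fibration-compatible form on $M_b$ in the class $\alpha$. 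Hence $\mathcal C_{M_b}^{F_g}=\mathcal P_{M_b}^F$ with fibration-compatible representatives, which is the asserted $l=0$ statement; note this argument did not use $b^+>1$, so it applies to any torus bundle over a surface of genus $\ge 1$ with nonempty relative cone.

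\emph{Reduction for $l\ge 1$.} By Lemma~3.5 of \cite{TJLL}, the identity $\mathcal K=\{\pm K_{min}\pm E_1\pm\cdots\pm E_l\}$ for blow-ups of minimal manifolds, and $\mathcal C_{M,-K_{min}\pm E_1\pm\cdots\pm E_l}^F=\emptyset$ (light cone lemma, since $\kappa(M)\ge 0$), one has $\mathcal C_M^{F_g}\subset\bigsqcup_{K\in\mathcal K_F}\mathcal C_{M,K}^F=\{\alpha\in\mathcal P_M^F\mid\alpha\cdot E_i\ne 0\ \forall i\}$; together with the Taubes obstruction $\alpha\cdot E\ne 0$ for $\alpha\in\mathcal C_M$, it remains to prove $\mathcal C_{M,K}^F\subset\mathcal C_M^{F_g}$ for each $K\in\mathcal K_F$. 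Fix $K$, relabel the $E_i$ so $\mathcal E_K=\{E_1,\dots,E_l\}$, and put the blow-up locus in a neighbourhood of $F_g$, so the $E_i$ avoid $F_g$. If $g\ge 2$, reroute all blow-ups into a summand relabeled $T_g$ that is $T^4$ or $M_\lambda$ (possible since not all $T_i$ have $b^+=1$), writing $M=(T_1\#_{F_g}\cdots\#_{F_g}T_{g-1})\#_{F_g}(T_g\#l\overline{\mathbb{CP}^2})$; this sum has no rim components and its exceptional curves are disjoint from the gluing fibre, so Theorem~\ref{t:exc} reduces the claim to $\mathcal C_{T_1\#_{F_g}\cdots\#_{F_g}T_{g-1}}^{F_g}=\mathcal P^F$ (the $l=0$ case) and $\mathcal C_{T_g\#l\overline{\mathbb{CP}^2},K_{T_g}}^F\subset\mathcal C_{T_g\#l\overline{\mathbb{CP}^2}}^{F_g}$, which is the base case. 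So everything comes down to $M=M_b\#l\overline{\mathbb{CP}^2}$ with $M_b\in\{T^4,M_\lambda\}$.

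\emph{The base case, and the main obstacle.} Write $\alpha=\alpha_0-\sum e_iE_i\in\mathcal C_{M,K}^F$ with $\alpha_0\in H^2(M_b,\mathbb R)$ and use $M=M_b\#_{F_g}\big[(S^2\times T^2)\#l\overline{\mathbb{CP}^2}\big]$; this is the setting of Theorem~\ref{t:s2t2} with $M_m=M_b$ minimal and $\mathcal C_{M_b}^{F_g}=\mathcal P_{M_b}^F$ (Theorem~\ref{t:old}). If the distinguished pair of coefficients of $\alpha_0$ (namely $(a_1,b_2)$ for $T^4$, $(a_1,g)$ for $M_\lambda$) is not a multiple of an integral class, Lemma~\ref{l:t4} or Lemma~\ref{l:kt} supplies, for all small $\epsilon>0$, a self-diffeomorphism $\psi_\epsilon$ of $M_b$ with $(\psi_\epsilon)_*\alpha_0\cdot\tilde F<\epsilon$, so Theorem~\ref{t:s2t2} puts $\alpha$ in $\mathcal C_M^{F_g}$; moreover the representative built there carries the small-volume elliptic fibration of $M$, hence makes both $F_g$ and a square-zero section $\Gamma_M$ symplectic (such a $\Gamma_M$ with $\Gamma_M\cdot F=1$ exists because $M_b$ has intersection form a sum of hyperbolics). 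For an arbitrary $\alpha=\alpha_0-\sum e_iE_i\in\mathcal C_{M,K}^F$ I would choose small $s_1,s_2>0$, generic enough that $\alpha_0-s_1F-s_2\Gamma_M$ has distinguished pair not a multiple of an integral class---the bad $(s_1,s_2)$ form countably many lines---so that $\alpha':=\alpha-s_1F-s_2\Gamma_M$ still lies in the open set $\mathcal C_{M,K}^F$ and is of the first kind; then $\alpha'\in\mathcal C_M^{F_g}$ with a representative making $F_g$ and $\Gamma_M$ symplectic, and inflation along $F_g$ and $\Gamma_M$ with weights $s_1,s_2$ (Lemma~\ref{l:infl}) yields a symplectic form in the class $\alpha'+s_1F+s_2\Gamma_M=\alpha$ for which $F_g$ stays symplectic. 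I expect this last passage to be the main obstacle: Lemmas~\ref{l:t4} and \ref{l:kt}, hence Theorem~\ref{t:s2t2} applied directly, only reach classes whose distinguished coefficients are irrationally related (a real multiple of an integral class cannot be made to pair arbitrarily small with any fibre by a diffeomorphism), so the integral-multiple classes must be recovered by the inflation argument; making that precise requires checking that the forms produced for the generic classes are genuinely compatible with the elliptic fibration of $M$ and securing a square-zero section class on $M_b$.
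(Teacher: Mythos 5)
Your overall architecture coincides with the paper's: the $l=0$ case via Theorem \ref{t:38} and Theorem \ref{t:old}; the base case $M_b\in\{T^4,M_\lambda\}$ via Lemma \ref{l:t4} resp.\ Lemma \ref{l:kt} followed by Theorem \ref{t:s2t2}; the genus $\ge 2$ case via Theorem \ref{t:exc} (the paper follows Lemma \ref{l:0class} and also invokes Theorem \ref{t:b10} when a $b^+=1$ summand is available, but your rerouting of the blow-ups into the $b^+>1$ summand is an acceptable variant). The problem is your treatment of the degenerate classes, which you yourself flag as the main obstacle.

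For $M_b=T^4$ the perturbation $\alpha_0\mapsto\alpha_0-s_1F-s_2\Gamma_M$ does not work: the distinguished pair $(a_1,b_2)$ lives in the $2H$-summand orthogonal to $\langle F,\Gamma\rangle$, so it is \emph{unchanged} by subtracting any combination of $F$ and $\Gamma_M$. The set of ``bad'' $(s_1,s_2)$ is therefore not a countable union of lines --- it is either empty or all of $\mathbb R^2$, and in the degenerate case it is all of $\mathbb R^2$, so genericity buys nothing. The paper's fix is essential and different in kind: after subtracting $\delta F$ one applies the \emph{fiber-changing} automorphism (map IV with $A=-1$), which sends $a_1\mapsto a_1+c-\delta$ and thereby feeds the perturbed $F$-coefficient into the distinguished pair; for generic $\delta$ the new pair is no longer a multiple of an integral class, and then only a single inflation along $F$ --- which Theorem \ref{t:s2t2} already guarantees is symplectic --- is needed to recover $\alpha_0$. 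This also disposes of your second worry: you would need a symplectic torus in the class $\Gamma_M$ to inflate with weight $s_2$, and Theorem \ref{t:s2t2} gives you no such representative. (In the $M_\lambda$ case, where the distinguished pair $(g,a_1)$ does involve the $\Gamma$-coefficient and your perturbation would at least move it when $a_1\ne 0$, the paper still does not inflate along $\Gamma$ in the blown-up manifold directly; it blows down, exhibits an explicit invariant symplectic form in which $\Gamma$ is visibly symplectic, blows up away from $\Gamma$, and only then inflates.) As written, your argument does not establish the theorem for classes on $T^4\#l\overline{\mathbb CP^2}$ whose $(a_1,b_2)$-pair is a real multiple of an integral class.
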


\begin{proof} Fix a symplectic canonical class $K\in\mathcal K_F$.  Let $\alpha_0\in\mathcal C_{M,K}^{F}$ such that $\alpha_0\cdot E_i>0$ for all $i\in\{1,..,l\}$.  Write the class as
\[
\alpha_0=\alpha_{min}-\sum_{i=1}^le_iE_i
\]

with $e_i>0$.  Note that $\alpha^2_{min}>0$ and $\alpha_{min}\cdot F>0$.  Hence $\alpha_{min}\in\mathcal C_{T^2\times T^2}^{F_g}$.

Consider first the case that in $\alpha_{min}$ the pair $(a_1,b_2)$ is not a multiple of an integral class.  Given $\epsilon>0$, this class can be mapped to an equivalent class
\[
\alpha=\alpha_{1}+\alpha_2+c\tilde F+g\Gamma
\]
by a diffeomorphism of $T^2\times T^2$ (Lemma \ref{l:t4}).  This class satisfies $0< g <\epsilon$.

Note that the fiber class may have changed under this diffeomorphism and the symplectic canonical class changed to $\tilde K$.  Theorem \ref{t:s2t2} now implies that $\alpha_0\in\mathcal C_M^{F_g}$.

Assume now that in in $\alpha_{min}$ the pair $(a_1,b_2)$ is a multiple of an integral class.  Choose a $\delta>0$ such that for the class 
\[
\alpha_\delta=\alpha_{min}-\delta F
\]
\begin{enumerate}
\item the class obtained from $\alpha_{\delta}$ by the map IV with $A=-1$ has coefficients $(\tilde a_1,\tilde b_2)$ which are not a multiple of an integral class and
\item $\alpha_\delta^2>0$.

\end{enumerate}  
The previous argument then shows that $\alpha_\delta\in\mathcal C^{F_g}_M$ and $F_g$ is $\alpha_\delta$-symplectic.  Now inflate along $F$ to regain the original class $\alpha_0$, hence $\alpha_0\in\mathcal C_M^{F_g}$.

\end{proof}

This result can be used to extend Theorem 4.1, \cite{DL}, using Theorem \ref{t:exc}.

\begin{cor}\label{c:chi0}
Let \[
M=(T^2\times\Sigma_g)\#l\overline{\mathbb CP^2}
\]
 with $l\ge 0$, and $F_g$ a generic oriented fiber.  Let $K\in\mathcal K_F$ and denote the $l$ exceptional curves in $\mathcal E_K$ by $E_i$.  Then
 \[
 \mathcal C_{M,K}^{F}\subset \mathcal C_M^{F_g}.
 \] 
 In particular, 
 \[
 \mathcal C_{M}^{F_g}=\{\alpha\in\mathcal P^F_M\;|\; \alpha\cdot E_i\ne 0\;\forall i=1,..,l\}.
 \] 
\end{cor}

\begin{proof}
The case $l=0$ is exactly Theorem 4.1, \cite{DL}.  For $g=1$, this is Theorem \ref{t:chi0}.  Hence assume that $g\ge 2$ and $l\ge 1$.  Then write
\[
(T^2\times\Sigma_g)\#l\overline{\mathbb CP^2}=(T^2\times\Sigma_{g-1})\#\left(T^4\#l\overline{\mathbb CP^2}\right)
\]
and apply Theorem \ref{t:exc}.
\end{proof}

\subsection{The Symplectic Cone}\label{s:t4non}

The relative result for $T^4\#l\overline{\mathbb CP^2}$ allows us to determine the full symplectic cone. The result has been obtained in \cite{EV} and \cite{LMS} previously, we give a brief alternative proof of this case.

\begin{lemma} 
\[
\mathcal C_{T^4\#l\overline{CP^2}}=\{\alpha\in\mathcal P_{T^4\#l\overline{CP^2}}\;|\;\alpha\cdot E_i\ne 0\;\forall i\in\{1,..,l\}\}
\]
\end{lemma}

\begin{proof} Clearly the inclusion $\subset$ holds.

Let $\alpha\in \{\alpha\in\mathcal P_{T^4\#l\overline{CP^2}}\;|\;\alpha\cdot E_i\ne 0\;\forall i\in\{1,..,l\}\}$ be the class $(c,g,a_1,a_2,b_1,b_2,e_1,...,e_l)$.  Then $(c,a_1,b_1)$ is not the zero vector, hence $\alpha$ lies in one of the relative symplectic cones $\mathcal C_{T^4\#l\overline{CP^2}}^{\pm F_g}$, $\mathcal C_{T^4\#l\overline{CP^2}}^{\pm A_1}$ or $\mathcal C_{T^4\#l\overline{CP^2}}^{\pm B_1}$. 

\end{proof}

\section{Relative Symplectic Cones for Elliptic surfaces with $\chi>0$}  \label{s:cp}

In order to apply Theorem \ref{t:sym} to determine the relative symplectic cones of elliptic surfaces with positive Euler number, we need to show that every balanced class is equivalent to a sum balanced class and then show that every symplectic class is pfc with regard to some smooth fiber.  Lemma \ref{l:ba} ensures that every class in $\mathcal P^F_M$ is equivalent to a balanced class.  Complications arise in sums which involve $E(1)$, due to the presence of exceptional curves which are not constrained to a fiber and the more complicated structure of the symplectic cone arising in the $b^+=1$ case.

For this reason, special attention is given to sums $M=E(1)\#_{F_g} N$.  These also form the basis for an inductive argument:  After recalling results on the relative symplectic cone of $E(1)$, use this and Theorem \ref{t:sym} to determine the relative symplectic cones of $E(2)$ and $E(2)\#l\overline{\mathbb CP^2}$.  Using a result in \cite{FM1}, it follows that every class in the relative symplectic cone of $E(1)$ is pfc relative to any smooth fiber, this extends to the relative symplectic cones of $E(2)$ and $E(2)\#l\overline{\mathbb CP^2}$ in a suitable way.  Iterate this procedure to obtain the relative symplectic cones of $E(n)\#l\overline{\mathbb CP^2}$.

Throughout we assume  that the gluing diffeomorphism has been chosen such that the sum is good.

\subsection{$E(1)$: Basic Results}

We briefly review the structure of the symplectic cone for $E(1)$.  For a symplectic canonical class $K$ on the underlying smooth manifold of $E(1)$ there always exists a basis of the second (co)homology which consists of the proper transform of the generator of the second (co)homology of $CP^2$ and $9$ pairwise orthogonal exceptional classes.  Call such a basis a $K$-standard basis and write it as $(H^K, E_1^K,...,E_9^K)$.  When there is no confusion as to what the symplectic canonical class is, we often drop the superscript.  Recall that \[
H'=\left(\begin{matrix}
0&1\\1&-1
\end{matrix}\right).
\] 
The following results will be useful:

\begin{lemma}\label{l:e1setup} Let $M=E(1)$. \begin{enumerate}  
\item (Lemma 3.5, \cite{TJLL})	Given two symplectic canonical forms $K_1$ and $K_2$, there exists a diffeomorphism of $M$ which maps the $K_1$-standard basis to the $K_2$-standard basis.
\item (Prop 4.9, \cite{TJLL})	A class $\alpha_K$ is represented by a $K$-symplectic form if and only if it is equivalent to a reduced class with respect to the $K$-standard basis such that no coefficient vanishes. This means $\alpha_K$ can be written as $aH^K-\sum b_iE_i^K$ with 
\[
a\ge b_1+b_2+b_3
\]
and
\[
b_1\ge b_2\ge...\ge b_9>0.
\]
\item Given $K$ and an exceptional class $E$, it is possible to find a $K$-standard basis $(H^K, E_1^K,...,E_8^K, E)$ such that the intersection form splits into $E_8$ and $H'$, where $H'$ is generated by $-K$ and $E$.  Call an $E_8$ generated in such a splitting for such a basis a $K$-standard generated $E_8$.
\item (Prop1.2.12, \cite{MSch}; Prop 2.7, \cite{TJLW}) 	Given a splitting of the intersection form of $M$ into $E_8\oplus H'$ where $H'$ is generated by $-K$ and $E$, there is a diffeomorphism of $M$ that takes this splitting to a splitting $E_8\oplus H'$ with a $K$-standard generated $E_8$ and leaving $H'$ unchanged.  Moreover, this diffeomoprhism is generated by reflections on $-2$-spheres disjoint from $-K$ and $E$.

\end{enumerate}
\end{lemma}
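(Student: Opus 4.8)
Parts (1), (2) and (4) are quoted directly from the indicated references: (1) is Lemma~3.5 and (2) is Proposition~4.9 of \cite{TJLL}, while (4) combines Proposition~1.2.12 of \cite{MSch} with Proposition~2.7 of \cite{TJLW}. So the only part requiring an argument is (3), and for it the plan is to prove the purely lattice-theoretic statement that $\langle -K,E\rangle$ has orthogonal complement isomorphic to $E_8$, together with the existence of a $K$-standard basis whose last vector is $E$; realizability of the resulting splitting by diffeomorphisms is then exactly what (4) supplies.

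Write $L=H_2(E(1);\mathbb Z)\cong\langle1\rangle\oplus9\langle-1\rangle$, let $K$ be a symplectic canonical class, and let $E$ be a $K$-exceptional class, so that $E^2=-1$ and $K\cdot E=-1$. First I would record that the sublattice $N=\langle -K,E\rangle$ has Gram matrix $H'$: indeed $(-K)^2=K^2=0$, $(-K)\cdot E=1$ and $E^2=-1$. Since $\det H'=-1$, $N$ is unimodular, hence splits off as an orthogonal direct summand $L=N\oplus N^{\perp}$. The complement $N^{\perp}$ has rank $8$, is unimodular, and is negative definite by the signature count $(1,9)=(1,1)+(0,8)$. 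The key point is that $N^{\perp}$ is \emph{even}: $K$ is a characteristic vector of $L$ (since $-K=c_1(E(1),\omega)$ reduces to $w_2$ mod $2$; equivalently $K\cdot v\equiv v^2\pmod 2$, as one checks directly from $K=-3H+\sum_{i=1}^9E_i$ in a standard basis), so every $v\in N^{\perp}$ satisfies $v^2\equiv v\cdot K=0\pmod 2$. As the unique even negative definite unimodular lattice of rank $8$ is $E_8$, we conclude $N^{\perp}\cong E_8$, the ``$E_8$'' in the statement.

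To produce a $K$-standard basis ending in $E$, I would compare with the tautological model $\langle1\rangle\oplus9\langle-1\rangle$ carrying an orthogonal basis $h,e_1,\dots,e_9$ with $h^2=1$, $e_i^2=-1$, and standard canonical class $k=-3h+e_1+\cdots+e_9$; there $\langle -k,e_9\rangle$ is a unimodular copy of $H'$ whose complement is the classical $E_8$ of the decomposition $\langle1\rangle\oplus9\langle-1\rangle=E_8\oplus H'$. Now build a lattice isometry $\Phi\colon L\to\langle1\rangle\oplus9\langle-1\rangle$ by sending $-K\mapsto -k$ and $E\mapsto e_9$ on $N$ (this respects all intersection numbers) and on $N^{\perp}$ by any isometry $N^{\perp}\to\langle -k,e_9\rangle^{\perp}$ between these two copies of $E_8$; since both lattices are the orthogonal sum of the corresponding two pieces, $\Phi$ is an isometry and $\Phi(K)=k$. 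Then $H:=\Phi^{-1}(h)$ and $E_i:=\Phi^{-1}(e_i)$ for $i=1,\dots,8$, together with $E=\Phi^{-1}(e_9)$, form a basis of $L$ with the required intersection pattern, and $K=\Phi^{-1}(k)=-3H+E_1+\cdots+E_8+E$; so $(H,E_1,\dots,E_8,E)$ is a $K$-standard basis, and the orthogonal complement of $\{-K,E\}$ relative to it is by construction a $K$-standard generated $E_8$. This proves (3).

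The one step that genuinely requires care is the evenness of $N^{\perp}$: one must invoke that $K$ is characteristic in order to force $N^{\perp}\cong E_8$ rather than the odd lattice $8\langle-1\rangle$, since $\langle -K,E\rangle\oplus 8\langle-1\rangle$ is already abstractly isometric to $L$, so the splitting type is not pinned down by unimodularity alone. The remaining manipulations are routine lattice bookkeeping.
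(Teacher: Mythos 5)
The paper gives no proof of this lemma: parts (1), (2) and (4) are quoted from the cited references and part (3) is simply asserted, so there is no argument of the paper's to compare against. Your lattice-theoretic derivation of (3) is correct and is the natural way to justify it. You rightly check that $N=\langle -K,E\rangle$ has Gram matrix $H'$ (using $K^2=0$ on $E(1)$ and $K\cdot E=-1$), that unimodularity of $N$ splits it off orthogonally, and --- the genuinely essential point, which you correctly isolate --- that the characteristic property of $K$ forces $N^{\perp}$ to be even and hence isomorphic to $E_8$ rather than to $8\langle-1\rangle$; the construction of the isometry $\Phi$ with $\Phi(K)=k$ and the resulting basis $(H,E_1,\dots,E_8,E)$ is routine from there. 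The one loose end is your closing appeal to part (4) for ``realizability'': part (4) presupposes the existence of a $K$-standard generated $E_8$, so it cannot be what promotes your abstract lattice basis to a $K$-standard basis in the paper's geometric sense (nine pairwise orthogonal \emph{exceptional} classes together with the transform of the hyperplane class). What actually closes this gap for $E(1)=\mathbb{CP}^2\#9\overline{\mathbb{CP}^2}$ is Wall's theorem that every isometry of the intersection lattice is induced by a diffeomorphism (or, equivalently, part (1) together with transitivity of the $K$-preserving diffeomorphisms on $\mathcal E_K$): composing $\Phi^{-1}$ with the identification of the tautological model with an honest standard basis gives a diffeomorphism carrying that basis to yours, so your $E_i^K$ are genuine exceptional classes. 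With that substitution your argument is complete.
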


Note that the last diffeomorphism of $E(1)$ extends to a diffeomorphism of  $E(n)$ affecting only an $E_8$-component and otherwise acting by identity.

These results together with Theorem \ref{t:b10} imply the following result.

\begin{theorem}\label{t:be1} Let $M=E(1)\#l\overline{\mathbb CP^2}$, $l\ge 0$, and $F_g$ a generic oriented smooth fiber of $E(1)$ disjoint from the blow-up locus.  Then
\[
\bigsqcup_{K\in\mathcal K(F_g)}\left\{\left.\alpha\in \mathcal P_{M}^{F}\;\right|\; \alpha\cdot E>0\; \forall E\in \mathcal E_{K}\right\}= \mathcal C^{F_g}_{M}.
\]
In particular, $\mathcal C_{E(1)}^{F_g}=\left\{\left.\alpha\in \mathcal P_{M}^{F}\;\right|\; \alpha\cdot E>0\; \forall E\in \mathcal E_{-F}\right\}=\mathcal C_{E(1)}^F$. 
\end{theorem}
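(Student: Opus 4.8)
The plan is to read the first displayed equality off Theorem~\ref{t:b10}, and then, for $E(1)$ itself, to pin down the index set $\mathcal K(F_g)$.

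First I would check the hypotheses of Theorem~\ref{t:b10}. The manifold $M=E(1)\#l\overline{\mathbb CP^2}$ has $b^+(M)=1$, and $\mathcal C_M^{F_g}\neq\emptyset$ because the holomorphic elliptic fibration on $E(1)$ carries a K\"ahler form making the complex curve $F_g$ symplectic, and blowing up $l$ points disjoint from $F_g$ preserves this. So Theorem~\ref{t:b10} applies and gives $\bigsqcup_{K\in\mathcal K(F_g)}\mathcal C_{M,K}^F=\mathcal C_M^{F_g}$ with $\mathcal C_{M,K}^F=\{\alpha\in\mathcal P^F_{M,+}\mid\alpha\cdot E>0\ \forall E\in\mathcal E_K\}$. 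Since $b^+(M)=1$, the cone $\mathcal P^F_M$ is connected (Lemma~2.2, \cite{DL}), so $\mathcal P^F_{M,+}=\mathcal P^F_M$ and $\mathcal C_{M,K}^F=\{\alpha\in\mathcal P^F_M\mid\alpha\cdot E>0\ \forall E\in\mathcal E_K\}$; this is exactly the first displayed equality.

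For the ``in particular'' statement, take $l=0$, so $M=E(1)$; the claim reduces to showing $\mathcal K(F_g)=\{-F\}$. Any symplectic canonical class $K$ of $E(1)$ has $K^2=2\chi+3\sigma=0$, so if also $K\cdot F=0$ then $K$ and the primitive null class $F$ span a totally isotropic subspace; in the signature $(1,9)$ intersection lattice this forces $K=cF$ with $c\in\mathbb Z$, and $c$ is odd because $F$ is characteristic. If such a $cF$ is a symplectic canonical class then, $E(1)$ being rational, $c\,([\omega]\cdot F)=K_\omega\cdot[\omega]<0$ for any symplectic $\omega$; since the symplectic cone lies in the component of $\mathcal P_M$ on which $F$ evaluates positively (that containing the K\"ahler classes), $[\omega]\cdot F>0$ and hence $c<0$. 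For $c\le-3$ one has $\mathcal E_{cF}=\{E\mid cF\cdot E=-1\}=\emptyset$ (as $cF\cdot E\in c\mathbb Z$ and $-1\notin c\mathbb Z$), so the piece $\mathcal C_{M,cF}^F$ attached to $cF$ would be all of $\mathcal P^F_M$ and would therefore overlap the proper subset $\mathcal C_{M,-F}^F$ (here $\mathcal E_{-F}\neq\emptyset$, e.g. the sections, and $\mathcal C_{M,-F}^F$ contains the K\"ahler class), contradicting the disjointness in Theorem~\ref{t:b10}. Hence $c=-1$, which is realized, so $\mathcal K(F_g)=\{-F\}$ and $\mathcal C_{E(1)}^{F_g}=\{\alpha\in\mathcal P^F_M\mid\alpha\cdot E>0\ \forall E\in\mathcal E_{-F}\}$.

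Finally, to obtain $\mathcal C_{E(1)}^{F_g}=\mathcal C_{E(1)}^F$: the inclusion $\subseteq$ is \eqref{e:incl}, while for the reverse I would use that $-F$ is the \emph{unique} symplectic canonical class of $E(1)$ (equivalently, every symplectic form on the rational elliptic surface has canonical class $-F$), which follows from \cite{TJLL} together with Lemma~\ref{l:e1setup}(1). Granting this, any $\alpha\in\mathcal C_{E(1)}^F$ has canonical class $-F$, so by the positivity of symplectic $(-1)$-spheres (\cite{T}) $\alpha\cdot E>0$ for every $E\in\mathcal E_{-F}$, placing $\alpha$ in the set above. The step I expect to be the real obstacle is precisely this uniqueness of the symplectic canonical class of $E(1)$: it is the one ingredient not supplied by Theorem~\ref{t:b10} or by the connectedness and chamber structure of the $b^+=1$ case, which otherwise carry the whole argument.
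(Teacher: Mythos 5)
Your overall strategy is the paper's: the paper derives this theorem in one line from Theorem \ref{t:b10} together with Lemma \ref{l:e1setup}, and your first paragraph (verifying $b^+=1$ and $\mathcal C_M^{F_g}\ne\emptyset$, then quoting Theorem \ref{t:b10}) is exactly that. The problem is in how you fill in the rest.

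The claim on which your ``in particular'' argument rests --- that the symplectic cone of $E(1)$ lies in the component of $\mathcal P_M$ on which $F$ evaluates positively, equivalently (in your last paragraph) that $-F$ is the \emph{unique} symplectic canonical class of $E(1)$ --- is false. By Wall's theorem, $-\mathrm{id}$ on $H^2(E(1))$ is induced by an orientation-preserving diffeomorphism $\phi$, so if $\omega_0$ is K\"ahler with $K_{\omega_0}=-F$ then $\phi^*\omega_0$ is a symplectic form with class $-[\omega_0]$ and canonical class $F$. Hence $\mathcal K(F_g)\supseteq\{F,-F\}$, not $\{-F\}$, and the symplectic cone meets both components of $\mathcal P_M$; this is also visible in the paper's own Section 7.1, where $\mathcal C_{E(1)}=\{\alpha\in\mathcal P_M\mid\alpha\cdot E\ne0\ \forall E\in\mathcal E\}$ is manifestly symmetric under $\alpha\mapsto-\alpha$. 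Your reduction to $K=cF$ with $c$ odd is fine, and your disjointness argument correctly excludes $c\le -3$, but it cannot exclude $c>0$: for those $K$ the representative form has $[\omega]\cdot F<0$, so $\mathcal P^F_{M,+}$ is empty rather than equal to $\mathcal P^F_M$, and no contradiction with disjointness arises.

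This points to the genuine gap. Theorem \ref{t:b10} is stated with $\mathcal P^F_{M,+}$ (the component containing the relevant symplectic class), while Theorem \ref{t:be1} is stated with $\mathcal P^F_M$. Your blanket identification $\mathcal P^F_{M,+}=\mathcal P^F_M$ is only valid for those $K\in\mathcal K(F_g)$ whose representative satisfies $[\omega]\cdot F>0$. For the others (e.g.\ $K=F$, and more generally $-K_{min}\pm E_1\pm\dots\pm E_l$ in the blown-up case) one must show separately that $\{\alpha\in\mathcal P^F_M\mid\alpha\cdot E>0\ \forall E\in\mathcal E_K\}$ is empty --- this is a light cone lemma computation (e.g.\ for $K=F$ one has $\mathcal E_F=-\mathcal E_{-F}$, and requiring $\alpha\cdot E<0$ for all $E\in\mathcal E_{-F}$ together with $\alpha\cdot F>0$ forces $\alpha^2\le0$ via the classes $E_i$ and $H-E_i-E_j$), exactly the step the paper carries out explicitly for $T^2\times S^2\#l\overline{\mathbb CP^2}$ in Section \ref{s:s2t2}. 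This emptiness, not the (false) uniqueness of the canonical class, is what collapses the disjoint union to the single $-F$ piece and what justifies replacing $\mathcal P^F_{M,+}$ by $\mathcal P^F_M$; it is also what you need for the final equality $\mathcal C^{F_g}_{E(1)}=\mathcal C^F_{E(1)}$, since a class $\alpha\in\mathcal C_{E(1)}$ with $\alpha\cdot F>0$ must be shown to lie in the $K=-F$ chamber rather than merely to have ``the'' canonical class $-F$.
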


The following result ensures the classes in $\mathcal C_{E(1)\#l\overline{\mathbb CP^2}}^{F_g}$ can be used in Theorem \ref{t:sym}.

\begin{lemma}\label{l:pfc}(Lemma 2.13, Prop 3.4, \cite{FM1})  Let $\alpha\in \mathcal C_{E(1)\#l\overline{\mathbb CP^2}}^{F}$.  Then $\alpha$ is pfc with respect to any smooth fiber $F_g$ not containing an exceptional curve.

\end{lemma}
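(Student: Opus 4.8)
The plan is to reduce the assertion to the rational elliptic surface $E(1)$ itself, and there to produce, for every class in its relative cone, a symplectic representative that is genuinely compatible with an elliptic (Lefschetz) fibration in a neighbourhood of $F_g$; the pfc structure then survives blowing up away from the fibres near $F_g$. Recall that on $E(1)$ the elliptic fibration is relatively minimal and $K_{min}=-F$.

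First I would reduce to $E(1)$. By Theorem \ref{t:be1}, $\alpha$ is represented by a symplectic form $\omega$ on $M=E(1)\#l\overline{\mathbb CP^2}$ making $F_g$ symplectic. Since $F_g$ contains no exceptional curve it is a generic fibre, so $F\cdot E_i=0$ for each of the $l$ exceptional classes $E_i$ coming from the extra blow-ups; hence the $E_i$ are represented by pairwise disjoint $\omega$-symplectic spheres, each disjoint from $F_g$, and blowing them down gives a symplectic form $\omega_0$ on $E(1)$ for which $F_g$ is still a symplectic fibre. Writing $\alpha_0=[\omega_0]$, we have $\alpha=\alpha_0-\sum_i e_iE_i$ with $e_i>0$, so it suffices to exhibit a pfc-at-$F_g$ representative of $\alpha_0$ on $E(1)$: re-blowing up $E(1)$ at $l$ points lying away from the fibres over a small disk-neighbourhood $U\subset\mathbb{CP}^1$ of $\pi(F_g)$, with weights $e_i$, recovers $\alpha$ and leaves the Lefschetz fibration over $U$ untouched.

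For $E(1)$ itself I would use the cubic-pencil description: a generic pencil of plane cubics blows up to a holomorphic elliptic fibration $E(1)\to\mathbb{CP}^1$ with $12$ nodal fibres, whose global monodromy is (up to the standard model) the word $(t_at_b)^6=1$, where $a,b$ are embedded curves on the generic fibre $F_g$ with $a\cdot b=\pm1$. Taking $U$ a disk containing $\pi(F_g)$ and exactly two consecutive critical values, the restriction is a Lefschetz fibration over $U$ with fibre $F_g$ and two nodal fibres whose vanishing cycles are $a$ and $b$; since $\{a,b\}$ is a basis of $\pi_1(F_g)\cong\mathbb Z^2$, this is precisely the configuration required by Definition \ref{d:pfc}, and the K\"ahler form of $E(1)$ is compatible with the fibration, so the K\"ahler class is pfc at $F_g$. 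To move to the prescribed class $\alpha_0$ I would invoke Lemma 2.13 and Proposition 3.4 of \cite{FM1}: one modifies a fibration-adapted K\"ahler form by a diffeomorphism supported away from the fibres over $U$ (a composition of reflections on $(-2)$-spheres disjoint from those fibres, cf.\ Theorem \ref{t:2}) and by inflations (Lemma \ref{l:infl}) along a fibre and along embedded symplectic spheres representing classes in $\mathcal E_{-F}$, each of which is a section over $U$ and so meets the fibres there positively and transversely; consequently the inflated form stays fibrewise nondegenerate over $U$ while its class reaches $\alpha_0$. The resulting form lies in $\alpha_0$ and is pfc at $F_g$, which is what is needed.

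The main obstacle is the last step: arranging compatibility with the fibration over $U$ \emph{and} the prescribed cohomology class at the same time, rather than merely for the K\"ahler class. This is exactly what the cited results of \cite{FM1} supply; the mechanism works because every surface one inflates along (a fibre, or a section of the fibration over $U$) respects the Lefschetz structure there, so inflation keeps the form fibrewise nondegenerate over $U$ while adjusting the class. The remaining ingredients — the reduction to $E(1)$, the choice of $U$ and of the two nodal fibres, and the fact that their vanishing cycles span $\pi_1(F_g)$ — are routine from the classical cubic-pencil picture of $E(1)$.
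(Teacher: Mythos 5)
The paper does not actually prove this lemma: it is stated as a citation to Lemma 2.13 and Proposition 3.4 of \cite{FM1}, so there is no in-paper argument to compare against. Your sketch is consistent with that, and in the end it takes the same route, since the step you yourself identify as the main obstacle --- producing a form in the prescribed class that remains fibration-compatible over $U$ --- is again delegated to the same two results of \cite{FM1}. What you add on top (the cubic-pencil picture of $E(1)$, a disk $U$ containing two consecutive nodal fibres with vanishing cycles $a,b$ spanning $\pi_1(F_g)$, and the blow-down/blow-up reduction to $E(1)$) is reasonable and matches how the pfc condition of Definition \ref{d:pfc} is meant to be verified.

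One step in your reduction is asserted rather than proved: ``re-blowing up $E(1)$ at $l$ points lying away from the fibres over $U$, with weights $e_i$, recovers $\alpha$.'' This requires $l$ disjoint symplectic embeddings of balls of capacities $e_i$ into $(E(1)\setminus\pi^{-1}(U),\omega_0)$, which is a relative packing statement, not a consequence of $\alpha$ merely being a symplectic class on the blow-up; symplectic balls cannot simply be isotoped off a prescribed open set. The paper's own mechanism for exactly this difficulty is Theorem \ref{t:s2t2}, where a diffeomorphism first concentrates the volume away from the fibred region before the balls are placed; some such argument (or shrinking $U$ together with a packing result in the spirit of \cite{Bi2}) is needed to close your sketch. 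With that supplied, and granting the cited statements of \cite{FM1} as the paper does, your outline is sound.
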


The splitting $M=E(1)\#_FN$ determines on $E(1)$ a symplectic canonical class $K=-F$. There are now two basis in which to study the $E(1)$ classes, the standard basis of Lemma \ref{l:e1setup} and the $E_8\oplus H$ basis that is more naturally associated to the elliptic surface $M$.  Lemma \ref{l:e1setup} allows us to choose a $K$-standard basis and the splitting $E_8\oplus H$ may be assumed to have a $K$-standard generated $E_8$.  Thus we may write the class $\alpha_{E(1)}$ in the following two ways:
\begin{equation}\label{e:e1}
\alpha_{E(1)}=aH^K-\sum_{i=1}^8 b_iE_i^K-(c-g)E=\sum_{i=0}^7k_i D_i+gE+cF
\end{equation}
where 
\[
\{D_i\}_{i=0}^7=\{H^K-E_1^K-E_2^K-E_3^K, E_1^K-E_2^K, E_2^K-E_3^K,....,E_7^K-E_8^K\}
\]
which has $E_8$ as its intersection form and $E$ is a an exceptional class which is a section of $E(1)$.  Call the first the standard form, the second the split form.  For convenience, we will write
\[
\alpha_{E(1)}=(a,b_1,..,b_8,c-g)=(k_0,..,k_7,g,c).
\]
Given the vector $(k_0,..,k_7,g,c)$, the base change is explicitly given by
\begin{equation}\label{e:bc}
\left(\begin{matrix}
k_0 \\
k_1 \\
k_2 \\
k_3 \\
k_4 \\
k_5 \\
k_6 \\
k_7 \\
g \\
c
\end{matrix}\right)\mapsto
\left(\begin{matrix}
a \\
b_1 \\
b_2 \\
b_3 \\
b_4 \\
b_5 \\
b_6 \\
b_7 \\
b_8 \\
c-g
\end{matrix}\right)=
\left(\begin{matrix}
3c+k_0 \\
c+k_0-k_1 \\
c+k_0+k_1-k_2 \\
c+k_0+k_2-k_3 \\
c+k_3-k_4 \\
c+k_4-k_5 \\
c+k_5-k_6 \\
c+k_6-k_7 \\
c+k_7 \\
c-g
\end{matrix}\right).
\end{equation}

Both viewpoints will be used in Theorem \ref{t:e1}, which finally determines the relative symplectic cone for sums with $E(1)$.

\subsection{$E(2)$: Balanced Classes}

Lemma \ref{l:ba} did not include $E(2)$.  This section focuses on balancing classes in $E(2)$.

If $M$ is diffeomorphic to $E(2)$, then there are not enough $2H$-terms to shift the volume off of the rim pairs involved in the sum.  Further, as will be seen in the proof of Theorem \ref{t:e1}, in order for the class to be sum balanced (see Def. \ref{d:sbal}), a condition will be imposed on the relative sizes of $\alpha_0\cdot F$ and $\alpha_0\cdot \Gamma$ which arises from the symplectic cone of $E(1)$.  Both of these issues are dealt with in the following.

For those diffeomorphic to $E(2)$, note that by Theorem \ref{t:lo}, the image of $Diff^+(M)$ in $O$ is $O'$.  Hence the fiber class no longer needs to be preserved and any map of spinor norm one can be used.  In particular, the section $\Gamma$ is a sphere of self-intersection $-2$ and this class can be used to generate a diffeomorphism.  Further, considering instead of the pair $(F,\Gamma)$ the pair $(F, W=\Gamma+F)$, we obtain a new pair that behaves like a rim pair (and has intersection matrix $H$):\begin{enumerate}
\item The class $cF+g\Gamma$ becomes $(c-g)F+gW$.
\item Reflection on $\Gamma$ maps $(c-g)F+gW$ to $gF+(c-g)W$.
\item Lemma \ref{l:rp} can be applied to the pairs $(F,W)$ and $(\mathcal R, T)$.
\end{enumerate}
However, these maps come at the cost of changing the fiber class, hence changing the initial fibration to a new, albeit diffeomorphic, one.

In the following proof, the actual basis elements will not be relevant, only keeping track of the coefficients and which ones correspond to the fiber, the section and the rim-pairs will matter.  Hence, it will be convenient to continue to use the vector notation, where the notation in the vector tracks the location of the fiber, the section, and the $H$-pair even though the two vectors are with respect to two distinct basis.  Hence, the map in Lemma \ref{l:e2} will be written as

\begin{equation}\label{e:2}
\left(\begin{matrix}
w \\
g \\
a \\
b
\end{matrix}\right)
\mapsto
\left(\begin{matrix}
w \\
g-ib \\
a+iw \\
b
\end{matrix}\right)
\end{equation}

%
%
%

\begin{theorem}\label{t:e2}
Let $M$ be diffeomorphic to $E(2)$ with a given fibration having $F_g$ as a generic fiber.  With respect to this fibration, let $\alpha_0\in \mathcal P_M^{F}$ and let $\epsilon>0$.  Then there exists a self-diffeomorphism of $M$ which sends the given fibration to one with generic fiber $\tilde F_g$ and $\alpha_0$ to 
\[
\tilde\alpha_0=\sum_{i=0}^7k_{i,1}D_{i,1}+\sum_{i=0}^7k_{i,2}D_{i,2}+a_{1}\mathcal R_{1}+b_{1}T_{1}+a_{2}\mathcal R_{2}+b_{2}T_{2}+w\tilde F+gW.
\]
 such that
\begin{enumerate}
\item $\tilde\alpha_0\in \mathcal P_M^{\tilde F}$,
\item $\tilde\alpha_0$ is balanced with $0\le a_i\cdot b_i\le \epsilon$,
\item  $0\le |k_{i,j}|<\epsilon$ and 
\item \begin{enumerate}
\item either $0<g<\epsilon$ or
\item $\tilde\alpha_0=w\tilde F+g W$ is a multiple of an integral class.
\end{enumerate}
\end{enumerate}
\begin{proof}
 The intersection form of $M$ with respect to $F$ is given by $2E_8\oplus 2H\oplus \langle F, \Gamma\rangle$.  Write 
 \[
 \alpha_0=\alpha^0_{8,1}+\alpha^0_{8,2}+a^0_{1}\mathcal R_{1}+b^0_{1}T_{1}+a^0_{2}\mathcal R_{2}+b^0_{2}T_{2}+w^0F+g^0W.
 \]
 
 Apply Theorem \ref{t:mine1} first to 
 \[
 \alpha^0_{8,1}+a^0_{1}\mathcal R_{1}+b^0_{1}T_{1}+a^0_{2}\mathcal R_{2}+b^0_{2}T_{2}+w^0F+g^0W\]
  and then to the newly obtained class using the other $E_8$-term,
  \[
 \alpha^0_{8,2}+\tilde a^0_{1}\mathcal R_{1}+\tilde b^0_{1}T_{1}+\tilde a^0_{2}\mathcal R_{2}+\tilde b^0_{2}T_{2}+\tilde w^0F+g^0W,
 \]
  to obtain an equivalent class
  \[
  \alpha_1=\alpha^1_{8,1}+\alpha^1_{8,2}+a^1_{1}\mathcal R_{1}+b^1_{1}T_{1}+a^1_{2}\mathcal R_{2}+b^1_{2}T_{2}+w^1F+gW.
\]

This leads to the following possible configurations in $\alpha_1$:

\[
\begin{array}{|c|c|c|c|c|}
\hline
\alpha^0_{8,1} & \alpha_{8,2}^0 & (a^0_{1},b^0_{1},a^0_{2},b^0_{2}) & &\\\hline
\downarrow&\downarrow&\downarrow&&\\\hline
<\epsilon&0&(0,0,a,b)&\rightarrow &\mbox{ Case 0}\\\hline
<\epsilon&<\epsilon&<\epsilon&\rightarrow &\mbox{ Case 1}\\\hline
0&<\epsilon&<\epsilon&\rightarrow &\mbox{ Case 1}\\\hline
0&0&(0,0,a,b)&\rightarrow &\mbox{ Case 2}\\\hline
\end{array}
\]
 Note that in each case the condition on the $E_8$-coefficients in satisfied.

\noindent{\bf Case 0:}  First, assume that the class $(\alpha^1_{8,1}, 0,0,a,b)$ is a multiple of an integer class.  Then by Lemma \ref{l:ham} it is equivalent to a class $(0,0,0,\tilde a, \tilde b)$ of the same square and divisibility.

Assume now that the class $(\alpha^1_{8,1}, 0,0,a,b)$ is not a multiple of an integer class.  Hence applying Theorem \ref{t:mine1} to this part of $\alpha_1$ will result in the $\alpha_{8,1}^1$ and rim parts being minimized.  

Thus a class $\alpha_2$, equivalent to $\alpha_0$, is obtained which has the following behavior:

\[
\begin{array}{|c|c|c|c|c|}
\hline
\alpha^0_{8,1} & \alpha_{8,2}^0 & (a^0_{1},b^0_{1},a^0_{2},b^0_{2}) & &\\\hline
\downarrow&\downarrow&\downarrow&&\\\hline
<\epsilon&0&<\epsilon&\rightarrow &\mbox{ Case 1}\\\hline
0&0&(0,0,\tilde a,\tilde b)&\rightarrow &\mbox{ Case 2}\\\hline
\end{array}
\]

\noindent{\bf Case 1:}  Consider for simplicity of notation the class \[
\alpha=\alpha_{8,1}+\alpha_{8,2}+a_{1}\mathcal R_{1}+b_{1}T_{1}+a_{2}\mathcal R_{2}+b_{2}T_{2}+wF+gW
\]
where each entry in $\alpha_{8,1}+\alpha_{8,2}$ has magnitude bounded by $\epsilon$  and $0< 2a_i\cdot b_i<\epsilon$.

If one of the $(\mathcal R_i, T_i)$ coefficients is non-vanishing, then Case 1 of the proof of Theorem \ref{t:mine1} shows that at least one coefficient is non-vanishing and bounded by $\epsilon$.  Assume this coefficient is $b_1$.  Moreover, Case 1 also implies that $0\le |a_2|,|b_2|<\epsilon$.  Now use $b_1$ and the map in \eqref{e:2} to obtain a class $\tilde\alpha$ with $\tilde g=g-ib_1>0$ and $2w\tilde g$ smaller than $\epsilon$:
\[
\tilde\alpha=\underbrace{\alpha_{8,1}}_{0\le |k_i|<\epsilon}+\underbrace{\alpha_{8,2}}_{0\le |k_i|<\epsilon}+(a_{1}+iw)\mathcal R_{1}+\underbrace{b_{1}}_{<\epsilon}\tilde T_{1}+\underbrace{a_{2}}_{<\epsilon}\mathcal R_{2}+\underbrace{b_{2}}_{<\epsilon}T_{2}+w\tilde F+\underbrace{\tilde g}_{<\epsilon}W.
\]
Note that $\tilde F=F-iT_1$.

  If $\epsilon$ is chosen $<<\alpha_0^2$ and so that $w>\tilde g$, then this ensures that $\tilde\alpha-w\tilde F-\tilde gW$ has positive square and $w>\tilde g$.    Note that this forces $(a_1+iw)\cdot b_1>\alpha_0^2>0$.  As the fiber class has changed, at this point a diffeomorphism may be applied to the $E_8$ components as needed (see for example Lemma \ref{l:e1setup}.4).  While this preserves the small square of the $E_8$-terms, it may vary the size of individual terms.  Hence use the pair $(a_1+iw,b_1)$ in Lemma \ref{l:e8h} to shrink these again while changing $a_1+iw$ to $\mathfrak a$ and preserving $b_1$, thus producing a class equivalent to $\tilde \alpha$:
  \[
  \underbrace{\tilde\alpha_{8,1}}_{0\le |k_i|<\epsilon}+\underbrace{\tilde\alpha_{8,2}}_{0\le |k_i|<\epsilon}+\mathfrak a\mathcal R_{1}+\underbrace{b_{1}}_{<\epsilon}\tilde T_{1}+\underbrace{a_{2}}_{<\epsilon}\mathcal R_{2}+\underbrace{b_{2}}_{<\epsilon}T_{2}+w\tilde F+\underbrace{\tilde g}_{<\epsilon}W.
  \]

  The assumption on $\epsilon$ again ensures that after the $E_8$-terms have been made small, we still have $\mathfrak a \cdot b>0$.  Finally, use Lemma \ref{l:fi} to shrink the $\mathfrak a$ term to be smaller than $\epsilon$ while increasing the $ w$-term and preserving both $\tilde g$ and $b$.  The class
\[
  \underbrace{\tilde\alpha_{8,1}}_{0\le |k_i|<\epsilon}+\underbrace{\tilde\alpha_{8,2}}_{0\le |k_i|<\epsilon}+\underbrace{ \mathfrak a_1}_{<\epsilon}\mathcal R_{1}+\underbrace{b_{1}}_{<\epsilon}\tilde T_{1}+\underbrace{a_{2}}_{<\epsilon}\mathcal R_{2}+\underbrace{b_{2}}_{<\epsilon}T_{2}+\tilde w\tilde F+\underbrace{\tilde g}_{<\epsilon}W
\]
is equivalent to $\alpha_0$ and now satisfies the claim.
 
If all of the $(\mathcal R_i, T_i)$ coefficients are 0, then use Lemma \ref{l:e8h} applied to $(\alpha_{8,i},0,0)$ for $i=1$ or $i=2$, noting that each term in $\alpha_{8,i}$ is bounded by $\epsilon$, to generate a coefficient for a rim pair that is as needed for the argument above to work.  If all the terms in each $\alpha_{8,i}$ vanish, then $\alpha_0$ is equivalent to a class $wF+gW$. 

Assume that  $\alpha_0$ is equivalent to $w\tilde F+g\tilde W$ and $\frac{w}{g}\not\in \mathbb Q$, then apply Lemma \ref{l:bal} to the class $w\tilde F+g\tilde W+0\mathcal R_1+0T_1$ to obtain an equivalent class 
\[
\tilde w F_1+\tilde g W_1+\tilde a\tilde{\mathcal R_1}+\tilde b\tilde T_1
\]
with $0<\tilde g, |\tilde a_1|, |\tilde b_1|<\epsilon$.  

\noindent{\bf Case 2:}  Assume now that $\alpha_0$ is equivalent to a class $a\mathcal R+bT+wF+gW$.  This case can only occur if $\alpha_0-wF-gW$ is a multiple of an integral class.  Using Lemma \ref{l:fi} or \ref{l:e2}, the coefficient of $\mathcal R$ in $\alpha_0$ can be changed by some multiple of $w$ or $g$.  If now $\alpha_0-wF-gW$ is no longer a multiple of an integral class, then apply the procedure in Case 1 and the claim follows.

This leaves the case that $\alpha_0$ itself is a multiple of an integral class.  Assume that it is integral (i.e. ignore the multiplying factor).  Lemma \ref{l:ham} implies that $\alpha_0$ is equivalent to a class $\mathcal R+bT+wF+gW$ with $b,w,g\in\mathbb Z$ and $w,g>0$.  Then map this class as follows:
\[
\left(\begin{matrix}
w \\
g \\
1 \\
b
\end{matrix}\right)\stackrel{-2-\mbox{reflection}}{\mapsto}
\left(\begin{matrix}
w \\
g \\
b \\
1
\end{matrix}\right)
\stackrel{Eq.\;\ref{e:2}: i=-g+1}{\mapsto}
\left(\begin{matrix}
w \\
1 \\
b+(-g+1)w \\
1
\end{matrix}\right)\mapsto
\]
\[
\stackrel{Lemma\;\ref{l:fi}: i=-b+(g-1)w}{\mapsto}
\left(\begin{matrix}
w+b+(-g+1)w \\
1 \\
0\\
1
\end{matrix}\right)\mapsto 
\left(\begin{matrix}
w+b+(-g+1)w \\
1 \\
0\\
0
\end{matrix}\right)
\]
Note that $w+b+(-g+1)w\ge 1$.  

\end{proof}

\end{theorem}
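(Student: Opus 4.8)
The structural fact to exploit is that $M$ is diffeomorphic to $E(2)$, which has Kodaira dimension $0$ and hence $K_{min}=0$; by Theorem~\ref{t:lo} \emph{every} automorphism of $\overline H_2(M)$ of spinor norm $1$ is induced by a self-diffeomorphism, so we are free to change the fiber class provided we keep track of it. Writing the intersection form as $2E_8\oplus 2H\oplus\langle F,\Gamma\rangle$ and trading the pair $(F,\Gamma)$ for $(F,W=\Gamma+F)$, which spans a copy of $H$, we obtain a third $H$-block that behaves exactly like a rim pair, so the full toolkit of Section~\ref{s:3} becomes available. The plan is to push as much of the volume of $\alpha_0$ as possible into one $H$-block and shrink everything else below $\epsilon$.

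First I would apply Theorem~\ref{t:mine1} to the sub-block $\alpha_{8,1}\oplus(\text{two rim pairs})\oplus\langle F,\Gamma\rangle$, using the \emph{first} $E_8$-summand, and then apply Theorem~\ref{t:mine1} again to the resulting class, now using the \emph{second} $E_8$-summand together with the same rim pairs, so that both $E_8$-coefficient vectors are driven below $\epsilon$ and the volume is concentrated in $wF+gW$ (plus, possibly, one residual rim pair $(a_2,b_2)$ with $0\le 2a_2 b_2<\epsilon$). Depending on whether the $2H$-coefficients are a multiple of an integral class this leaves a short list of configurations. In the degenerate branch, where a leftover rim pair $(0,0,a,b)$ \emph{is} a multiple of an integral class, the whole residual $E_8\oplus 2H$ block is a multiple of an integral class, so Lemma~\ref{l:ham} collapses it to a single $a\mathcal R+bT$; otherwise one more pass through Theorem~\ref{t:mine1} minimizes it as well.

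In the generic branch the class has the shape $\alpha_{8,1}+\alpha_{8,2}+(\text{small rim pairs})+wF+gW$ with at least one surviving coefficient $b_1$ nonzero and bounded by $\epsilon$. I would then: (i) use $b_1$ in the $E(2)$-fiber map \eqref{e:2} (from Lemma~\ref{l:e2}) to replace $g$ by $\tilde g=g-ib_1$ with $0<\tilde g<\epsilon$ and $2w\tilde g<\epsilon$, changing the fiber to $\tilde F=F-iT_1$; (ii) since the fiber has changed, re-apply a diffeomorphism to the $E_8$-components (Lemma~\ref{l:e1setup}(4) and Lemma~\ref{l:e8h}) using the pair $(a_1+iw,b_1)$ to re-shrink the $E_8$-coefficients, changing $a_1+iw$ to some $\mathfrak a$; (iii) apply Lemma~\ref{l:fi} to shrink $\mathfrak a$ below $\epsilon$ while inflating the $w$-term. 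Taking $\epsilon\ll\alpha_0^2$ throughout keeps the square of the block $\alpha_0-wF-gW$ positive and ensures $w>\tilde g$, which also forces each surviving product $a_i b_i$ to stay strictly positive — hence the final class is balanced with $0\le a_i b_i\le\epsilon$ and all four conclusions hold. If all rim and $E_8$ coefficients vanish, so $\alpha_0$ is equivalent to $wF+gW$ with $w/g\notin\mathbb Q$, I would instead run Lemma~\ref{l:bal} on $wF+gW+0\mathcal R_1+0T_1$ to manufacture small $\tilde g,\tilde a_1,\tilde b_1$.

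The main obstacle is the genuinely rigid case where $\alpha_0$ is itself a multiple of an integral class: here $g$ cannot be made small in general, and the point is to land in alternative (4b). I would use Lemma~\ref{l:ham} to normalize $\alpha_0$ (after scaling) to $\mathcal R+bT+wF+gW$ with $b,w,g\in\mathbb Z$ and $w,g>0$, then apply an explicit composition — a reflection on the $-2$-sphere $T-\mathcal R$, then \eqref{e:2} with $i=-g+1$, then Lemma~\ref{l:fi} with $i=-b+(g-1)w$, then one more $-2$-reflection — carrying the class to $\bigl(w+b+(-g+1)w\bigr)\tilde F+\tilde W$, a genuine multiple of an integral class with no rim contribution; checking that this chain behaves as claimed and that $w+b+(-g+1)w\ge 1$ is the only real computation. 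Along the way one also needs the subsidiary observation that if $\alpha_0-wF-gW$ is a multiple of an integral class but $\alpha_0$ is not, a single application of Lemma~\ref{l:fi} or \eqref{e:2} perturbs the $\mathcal R$-coefficient so that the residual block ceases to be a multiple of an integral class, returning to the generic branch.
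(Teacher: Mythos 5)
Your proposal is correct and follows essentially the same route as the paper's proof: the same two-stage application of Theorem~\ref{t:mine1} to each $E_8$-block with the rim pairs and $(F,W)$, the same case split on whether the residual block is a multiple of an integral class, the same Case~1 chain (Eq.~\eqref{e:2} with $b_1$, re-shrinking the $E_8$-terms via Lemma~\ref{l:e8h}, then Lemma~\ref{l:fi}), and the same explicit composition of maps in the integral case landing in alternative (4b). No substantive differences to report.
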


\subsection{$E(1)$ Sums: Sum Balanced Classes and Relative Symplectic Cones}
Consider first manifolds of the form $E(1)\#_{F_g} N$, where it is not avoidable to have the $E(1)$-term.  The goal is to show how to obtain a class $\alpha_{E(1)}\in\mathcal C^F_{E(1)}$ in the splitting of $\alpha-\alpha_{bal}=\alpha_{E(1)}+\alpha_N$.  This will allow us to determine the relative symplectic cone of $E(2)\#l\overline{\mathbb CP^2}$.   

Two situations are distinguished, sums with finitely many exceptional curves and sums with $E(1)\#\overline{\mathbb CP^2}$, where the presence of additional exceptional curves, which also appear in the relative symplectic cone in Theorem \ref{t:be1}, complicates matters.

\begin{theorem} \label{t:e1} Let $M=E(1)\#_{\tilde F_g}N_i$ be an elliptic surface and $F_g$ a generic oriented fiber.  Fix a symplectic canonical class $K\in\mathcal K_F$ on $M$ and let $\mathcal E_K=\{E_1,...,E_l\}$.  Let $\alpha_0\in\mathcal C_{M,K}^{F}$.   Assume $N_i$ is one of the following:\begin{itemize}
\item  An elliptic surface $N_1$ 
\begin{itemize}
\item with exactly $l$ exceptional spheres $E_i$,
\item $\mathcal C^{F}_{N_1,K}\subset \mathcal C_{N_1}^{F_g}$,
\item $E(1)\#_{\tilde F_g}N_i$ generates no rim-tori in the sum or each $\alpha \in \mathcal C^{F}_{N_1,K}$ is pfc at $\tilde F_g$ and 
\item such that the intersection form of $N_1$ contains $2H$.
\end{itemize} 
\item $N_2=E(1)\# l\overline{\mathbb CP^2}$ with $l\ge 0$.  If $l>0$, assume further that each $E_i$ is a $K$-exceptional class in $N_2$.  
\end{itemize}
Then $\alpha_0$ is equivalent to a sum balanced class $\alpha\in\mathcal C_{M,K}^{F}$.  Therefore, 
\[
\mathcal C_{M,K}^{F}\subset \mathcal C_{M}^{F_g}.
\]

\end{theorem}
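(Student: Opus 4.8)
The plan is to reduce the statement to an application of Theorem \ref{t:sym}, so the bulk of the work is to show that $\alpha_0$ is equivalent (via a diffeomorphism fixing a neighborhood of $F_g$) to a class that is sum balanced with respect to the good sum $M=E(1)\#_{\tilde F_g}N_i$. The key obstacle is controlling the $E(1)$-summand: unlike in Theorem \ref{t:38} or Theorem \ref{t:exc}, we cannot simply split $\alpha_F=\alpha_{F_{X}}+\alpha_{F_Y}$ freely, because the relative cone of $E(1)$ (Theorem \ref{t:be1}) imposes the reducedness inequalities $a\ge b_1+b_2+b_3$, $b_1\ge\cdots\ge b_9>0$ in a $K$-standard basis, and these fail for a generic small-volume piece. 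So the strategy is to first move $\alpha_0$ to a balanced class (Corollary \ref{c:bl} when the intersection form has enough $2H$-terms; Theorem \ref{t:e2} in the $E(2)$ case), with all $E_8$-coefficients and rim coefficients pushed below a threshold $\epsilon\ll\alpha_0^2$, and then to arrange that the $E(1)$-component of the split can be chosen to satisfy the reducedness condition.

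Concretely, I would proceed as follows. First, apply Lemma \ref{l:ba}/Corollary \ref{c:bl} (or Theorem \ref{t:e2} for $M$ diffeomorphic to $E(2)$, or Theorem \ref{t:mine1} directly in the remaining cases) to replace $\alpha_0$ by an equivalent balanced class $\alpha\in\mathcal C^F_{M,K}$ in which every $E_8$-block of the $E(1)$-summand has coefficients $0\le k_i<\epsilon$, every rim-pair coefficient has absolute value $<\epsilon$, and—using the $H$-fiber map of Lemma \ref{l:fi} and the $2H$-automorphisms of Lemma \ref{l:rp}—the $\langle F,\Gamma\rangle$-term has the volume concentrated there, i.e.\ $\alpha\cdot F$ and $\alpha\cdot\Gamma$ are comparable to $\alpha^2$. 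All of these diffeomorphisms are identity near $F_g$ by Theorem \ref{t:2} and Lemma \ref{l:e1setup}(4), so the exceptional classes $E_i$ are untouched (choosing the blow-up locus near $F_g$). Second, write $\alpha-\alpha_{bal}=\alpha_{E(1)}+\alpha_N+(c_{E(1)}+c_N)F+g(\Gamma_{E(1)}+\Gamma_N)$ and split $\alpha_F$: assign to the $E(1)$-side a class of the form $\alpha_{E(1)}=\sum_{i=0}^7 k_iD_i+gE+c_{E(1)}F$ and verify, using the base change \eqref{e:bc}, that for $\epsilon$ small and $c_{E(1)}$ chosen sufficiently large (but with $\alpha^2>\alpha_{E(1)}^2>0$, as in Theorem \ref{t:38}), the resulting coefficients $a=3c_{E(1)}+k_0$, $b_i$ satisfy the reducedness inequalities — the $b_i$ differ from $c_{E(1)}$ by quantities bounded by $2\epsilon$, so they are positive and nearly equal, and $a-b_1-b_2-b_3\approx 3c_{E(1)}-3c_{E(1)}=O(\epsilon)>0$ once the $k_i$ are appropriately signed (this is exactly why Theorem \ref{t:mine1}(b) demands $k_0<0$, $k_i\ge 0$ for $i\ge 1$). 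Hence $\alpha_{E(1)}\in\mathcal C^F_{E(1)}=\mathcal C^{F_g}_{E(1)}$ by Theorem \ref{t:be1}, and $\alpha_N\in\mathcal C^{F_g}_{N_i}$ by hypothesis; in the $N_2=E(1)\#l\overline{\mathbb CP^2}$ case one instead splits off the exceptional classes as in Theorem \ref{t:be1}, using $\alpha\cdot E_i>0$ and $\alpha\cdot(\Gamma-E_i)>0$.

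Third, once the class is sum balanced, invoke Lemma \ref{l:pfc} (and its extension to $E(2)$-type summands noted after it) to see that the summand classes are pfc at $\tilde F_g$ when rim tori are present; since the sum is good by assumption, Theorem \ref{t:sym} applies verbatim and yields $\alpha\in\mathcal C^{F_g}_M$, hence $\alpha_0\in\mathcal C^{F_g}_M$ after undoing the (neighborhood-of-$F_g$-fixing) diffeomorphism. This gives $\mathcal C^F_{M,K}\subset\mathcal C^{F_g}_M$. The main obstacle, as indicated, is the bookkeeping in the second step: one must choose $\epsilon$, the splitting of $\alpha_F$, and the signs of the $E_8$-coefficients simultaneously so that all of $\alpha_{E(1)}^2>0$, $\alpha_N^2>0$, the reducedness inequalities on the $E(1)$-side, and (in the $N_2$ case) positivity against all $E\in\mathcal E_K$ hold at once — this is where the carefully arranged conclusions of Theorem \ref{t:mine1} and Theorem \ref{t:e2} (all $E_8$-coefficients nonnegative and tiny, volume concentrated in $\langle F,\Gamma\rangle$) are essential, and in the $E(2)$ case one must separately handle the exceptional situation where $\alpha$ is equivalent to a multiple of an integral class $w\tilde F+gW$ with no usable $H$-slack, treating it by first perturbing $\alpha_0$ to $\alpha_0-\delta F$ and re-inflating along $F$ at the end.
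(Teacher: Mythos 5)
Your overall architecture and your treatment of the main case coincide with the paper's: balance via Corollary \ref{c:bl}/Theorem \ref{t:e2} with the $E_8$-coefficients signed and shrunk as in Theorem \ref{t:mine1}, split off an $E(1)$-piece $\alpha_{E(1)}=\alpha_8+c_1F+gE$ with $0<\alpha_{E(1)}^2<\epsilon$ (equivalently $c_1-g>0$ small), verify reducedness through the base change \eqref{e:bc} using $k_0\le 0$, $k_i\ge 0$, $|k_i|\ll g$, and then feed the sum balanced class into Theorem \ref{t:sym} via Lemma \ref{l:pfc} and the good-sum hypothesis. This is exactly the paper's Case I (and Case II, including your perturb-by-$\delta F$-and-inflate fix for the degenerate situation where $\alpha_0$ is equivalent to a multiple of an integral class $w\tilde F+gW$). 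The one imprecision worth flagging there is that you describe $c_{E(1)}$ as ``sufficiently large,'' whereas the reducedness inequality $a\ge b_{(1)}+b_{(2)}+b_{(3)}$ is insensitive to $c_1$ (the $c$'s cancel, leaving $-2k_0+k_3$ resp.\ $-k_0-k_1+k_3+g$); what $c_1$ must do is stay close to $g$ so that $\alpha_N^2=\alpha^2-\alpha_{E(1)}^2>0$ survives. Also note that $b_9=c_1-g$ is then small, not ``nearly equal'' to the other $b_i\approx c_1$, which is why the paper must separately check the two candidate triples $\{b_1,b_2,b_3\}$ and $\{b_1,b_2,b_9\}$ after reordering.

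The genuine gap is in your handling of $N_2=E(1)\#l\overline{\mathbb CP^2}$ with $l>0$. You propose to push the exceptional classes onto the $N_2$-side and certify $\alpha_{N_2}\in\mathcal C^{F_g}_{N_2}$ by Theorem \ref{t:be1} ``using $\alpha\cdot E_i>0$ and $\alpha\cdot(\Gamma-E_i)>0$.'' But for the rational surface $E(1)\#l\overline{\mathbb CP^2}$ the set $\mathcal E_{K}$ is enormous (all Cremona-type exceptional classes, not just $\pm E_i$ and $\Gamma-E_i$ -- see the remark closing Section \ref{s:s2t2}), and positivity against all of them does not follow from the hypotheses on $\alpha_0$, since for $M=E(2)\#l\overline{\mathbb CP^2}$ the set $\mathcal E_K$ consists of only the $l$ blow-up classes; the classes $\Gamma-E_i$ are not even exceptional in $M$, so $\alpha_0\cdot(\Gamma-E_i)>0$ is not among your assumptions. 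The conditions you cite are in fact those of $T^2\times S^2\#l\overline{\mathbb CP^2}$, and that is precisely the route the paper takes: it applies Theorem \ref{t:e2} to get $0<g=\alpha\cdot\Gamma<\epsilon$ and then invokes Theorem \ref{t:s2t2}, which splits off a trivial $T^2\times S^2$ fiber-sum collar of tiny area, performs all $l$ blow-ups there (where $\mathcal E_K$ is just $\{-\delta_iE_i,\Gamma_{S^2}+\delta_iE_i\}$ and the inequality $\tilde c-e_j>0$ is arranged by the choice of $\epsilon$), and transplants the blow-up balls back by the diffeomorphism. Without this detour, your Case III argument would require an independent proof that the split $E(1)\#l\overline{\mathbb CP^2}$-class is reduced with respect to its full $K$-standard basis, which is not supplied.
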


\begin{proof} The proof will be broken into three cases.  In the first case the key is to show how the sum balance can be achieved on the $E(1)$ side.  In the second case $N=E(1)$, hence the argument from the first case will need to be applied to both sides.  Finally, in the third case $N=E(1)\# l\overline{\mathbb CP^2}$ with $l>0$, the additional exceptional spheres will need to be accounted for.

 {\bf I:} Assume first that $N=N_1$. Given $\epsilon>0$ and using the assumption on $N_1$ that its intersection form contains $2H$, Cor \ref{c:bl} or Lemma \ref{l:ba} provide a class $\alpha=\alpha_{E(1)}+\alpha_N+\alpha_{bal}$ where \begin{enumerate}
\item $\alpha_{E(1)}=\alpha_8+c_1F+gE$ and the $E_8$-component $\alpha_{8}$ (given in the form \eqref{e:e1}) has coefficients $k_i$ which satisfy $|k_i|< \epsilon$, $k_0\le 0$ and $k_i\ge 0$ for $i\ge 1$;
\item $\alpha_N$ contains the $E_i$-contributions of $\alpha$; 
\item  $\alpha_{bal}$ is given by \eqref{e:bal} and contains the balanced rim-components of magnitude smaller than $\epsilon$. 
\end{enumerate}
 
Note that $\alpha_{E(1)}^2=\alpha_8^2+2g(c_1-g)>0$ implies that $c_1-g>0$.  Choose $c_1>g$ so that $0<\alpha_{E(1)}^2<\epsilon$.  The class $\alpha_N$ satisfies:\begin{enumerate}
\item $\alpha_N^2=\alpha^2-\alpha_{E(1)}^2>0$ and 
\item $\alpha_N\cdot E_i=\alpha\cdot E_i>0$,
\end{enumerate}
hence $\alpha_N\in \mathcal C^{F}_{N,K}\subset \mathcal C_{N}^{F_g}$.

In particular, choosing $\epsilon$ small enough, it can be achieved that 
\begin{enumerate}
\item $k_0\le 0$ with $|k_0|\le\frac{g}{4}$.
\item $k_i\ge 0$, $i\ge 1$, with $k_i<\frac{g}{4}$. 
\end{enumerate}
This implies that for $i\ne j$,
\begin{equation}\label{e:kest}
|k_0-k_i|<\frac{g}{2},\; |k_i-k_j|<\frac{g}{4} \mbox{  and  }|k_0+k_i-k_j|\le|k_0|+|k_i-k_j|<\frac{g}{2}.
\end{equation}
Combining $c_1-g>0$ with the estimates \eqref{e:kest} shows that each entry
of the standard form in \eqref{e:bc} is positive.  

Reflection on $E_i^K-E_j^K$ results in the two terms switching places in the standard form, this is also an automorphism of the fiber sum manifold $M$ by Theorem \ref{t:2} and thus may be applied to re-order the terms $b_1,...,b_8$.  This only changes $k_1,...,k_7$ in the split form, leaving both $k_0$ and the $(E,F)$-part unchanged.   Use such reflections on the standard form to sort the terms $b_i$ such that  $b_1\ge b_2\ge...\ge b_8$, determine the corresponding split form and rename the $k_i$ correspondingly.  Thus we may assume the standard form again has the structure of \ref{e:bc} with the central 8 terms ordered by size.  

To ensure that the class is reduced, it remains to show the estimate on the leading term $a$.  However, note that $b_9=c_1-g$ has not been sorted in the above re-ordering.  Thus it is necessary to consider 2 cases, in each calculating $T=a-b_i-b_j-b_k$:\begin{enumerate}
\item $\{b_1,b_2,b_3\}$: $T=-2k_0+k_3>0$ by the choice of signs on $k_0$ and $k_i$.
\item $\{b_1,b_2,b_9\}$: $T=-k_0-k_1+k_3+g$, thus the previous estimates show that $T\ge 0$.

\end{enumerate}

Thus by Lemma \ref{l:e1setup}.2 and Theorem \ref{t:be1},  $\alpha_{E(1)}\in\mathcal C^F_{E(1),K}\subset \mathcal C_{E(1)}^{F_g}$.  As it was shown above that $\alpha_N\in \mathcal C_{N}^{F_g}$, $\alpha$ is sum balanced.

Thus any class $\alpha_0\in\mathcal C_{M,K}^{F}$ is equivalent to a sum balanced class $\alpha\in\mathcal C_{M,K}^{F}$.   If $\alpha_{bal}=0$, then \cite{Go2} shows that  $\alpha\in \mathcal C_{M}^{F_g}$.  Otherwise, apply Theorem \ref{t:sym} (noting Lemma \ref{l:pfc} and the pfc condition on $N$) to the class $\alpha$.  It follows that $\alpha\in \mathcal C_{M}^{F_g}$ and hence also $\alpha_0$.

{\bf II:} Assume now that $N=E(1)$, this means that $M=E(2)$.  Let $\alpha_0\in\mathcal P^F_{E(2)}$.   Note that Lemma \ref{l:pfc} ensures the pfc-condition of Theorem \ref{t:sym} is satisfied.

 The argument for the $E(1)$ side in the previous case must now be applied on both sides.  This argument had two parts:  minimizing the size of the $E_8$ coefficients and ensuring that $c_1-g>0$.  Theorem \ref{t:e2} arranges for these conditions to hold in each $E(1)$ or $\alpha_0\in\mathcal P^F_M$ is equivalent to $w\tilde F+ g\tilde W$.  Note that in both cases the fiber class may have changed.
 
 In the first case, proceed as in I, and by choosing $\epsilon$ small enough, a splitting 
\[
\alpha_{E(1),1}+\alpha_{E(1),2}+\alpha_{bal}
\]
can be achieved with each $\alpha_{E(1),*}\in\mathcal C^F_{E(1),K}$.  Note that condition 4 of Theorem \ref{t:e2} ensures that $c> 2g$ and thus a splitting of $c=c_1+c_2$ with each $c_i>g$ can be achieved.

If $\alpha_0$ is equivalent to $w\tilde F+g\tilde W$ with $\frac{w}{g}\in \mathbb Q$, then choose $0<\delta<<1$ such that $\frac{w-\delta}{g}\not\in \mathbb Q$.  Then the previous case shows that this class lies in $\mathcal C_{E(2)}^{\tilde F_g}$.  Now apply inflation along the fiber to regain the class $w\tilde F+g\tilde W$.

As in Case I., this ensures that $\alpha_0\in \mathcal C_{E(2)}^{F_g}$.  Together with \eqref{e:incl},  $\mathcal C_{E(2)}^{F_g}=\mathcal P^F_{E(2)}$.

{\bf III:} Let  $\alpha_0=\alpha_{E(2)}-\sum_{i=1}^le_iE_i\in \mathcal C_{E(2)\#l\overline{\mathbb CP^2},K}^{F}$.

Apply Theorem \ref{t:e2} to the class $\alpha_{E(2)}$ and assume that Theorem \ref{t:e2}.4a holds for the equivalent class $\tilde\alpha_0$.  Then Theorem \ref{t:s2t2} can be applied and it follows that $\alpha_0\in\mathcal C^{F_g}_{E(2)\#l\overline{\mathbb CP^2}}$.

Further, the same argument can be applied to the case with $\frac{w}{g}\in \mathbb Q$.

%

\end{proof}

\subsection{$E(n,g)$: Relative Symplectic Cones}

This result can be extended to determine the relative symplectic cone for $E(n)$.

\begin{theorem}\label{t:en} Let $M=E(n)\# l\overline{\mathbb CP^2}$, $n>1$, be an elliptic surface and $F_g$ an oriented generic fiber.  Fix a symplectic canonical class $K\in\mathcal K_F$ on $M$ and let $\mathcal E_K=\{E_1,...,E_l\}$.  Then 
\[
\mathcal C^{F}_{M,K}\subset \mathcal C_M^{F_g}.
\]
In particular, 
\[
\mathcal C_M^{F_g}=\bigcup_{K\in\mathcal K_F}\mathcal C_{M,K}^F=\{\alpha\in\mathcal P^F_M\;|\; \alpha\cdot E_i\ne 0\;\forall i=1,..,l\}
\]
and if $l=0$, then $\mathcal C_M^{F_g}=\mathcal P_M^F$.  Moreover, every class in $\mathcal C_M^{F_g}$ is pfc at $F_g$.
\end{theorem}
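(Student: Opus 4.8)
The plan is to prove both assertions by induction on $n$, carrying along the auxiliary statement that every class in $\mathcal C_M^{F_g}$ has a representative which is pfc at every generic fiber not meeting the blow-up locus; this auxiliary statement must be part of the induction because the pfc hypothesis on the summand $N_1$ in Theorem \ref{t:e1} is exactly of this form. First I would fix the set-up: place the $l$ blow-up points in a small ball inside a neighborhood of a fixed generic fiber, so that every $E_i\in\mathcal E_K$ has $E_i\cdot F=0$ and is disjoint from a neighborhood of the gluing fiber used below. By Corollary \ref{c:bl} and Theorem \ref{t:2} all balancing diffeomorphisms of Lemma \ref{l:ba} may be taken to be the identity on such a neighborhood, so at each stage of the induction the exceptional classes remain confined to a single summand of the relevant fiber sum.

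For the base case $n=2$, write $M=E(2)\#l\overline{\mathbb CP^2}=E(1)\#_{\tilde F_g}\bigl(E(1)\#l\overline{\mathbb CP^2}\bigr)$; this is the case $N_2=E(1)\#l\overline{\mathbb CP^2}$ of Theorem \ref{t:e1}, which gives $\mathcal C^F_{M,K}\subset\mathcal C_M^{F_g}$. For the inductive step $n\ge 3$, set $N_1=E(n-1)\#l\overline{\mathbb CP^2}$ and decompose $M=E(1)\#_{\tilde F_g}N_1$. I would then check the hypotheses of the $N_1$-case of Theorem \ref{t:e1}: the $l$ exceptional classes lie in $N_1$ by the placement above; $\mathcal C^F_{N_1,K}\subset\mathcal C_{N_1}^{F_g}$ is the inductive hypothesis; both summands contain an $E(m)$-component so rim tori appear, but the inductive pfc statement, applied with the generic fiber taken to be $\tilde F_g$, shows that every class in $\mathcal C^F_{N_1,K}\subset\mathcal C_{N_1}^{F_g}$ is pfc at $\tilde F_g$; and the intersection form of $N_1$ is $(n-1)E_8\oplus[2(n-2)]H\oplus(F,\Gamma)\oplus l\langle-1\rangle$, which contains $2H$ since $2(n-2)\ge 2$. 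Theorem \ref{t:e1} then gives $\mathcal C^F_{M,K}\subset\mathcal C_M^{F_g}$. In both cases, combining this with \eqref{e:incl}, with Lemma 3.5 of \cite{TJLL} (which gives $\mathcal C_M^{F_g}\subset\bigsqcup_{K\in\mathcal K_F}\mathcal C_{M,K}^F$), and with the description of $\mathcal K_F$ and $\mathcal E_K$ recorded before Definition \ref{d:cmk} yields $\mathcal C_M^{F_g}=\bigcup_{K\in\mathcal K_F}\mathcal C_{M,K}^F=\{\alpha\in\mathcal P^F_M\mid\alpha\cdot E_i\ne0\ \forall i\}$; when $l=0$ the $\mathcal E_K$-conditions are vacuous and $\mathcal C_M^{F_g}=\mathcal P_M^F$.

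It remains to establish the pfc statement for $M$. Given $\alpha\in\mathcal C_M^{F_g}$, the symplectic form $\omega$ constructed above represents $\alpha$, and on the $E(1)$-summand of $M=E(1)\#_{\tilde F_g}N_1$ it is obtained from a form lying in $\mathcal C^F_{E(1)}$, to which Lemma \ref{l:pfc} applies. I would choose $F_g$ to be a generic fiber of the $E(1)$-summand together with an open set $U$ in the base of the elliptic fibration of $E(1)$ which contains the image of $F_g$, avoids the image of $\tilde F_g$, and contains at least two nodal fibers whose vanishing cycles span $\pi_1(F_g)$; such $U$ exists because $E(1)$ has many nodal fibers, and $U$ is then also an open subset of the base of the fibration of $M$. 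The Gompf sum of \cite{Go2} and the modifications of Theorem \ref{t:sym} (Theorem \ref{t:lag} and the inflation of Lemma \ref{l:infl}) alter $\omega$ only on neighborhoods of the gluing fiber and of the rim and vanishing-torus representatives, all of which can be kept disjoint from the part of $M$ lying over $U$; hence over $U$ the form $\omega$ is still compatible with the Lefschetz fibration, so $\alpha$ is pfc at $F_g$, and the construction in fact produces a pfc representative at any generic fiber missing the blow-up locus, closing the induction.

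The step I expect to be the main obstacle is precisely this last bookkeeping: one must arrange, simultaneously and compatibly with all the automorphisms and surgeries used, that a neighborhood of $F_g$ carrying the Lefschetz structure is kept disjoint from the gluing fiber, from the exceptional spheres, and from the supports of every $\omega$-modification coming from Theorem \ref{t:sym}; one must also check that the inductive pfc hypothesis, which is stated at one choice of generic fiber, can be transported to the gluing fiber $\tilde F_g$. The remaining verifications — that the hypotheses of Theorem \ref{t:e1} hold and that the cone description follows — are routine once this set-up is in place.
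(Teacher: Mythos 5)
Your argument is correct and reaches the same conclusion with the same core machinery (Theorem \ref{t:e1}, the pfc bookkeeping for Theorem \ref{t:sym}, and Lemma 3.5 of \cite{TJLL} for the reverse inclusion), but the induction is organized differently from the paper's. You always split off a single $E(1)$, taking $M=E(1)\#_{\tilde F_g}\bigl(E(n-1)\#l\overline{\mathbb CP^2}\bigr)$ and invoking only the $N_1$-case of Theorem \ref{t:e1} at every step, with the pfc statement carried along as an explicit part of the induction; the paper instead inducts by parity on the minimal surfaces first --- $E(2m)=E(2(m-1))\#_{\tilde F_g}E(2)$ via Theorem \ref{t:sym} and $E(2m+1)=E(2m)\#_{\tilde F_g}E(1)$ via Theorem \ref{t:e1} --- and then isolates all the blow-ups in a single summand, splitting $E(n)\#l\overline{\mathbb CP^2}=(E(2)\#l\overline{\mathbb CP^2})\#_{\tilde F_g}E(n-2)$ and finishing with Theorem \ref{t:exc}. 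Your route is more uniform (one decomposition, one theorem throughout) and keeps the exceptional classes in the large summand, at the price of having to verify the $N_1$-hypotheses of Theorem \ref{t:e1} (exactly $l$ exceptional spheres, a $2H$-summand in the intersection form, and pfc at $\tilde F_g$) at every stage; you do check these, and they hold for all $n\ge 3$. The paper's route confines the interaction between exceptional curves and the gluing fiber to the single $E(2)\#l\overline{\mathbb CP^2}$ step. Where you are, if anything, more explicit than the paper is in making the pfc property part of the inductive hypothesis and in flagging that it must be transported to the gluing fiber; the paper handles the same point by observing that the modification locus (the configuration $C$ of two circles and an arc in the base) can be rechosen to avoid any prescribed generic fiber, which is the same bookkeeping you describe.
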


\begin{proof}Consider first $M=E(n)$, $n\ge 2$:

Note the following useful fact:  If $M=X\#_{F_g}Y$ with $\mathcal C_{*}^{F_g}=\mathcal P_{*}^F$ for each summand, then a balanced class on $M$ is also sum balanced with respect to $(X,Y)$. 

Theorem \ref{t:e1} proves
\[
\mathcal C_{E(2)}^{F_g}= \mathcal P^{F}_{E(2)}.
\] 
Moreover, the construction of the symplectic form in the proof of Theorem \ref{t:sym} ensures that the sum form, which is compatible with the fibration structure on $E(2)$, is modified only in an arbitrarily small tubular neighborhood of the rim torus pair.  For the fibration $E(2)\rightarrow S^2$, this means there is a configuration of curves $C\subset S^2$ consisting of two parallel circles, arbitrarily close to each other, and an arc transversally meeting both circles on $S^2$, above which the symplectic form may not be compatible with the fibration. On the two disks defined by this configuration,  every form obtained by a sum balanced class is fibration compatible and each disk contains 5 pairs of singular fibers with matching vanishing cycles.  Hence any class $\alpha\in \mathcal C_{E(2)}^{F_g}$, where $F_g$ is not a fiber above $C$, is pfc at $F_g$.

If $F_g$ is above the configuration $C$, then the configuration $C$ can be chosen differently.  For the circle and line segment producing the torus in the class $\mathcal R+S$, choose a different path for the sphere and a different push off of the rim torus $R$.  For the rim torus $R$, change the size of the neighborhood removed from around $\tilde F_g$ to produce $E(2)=E(1)\#_{\tilde F_g}E(1)$.

As each sum-balanced class is obtained via some automorphism of the elliptic surface, this is in fact true for every class in $\mathcal C_M^{F_g}$.  Hence, for any smooth fiber $F_g$, every class $\alpha\in \mathcal C_{E(2)}^{F_g}$ is pfc at $F_g$.  

This argument is essentially local and applies also to the case $E(2)\#l\overline{\mathbb CP^2}$, however now fibers containing an exceptional sphere are excluded.  Let $M=E(2)\# l\overline{\mathbb CP^2}$ and let  $p_i\in S^2$ be the base points of the fibers in $E(2)$ which contain an exceptional curve.  Denote by $M'$ the restriction of $M$ over $S^2/\{p_1,...,p_l\}$.  The construction of the symplectic form representing a given class in Theorem \ref{t:sym} show that any sum balanced class in $\mathcal C_M^{F_g}$ is represented by a symplectic form compatible with the fibration on $M'$ except over $n-1$ disjoint curves in a $C$-type configuration.  Again, as each sum-balanced class is obtained via some automorphism of the elliptic surface, this is in fact true for every class in $\mathcal C_M^{F_g}$. 

Moreover, this allows for an inductive determination of the relative symplectic cone for $E(n)$, the pfc arguments given for $E(2)$ are similar in the $E(n)$ case:
\[
E(2m)=E(2(m-1))\#_{\tilde F_g}E(2)\stackrel{Thm.\;\ref{t:sym}}{\Rightarrow}\mathcal C_{E(2m)}^{F_g}= \mathcal P^{F}_{E(2m)}
\]
and
\[
E(2m+1)=E(2m)\#_{\tilde F_g}E(1)\stackrel{Thm. \;\ref{t:e1}}{\Rightarrow}\mathcal C_{E(2m+1)}^{F_g}= \mathcal P^{F}_{E(2m+1)},\;\;(m>0).
\]

Assume now that $M=E(n)\#l\overline{\mathbb CP^2}$ with $n>1$ and $l>0$.  Note that the exclusion of $E(1)\#l\overline{\mathbb CP^2}$ means that $M$ has exactly the $l$ exceptional spheres in $\mathcal E_K$.  This allows the useful fact from the minimal case to continue to hold:  If $M=X\#_{F_g}(Y\#l\overline{\mathbb CP^2})$ with $\mathcal C_{X}^{F_g}=\mathcal P_{X}^F$ and
\[
\left\{\left.\alpha\in \mathcal P_{Y}^{F}\;\right|\; \alpha\cdot E_i>0\; \forall i=1,..,l\right\}\subset \mathcal C^{F_g}_{Y},
\]
then a balanced class on $M$ with $\alpha_0\cdot E_i>0$ is also sum balanced with respect to $(X,Y\#l\overline{\mathbb CP^2})$. 

The case $E(2) \#l\overline{\mathbb CP^2}$ is covered by Theorem \ref{t:e1}.  As before, the pfc argument applies to every class.

If $n=2m\ge 4$, then 
\[
E(2m)\#l\overline{\mathbb CP^2}=(E(2)\#l\overline{\mathbb CP^2})\#_{\tilde F_g}E(2(m-1))
\]
and the results for $E(2(m-1))$, $2(m-1)\ge 2$, together with Theorem \ref{t:e1} imply the result by Theorem \ref{t:exc}.  If $n=2m+1\ge 3$, then decompose as
\[
E(2m+1)\#l\overline{\mathbb CP^2}=(E(2)\#l\overline{\mathbb CP^2})\#_{\tilde F_g}E(2m-1).
\]
As before, the results in the minimal case together with Theorem \ref{t:e1} and Theorem \ref{t:exc} imply the result. Combining all these cases, it follows that
\[
\mathcal C^{F_g}_{E(n)\#l\overline{\mathbb CP^2},K}\subset \mathcal C_{E(n)\#l\overline{\mathbb CP^2}}^{F_g}.
\]

\end{proof}

\begin{theorem}\label{t:full} Let $M_I$ be an elliptic surface without multiple fibers and $F_g$ an oriented generic fiber.  Let $M=M_I\# l\overline{\mathbb CP^2}$.  Assume $M$ is not diffeomorphic to $ E(1)\# l\overline{\mathbb CP^2}$, $l\ge 0$.  Fix a symplectic canonical class $K\in\mathcal K_F$ on $M$ and let $\mathcal E_K=\{E_1,...,E_l\}$.  Then 
\[
\mathcal C^{F}_{M,K}\subset \mathcal C_M^{F_g}.
\]
In particular, 
\[
\mathcal C_M^{F_g}=\bigcup_{K\in\mathcal K_F}\mathcal C_{M,K}^F=\{\alpha\in\mathcal P^F_M\;|\; \alpha\cdot E_i\ne 0\;\forall i=1,..,l\}
\]
and if $l=0$, then $\mathcal C_M^{F_g}=\mathcal P_M^F$.
\end{theorem}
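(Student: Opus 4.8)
\section*{Proof proposal}

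The plan is to combine the relative-cone results already established---Theorems \ref{t:b10}, \ref{t:chi0}, \ref{t:be1}, \ref{t:e1} and \ref{t:en}---according to the smooth classification in Theorem \ref{t:class}. First I would reduce to the case that $M_I$ is relatively minimal: any fiber exceptional curves of $M_I$ can be blown down and absorbed into the connected sum with copies of $\overline{\mathbb CP^2}$, and if the resulting relatively minimal model is $E(1)$ then $M$ is diffeomorphic to $E(1)\#l'\overline{\mathbb CP^2}$, which is excluded. Hence we may assume $M_I$ is relatively minimal, has no multiple fibers, and is not diffeomorphic to $E(1)$. By Theorem \ref{t:class}, $M_I$ is then a $T^2$-bundle $M_b$ over a closed surface $\Sigma_g$ when $\chi(M_I)=0$, and is diffeomorphic to $E(n,g)=E(n)\#_{F_g}(T^2\times\Sigma_g)$ (with $E(n,0)=E(n)$), $(n,g)\neq(1,0)$, when $\chi(M_I)>0$.

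Next I would dispose of the cases covered by a single earlier result. If $\chi(M_I)=0$ and $b^+(M_b)=1$, then $b^+(M)=1$ and Theorem \ref{t:b10} gives $\bigsqcup_{K\in\mathcal K(F_g)}\mathcal C_{M,K}^F=\mathcal C_M^{F_g}$; since $\mathcal P^F_M$ is connected for $b^+=1$ we have $\mathcal P^F_{M,+}=\mathcal P^F_M$, and since only one of $\pm K$ yields a nonempty cone this becomes $\bigcup_{K\in\mathcal K_F}\mathcal C_{M,K}^F=\mathcal C_M^{F_g}$, the constraints from exceptional classes other than the $E_i$ being automatic as in \cite{DL}, \cite{TJLL} and Section \ref{s:s2t2}. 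If $\chi(M_I)=0$ and $b^+(M_b)>1$, Theorem \ref{t:chi0} (together with its $l=0$ minimal-bundle case $\mathcal C_{M_b}^{F_g}=\mathcal P_{M_b}^F$) gives the statement directly. If $\chi(M_I)>0$ and $g=0$, then $M_I=E(n)$ with $n\geq 2$ and Theorem \ref{t:en} applies directly.

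The remaining case, $M_I=E(n,g)$ with $g\geq 1$, I would handle via a good fiber sum with a $T^2\times\Sigma_g$ summand. Since $T^2\times\Sigma_g$ has intersection form $(2g+1)H$, hence no $E_8$ and so no $E(m)$ summand, the fiber sums below produce no new rim tori; and since $b^+(T^2\times\Sigma_g)=2g+1>1$, Theorem \ref{t:chi0} gives $\mathcal C^{F_g}_{T^2\times\Sigma_g}=\mathcal P^F_{T^2\times\Sigma_g}$ and, for $l>0$, $\mathcal C^F_{(T^2\times\Sigma_g)\#l\overline{\mathbb CP^2},K}\subset\mathcal C^{F_g}_{(T^2\times\Sigma_g)\#l\overline{\mathbb CP^2}}$. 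For $n\geq 2$ I would write $M=(E(n)\#l\overline{\mathbb CP^2})\#_{\tilde F_g}(T^2\times\Sigma_g)$, placing all blow-ups on the $E(n)$ side so that the exceptional curves lie off $\tilde F_g$; then Theorem \ref{t:en} supplies hypothesis (3) of Theorem \ref{t:exc} for the $E(n)$-summand while the $T^2\times\Sigma_g$-summand contributes no exceptional class, so Theorem \ref{t:exc} gives $\mathcal C^F_{M,K}\subset\mathcal C^{F_g}_M$. For $n=1$ I would instead write $M=E(1)\#_{\tilde F_g}N_1$ with $N_1=(T^2\times\Sigma_g)\#l\overline{\mathbb CP^2}$ and verify the hypotheses of Theorem \ref{t:e1}: $N_1$ has exactly $l$ exceptional spheres since $T^2\times\Sigma_g$ is minimal, $\mathcal C^F_{N_1,K}\subset\mathcal C^{F_g}_{N_1}$ by the preceding sentence, the sum generates no rim tori, and the intersection form of $N_1$ contains $2H$ as $g\geq 1$; hence Theorem \ref{t:e1} gives $\mathcal C^F_{M,K}\subset\mathcal C^{F_g}_M$.

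Finally I would assemble the characterization in the cases not already stated in the invoked theorem, namely $M_I=E(n,g)$ with $g\geq 1$. There $\mathcal C^F_{M,K}\subset\mathcal C^{F_g}_M$ was just proved, and the reverse inclusion $\mathcal C_M^{F_g}\subset\bigsqcup_{K\in\mathcal K(F_g)}\mathcal C_{M,K}^F$ holds by \eqref{e:incl} and the remark following Definition \ref{d:cmk} (Lemma 3.5 of \cite{TJLL}), so $\mathcal C_M^{F_g}=\bigsqcup_{K\in\mathcal K_F}\mathcal C_{M,K}^F$. Since $\kappa(M)=1\geq 0$ here, $\mathcal E_K=\{-\delta_1E_1,\dots,-\delta_lE_l\}$ with $\delta_i=K\cdot E_i\in\{\pm 1\}$, so for $\alpha\in\mathcal P^F_M$ one has $\alpha\in\mathcal C^F_{M,K}$ for some admissible $K$ exactly when for each $i$ the sign $\delta_i$ can be chosen with $\delta_i(\alpha\cdot E_i)<0$, i.e. exactly when $\alpha\cdot E_i\neq 0$ for all $i$. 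This yields $\mathcal C_M^{F_g}=\{\alpha\in\mathcal P^F_M\mid\alpha\cdot E_i\neq 0\ \forall i\}$, equal to $\mathcal P^F_M$ when $l=0$; the same reduction applied to the already-stated cases makes the conclusion uniform. The step requiring most care is the $g\geq 1$ case: confirming that summing with $T^2\times\Sigma_g$ introduces no rim components and that the blow-ups can be arranged so that the hypotheses of Theorem \ref{t:exc} (for $n\geq 2$) and Theorem \ref{t:e1} (for $n=1$) are met verbatim; the rest is bookkeeping over the classification.
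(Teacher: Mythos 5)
Your proposal is correct and follows essentially the same route as the paper: reduce via the smooth classification to $E(n,g)\#l\overline{\mathbb CP^2}$ (the $\chi=0$ and $g=0$ cases being covered by Theorems \ref{t:b10}, \ref{t:chi0} and \ref{t:en}), then for $g\ge 1$ split off the $T^2\times\Sigma_g$ summand and apply Theorem \ref{t:exc} for $n\ge 2$ and Theorem \ref{t:e1} for $n=1$, exactly as in the paper's proof. Your explicit checks (no rim tori from the $T^2\times\Sigma_g$ sum, the $2H$ hypothesis for Theorem \ref{t:e1}, and the reduction of the union over $\mathcal K_F$ to the condition $\alpha\cdot E_i\ne 0$) fill in details the paper leaves implicit.
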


\begin{proof}Consider the manifolds $E(n,g)\#l\overline{\mathbb CP^2}$.  If $g=0$, then this result is just Theorem \ref{t:en}.  Hence assume that $g>0$.

{$\bf E(n,g)$:} Consider  the decomposition
\[
E(n,g)=E(n)\#_{\tilde F_g}(T^2\times \Sigma_g).
\]

The case $n=1$ is the content of Theorem \ref{t:e1}.  For $n\ge 2$, the claim then follows from Cor. \ref{c:chi0}, Theorem \ref{t:en} and applying Theorem \ref{t:38}:
\[
\mathcal C^{F_g}_{E(n,g)}=\mathcal P^{F_g}_{E(n,g)}.
\]
Moreover, Cor. \ref{c:chi0}, Lemma \ref{l:pfc} and Theorem \ref{t:en} imply that every class $\alpha\in \mathcal C^{F_g}_{E(n,g)}$ is pfc at $F_g$.

{\bf $\bf E(n,g)\#l\overline{\mathbb CP^2}$:} If $n\ge 2$, consider the decomposition
\[
E(n,g)\#l\overline{\mathbb CP^2}=(E(n)\# l\overline{\mathbb CP^2})\#_{\tilde F_g}(T^2\times \Sigma_g).
\]
Then Cor. \ref{c:chi0}, Theorem \ref{t:en} and Theorem \ref{t:exc} imply the result, again no rim-tori are involved.  As before, every class is pfc at $F_g$.
For the case with $n=1$, decompose as
\[
E(1,g)\#l\overline{\mathbb CP^2}=E(1)\#_{\tilde F_g}[(T^2\times \Sigma_g)\#l\overline{\mathbb CP^2}]
\]
and use Cor. \ref{c:chi0}, Theorem \ref{t:be1} and Theorem \ref{t:e1} to obtain the result.

Combining these results, it follows that for the cases considered 
\[
\mathcal C^{F_g}_{E(n,g)\#_{\tilde F_g}l\overline{\mathbb CP^2},K}\subset \mathcal C_{E(n,g)\#_{\tilde F_g}l\overline{\mathbb CP^2}}^{F_g}.
\]

\end{proof}

\subsection{$E(n,g,p_1,..,p_k)$: Relative Symplectic Cones}  To deal with multiple fibers, we use the rational blow-down introduced in \cite{FS}.  In the following we briefly describe this procedure in the setting that we need, its connection to logarithmic transforms and Symington's construction of symplectic forms on the rational blowdown.

The rational blow-down can be performed as follows (Section 8.5, \cite{GS}):  Consider a fishtail fiber in $M$.   Blow-up the nodal point to produce a $-4$-sphere which is intersected by the exceptional sphere in two points.  Blow-up one of the two intersection points to produce a $-5$-sphere intersecting the new exceptional sphere in a single point.  Blow up $r-2$ times to obtain a collection of spheres $\{S_{-r-2}, S_{-2}, ..., S_{-2}\}$, each $S_j$ of self-intersection $j$ and such that each sphere in the list intersects only its neighbors transversally in a single point.  This linear tree can be plumbed to produce a  manifold $C_r$ with boundary the Lens space $L(r^2,r-1)$.  This Lens space bounds a rational homology ball $B_r$, hence removing $C_r$ and gluing in $B_r$ produces a new manifold $M_r$, the rational blow-down of $M$.

A key reason for introducing this construction is the following result:

\begin{theorem}(Thm. 3.1, \cite{FS}; Thm. 8.5.9, \cite{GS})\label{t:ratbl}  The manifold $M_r$ obtained as described above is diffeomorphic to the logarithmic transform $M(r)$ of multiplicity $r$ along a smooth fiber torus lying in the neighborhood of the fishtail fiber.
\end{theorem}

In particular, this diffeomorphism is identity outside the neighborhood of the fishtail fiber.  Hence the fibration is only affected in a neighborhood of a fishtail fiber, any submanifolds not intersecting this fiber can be assumed to be preserved.

Thus, to understand symplectic forms on $E(n,g,p_1,..,p_k)$ is equivalent to understanding symplectic forms on $E(n,g)_{p_1,..,p_k}$.

Thus far, this operation has been performed in the smooth category.  Symington \cite{Sym} proved that this can be extended to the symplectic setting. 

\begin{theorem}\cite{Sym}\label{t:symin} If $M$ is symplectic and each sphere $S_i\subset M$ of a configuration $C_r$ is a symplectic submanifold, then the rational blowdown $M_r$ is also symplectic.
\end{theorem}

The key to the proof is the construction of a number of model neighborhoods in the symplectic setting.  In particular, the symplectic analog of $B_r$ is constructed out of the complement of two spheres in a rational ruled surface $B_r'$ and an additional collar piece $A_r$.  $B_r'$ is obtained by removing from the rational ruled surface $F_{r-1}$ two spheres in the classes $[\Sigma_{r-1}]+F$ and $[\Sigma_{-r+1}]$ of area $\sigma_{r+1}$ and $\sigma_{-r+1}$ respectively.  In order to ensure that all gluings can occur symplectically, these two areas need to satisfy
\begin{equation}\label{e:est}
(r-1)\sigma_{r+1}+\sigma_{-r+1}<(r-1)a_1+ \sum_{i=2}^{r-1}\tau_ia_i
\end{equation}
where $a_i$ are the areas of each of the spheres $S_i$ and $\tau_i$ are positive constants defined in \cite{Sym}.  This strict inequality explains why the volume of $M_r$ is determined by the volume of $M$ and the areas of the $S_i$, in particular why 
\[
vol(M_r)>vol(M)+vol(B'_r).
\]

Combining these results now allows us to prove the following Theorem.

\begin{theorem}\label{t:mult}Let $M$ be an elliptic surface with $\chi(M)>0$ and $F_g$ an oriented generic fiber.  Assume $M$ is not diffeomorphic to $ E(1)\# l\overline{\mathbb CP^2}$, $l\ge 0$.  Fix a symplectic canonical class $K\in\mathcal K_F$ on $M(r)$ and let $\mathcal E_K=\{E_1,...,E_l\}$.  Then 
\[
\mathcal C^{F}_{M(r),K}\subset \mathcal C_{M(r)}^{F_g}.
\]
In particular, 
\[
\mathcal C_{M(r)}^{F_g}=\bigcup_{K\in\mathcal K_F}\mathcal C_{M(r),K}^F=\{\alpha\in\mathcal P^F_{M(r)}\;|\; \alpha\cdot E_i\ne 0\;\forall i=1,..,l\}
\]
and if $l=0$, then $\mathcal C_{M(r)}^{F_g}=\mathcal P_{M(r)}^F$.
\end{theorem}

\begin{proof}
Every elliptic surface $M$ with $\chi(M)>0$ contains a fishtail fiber.  As we exclude $E(1)$,  the exceptional spheres in $M$ can be assumed to be away from the fishtail fiber  Note that any exceptional curves used to produce $C_r$ do meet this fishtail fiber, but they disappear in the rational blowdown.

Now apply Theorem \ref{t:ratbl} to switch from $M(r)$ to $M_r$.  The diffeomorphism only affects a neighborhood of the fishtail fiber, it leaves the exceptional spheres $E_1,..,E_l$ unchanged, as well as the $E_8$ and $H$ components of the intersection form except possibly for the pair $(F,\Gamma)$.  However, the fiber class exists outside of the neighborhood, so it also stays preserved, and with it the class $\Gamma$, as the intersection form is preserved by the diffeomorphism.  Thus, to prove the Theorem, the same cones must be shown to be symplectic in $M_r$.

Let $\alpha_r\in \mathcal C^{F}_{M_r,K}$.  Then $\alpha_r=cF+g\Gamma+B$ with $g>0$ and $B\cdot E_i>0$.  The class $\alpha(\beta)=\beta F+g\Gamma+B\in H^2(M,\mathbb R)$ therefore also satisfies $\alpha(\beta)\cdot E_i>0$.  If $\beta$ is chosen so that $\alpha(\beta)^2>0$, then Theorem \ref{t:full} ensures that $\alpha(\beta)\in \mathcal C^{F_g}_M$.  In particular, there exists an interval $I=(\tau,+\infty)\subset \mathbb R$, such that for all $\beta\in I$, $\alpha(\beta)\in \mathcal C^{F_g}_M$.

If $r=2$, then $C_2$ is a single sphere of self-intersection $-4$ and the rational blow-down is a symplectic sum with $\mathbb CP^2$ along a sphere in the class $2H$.  As described by Gompf \cite{Go2}, the symplectic form on $M_2$ has volume given by the sum of the volumes of the forms on $M$ and $\mathbb CP^2$.  Thus choose $\beta=c-\epsilon$ and the symplectic form on $\mathbb CP^2$ in the class $2\epsilon gH$.  Then the symplectic sum will produce a form in the class of $\alpha_r$, hence  $\alpha_r\in  \mathcal C_{M(r)}^{F_g}$.

Now let $r>2$.  For each $\beta\in I$, Symington's construction builds a symplectic form on $M_r$.  The volume of this form is determined by the area of the fishtail fiber and the size of the blow-ups used to create $C_r$.  More precisely, choose $\sigma_{r+1}=a_1=g$, small blow-ups $a_i$ ($i\ge 2$) and $\sigma_{-r+1}$ small to ensure that Eq. \ref{e:est} holds.  Then the volume of $B_r'$ can be chosen to be small.  Moreover, the symplectic structure $\alpha(\beta)_r$ arising from $\alpha(\beta)$ by Symington's construction will then have volume only slightly larger than the volume of $\alpha(\beta)$.  Therefore, choosing the terms appropriately, it can be achieved that 
\begin{enumerate}
\item $\alpha(\beta)_r=\alpha_r$ and
\item $\alpha(\beta)^2>0$.
\end{enumerate}
Hence $\alpha_r\in  \mathcal C_{M(r)}^{F_g}$.

\end{proof}

\section{Symplectic Cones for Elliptic Surfaces} \label{s:cone}

The full symplectic cone for elliptic surfaces with $b^+=1$ is known to be (Theorem 4, \cite{TJLL})
\[
\mathcal C_M=\{\alpha\in\mathcal P_M\;|\;\alpha\cdot E\ne 0\;\forall E\in\mathcal E\}.
\]
If $M=E(0)$, then the canonical class is unique, hence this is the union of the relative symplectic cones.

If $M=E(1)\#l\overline{\mathbb CP^2}$, there are many exceptional curves and many symplectic canonical classes, hence the symplectic cone is much larger than the relative symplectic cones for a fixed fiber.  

For elliptic surfaces with $b^+\ge 2$, the relative symplectic cones determined in the previous section suffice to determine the full symplectic cone of the underlying smooth manifold.  In the following these cones are described with a primary emphasis on elliptic surfaces with $\chi(M)>0$. 

\subsection{$\bf b^+\ge 2$ $\bf \kappa(M)=0$ and $\bf \chi(M)>0$:}

If $M=E(2)$ is minimal, then $\mathcal C_M=\mathcal P_M$ \cite{TJL1}.  The non-minimal case is given by the following lemma.

\begin{lemma} \label{l:k3} 
\[
\mathcal C_{E(2)\#l\overline{CP^2}}=\{\alpha\in\mathcal P_{E(2)\#l\overline{CP^2}}\;|\;\alpha\cdot E_i\ne 0\;\forall i\in\{1,..,l\}\}
\]
\end{lemma}

\begin{proof} Clearly the inclusion $\subset$ holds.

Let $M=E(2)\#l\overline{\mathbb CP^2}$.  Let $\alpha\in\mathcal P_{E(2)\#l\overline{\mathbb CP^2}}$, write as 
\[
\alpha=(\alpha_{8,1},\alpha_{8,2},\underbrace{c,g}_{\langle F,\Gamma\rangle =H},\underbrace{a_1,b_1}_{=H},\underbrace{a_2,b_2}_{=H},e_1,...,e_l).
\] 
If $\alpha\cdot F=g\ne 0$, then Theorem \ref{t:e1} shows that $\alpha\in\mathcal C_{M}$ for some choice of orientation on $F_g$. Suppose that $\alpha\cdot F=0$, then the $(F,\Gamma)$-terms do not contribute to the volume of $\alpha$.  As $\alpha^2>0$ and $\alpha_{8,1}^2+\alpha_{8,2}^2\le 0$, it follows that at least one of $(a_i,b_i)$ must have positive square, suppose $(a_1,b_1)$.  Then Lemma \ref{l:e2} maps the fiber $F$ to $\tilde F=F+T_1$ and the class $\alpha$ to 
\[
\tilde\alpha=(\alpha_{8,1},\alpha_{8,2},c+b_1,b_1,a_1-c,b_1,a_2,b_2,e_1,...,e_l)
\]
(written with respect to the new fiber $\tilde F$) such that $\alpha$ and $\tilde \alpha$ are in the same orbit under $O'$.  Now $\tilde\alpha\cdot \tilde F=b_1\ne 0$ and hence $\tilde\alpha$ lies in the symplectic cone.
\end{proof}

%

\subsection{$\bf b^+\ge 2$, $\bf \kappa(M)=1$ and $\bf \chi(M)>0$:}  If $M$ is (relatively) minimal with $b^+>1$, then there is a unique symplectic canonical class, up to a sign, see \cite{Br} and \cite{FM}.  Hence 
\[
\mathcal C_M=\mathcal P_M^F\cup \mathcal P_M^{-F}.
\]
In the non-minimal case, $M$ has exactly $l$ exceptional spheres.  These exceptional curves provide further restrictions, however Theorems \ref{t:full} and \ref{t:mult} imply
\[
\mathcal C_M=\mathcal C_M^{F_g}\cup \mathcal C_M^{-F_g}=\{\alpha\in\mathcal P_{M}\;|\;\alpha\cdot F\ne 0,\;\;\alpha\cdot E_i\ne 0\;\forall i\in\{1,..,l\}\}
\]
%

\end{document}